\begin{document}
\baselineskip = 16pt
\newcommand \C{{\mathbb C}}
\newcommand \ZZ {{\mathbb Z}}
\newcommand \NN {{\mathbb N}}
\newcommand \QQ{{\mathbb Q}}
\newcommand \RR {{\mathbb R}}
\newcommand \PR {{\mathbb P}}
\newcommand \AF {{\mathbb A}}
\newcommand \GG {{\mathbb G}}
\newcommand \bcA {{\mathscr A}}
\newcommand \bcC {{\mathscr C}}
\newcommand \bcD {{\mathscr D}}
\newcommand \bcF {{\mathscr F}}
\newcommand \bcG {{\mathscr G}}
\newcommand \bcH {{\mathscr H}}
\newcommand \bcM {{\mathscr M}}
\newcommand \bcJ {{\mathscr J}}
\newcommand \bcL {{\mathscr L}}
\newcommand \bcO {{\mathscr O}}
\newcommand \bcP {{\mathscr P}}
\newcommand \bcQ {{\mathscr Q}}
\newcommand \bcR {{\mathscr R}}
\newcommand \bcS {{\mathscr S}}
\newcommand \bcV {{\mathscr V}}
\newcommand \bcW {{\mathscr W}}
\newcommand \bcX {{\mathscr X}}
\newcommand \bcY {{\mathscr Y}}
\newcommand \bcZ {{\mathscr Z}}
\newcommand \goa {{\mathfrak a}}
\newcommand \gob {{\mathfrak b}}
\newcommand \goc {{\mathfrak c}}
\newcommand \gom {{\mathfrak m}}
\newcommand \gon {{\mathfrak n}}
\newcommand \gop {{\mathfrak p}}
\newcommand \goq {{\mathfrak q}}
\newcommand \goQ {{\mathfrak Q}}
\newcommand \goP {{\mathfrak P}}
\newcommand \goM {{\mathfrak M}}
\newcommand \goN {{\mathfrak N}}
\newcommand \uno {{\mathbbm 1}}
\newcommand \Le {{\mathbbm L}}
\newcommand \Spec {{\rm {Spec}}}
\newcommand \Gr {{\rm {Gr}}}
\newcommand \Pic {{\rm {Pic}}}
\newcommand \Jac {{{J}}}
\newcommand \Alb {{\rm {Alb}}}
\newcommand \Corr {{Corr}}
\newcommand \Chow {{\mathscr C}}
\newcommand \Sym {{\rm {Sym}}}
\newcommand \Prym {{\rm {Prym}}}
\newcommand \cha {{\rm {char}}}
\newcommand \eff {{\rm {eff}}}
\newcommand \tr {{\rm {tr}}}
\newcommand \Tr {{\rm {Tr}}}
\newcommand \pr {{\rm {pr}}}
\newcommand \ev {{\it {ev}}}
\newcommand \cl {{\rm {cl}}}
\newcommand \interior {{\rm {Int}}}
\newcommand \sep {{\rm {sep}}}
\newcommand \td {{\rm {tdeg}}}
\newcommand \alg {{\rm {alg}}}
\newcommand \im {{\rm im}}
\newcommand \gr {{\rm {gr}}}
\newcommand \op {{\rm op}}
\newcommand \Hom {{\rm Hom}}
\newcommand \Hilb {{\rm Hilb}}
\newcommand \Sch {{\mathscr S\! }{\it ch}}
\newcommand \cHilb {{\mathscr H\! }{\it ilb}}
\newcommand \cHom {{\mathscr H\! }{\it om}}
\newcommand \colim {{{\rm colim}\, }} 
\newcommand \End {{\rm {End}}}
\newcommand \coker {{\rm {coker}}}
\newcommand \id {{\rm {id}}}
\newcommand \van {{\rm {van}}}
\newcommand \spc {{\rm {sp}}}
\newcommand \Ob {{\rm Ob}}
\newcommand \Aut {{\rm Aut}}
\newcommand \cor {{\rm {cor}}}
\newcommand \Cor {{\it {Corr}}}
\newcommand \res {{\rm {res}}}
\newcommand \red {{\rm{red}}}
\newcommand \Gal {{\rm {Gal}}}
\newcommand \PGL {{\rm {PGL}}}
\newcommand \Bl {{\rm {Bl}}}
\newcommand \Sing {{\rm {Sing}}}
\newcommand \spn {{\rm {span}}}
\newcommand \Nm {{\rm {Nm}}}
\newcommand \inv {{\rm {inv}}}
\newcommand \codim {{\rm {codim}}}
\newcommand \Div{{\rm{Div}}}
\newcommand \sg {{\Sigma }}
\newcommand \DM {{\sf DM}}
\newcommand \Gm {{{\mathbb G}_{\rm m}}}
\newcommand \tame {\rm {tame }}
\newcommand \znak {{\natural }}
\newcommand \lra {\longrightarrow}
\newcommand \hra {\hookrightarrow}
\newcommand \rra {\rightrightarrows}
\newcommand \ord {{\rm {ord}}}
\newcommand \Rat {{\mathscr Rat}}
\newcommand \rd {{\rm {red}}}
\newcommand \bSpec {{\bf {Spec}}}
\newcommand \Proj {{\rm {Proj}}}
\newcommand \pdiv {{\rm {div}}}
\newcommand \CH {{\it {CH}}}
\newcommand \wt {\widetilde }
\newcommand \ac {\acute }
\newcommand \ch {\check }
\newcommand \ol {\overline }
\newcommand \Th {\Theta}
\newcommand \cAb {{\mathscr A\! }{\it b}}

\newenvironment{pf}{\par\noindent{\em Proof}.}{\hfill\framebox(6,6)
\par\medskip}

\newtheorem{theorem}[subsection]{Theorem}
\newtheorem{conjecture}[subsection]{Conjecture}
\newtheorem{proposition}[subsection]{Proposition}
\newtheorem{lemma}[subsection]{Lemma}
\newtheorem{remark}[subsection]{Remark}
\newtheorem{remarks}[subsection]{Remarks}
\newtheorem{definition}[subsection]{Definition}
\newtheorem{corollary}[subsection]{Corollary}
\newtheorem{example}[subsection]{Example}
\newtheorem{examples}[subsection]{examples}

\title{On the kernel of the push-forward homomorphism between Chow groups.}
\author{ Kalyan Banerjee, Jaya NN  Iyer}
\address {Indian Statistical Institute, Bangalore Centre, Bangalore 560059, India}
\address{The Institute of Mathematical Sciences, CIT
Campus, Taramani, Chennai 600113, India}
\email{kalyanb$_{-}$vs@isibang.ac.in}
\email{jniyer@imsc.res.in}

\footnotetext{Mathematics Classification Number: 14C25, 14D05, 14D20,
 14D21}
\footnotetext{Keywords: Pushforward homomorphism, Theta divisor, Jacobian varieties, Chow groups, higher Chow groups.}

\begin{abstract}
In this paper, we prove that the kernel of the push-forward homomorphism on $d$-cycles modulo rational equivalence, induced by the closed embedding of an ample divisor linearly equivalent to some multiple of the theta divisor inside the Jacobian variety $J(C)$ is trivial.  Here $C$ is a smooth projective curve of genus $g$.
\end{abstract}

\maketitle

\setcounter{tocdepth}{1}
\tableofcontents

\section{Introduction}
In this paper, we investigate the kernel of the push-forward homomorphism  on Chow groups induced by the closed embedding of a smooth irreducible ample divisor $D$ inside a smooth projective variety $X$, over the field of complex numbers. Assume the dimension $dim(X)=n$ and let $j: D\hookrightarrow X$ be the closed embedding . This question is motivated by the following results and conjecture.
When Chow groups are replaced by the singular homology of a smooth projective variety over $\C$, the (dual of) Lefschetz hyperplane theorem gives an isomorphism of the pushforward map:
$$
j_*:H_k(D,\ZZ) \hookrightarrow H_k(X,\ZZ)
$$
for $k \,<\,n$, and surjectivity when $k=n$..

M. Nori \cite[Conjecture 7.2.5]{Nori} gave improved bounds on the degrees of singular cohomology for the standard Lefschetz restriction maps, and when $D$ is a very general ample divisor of large degree on $X$.
Furthermore, he conjectured the following  on the restriction maps on the rational Chow groups:
\begin{conjecture}\label{Nor}
Suppose $D$ is a very general smooth ample divisor on $X$, of sufficiently large degree. Then the restriction map:
$$
j^*:CH^p(X)\otimes \QQ \rightarrow CH^p(D)\otimes \QQ
$$
is an isomorphism, for $p<n$ and is injective, for $p=n$.
\end{conjecture}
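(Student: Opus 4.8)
The plan is to prove Conjecture \ref{Nor} by spreading cycles over the universal family of large-degree divisors and controlling the resulting classes with Nori's connectivity theorem. Fix a very ample $H$ on $X$, take $m\gg 0$, and let $B\subset|mH|$ be the open locus of smooth members, with universal family $\pi:\bcD\to B$ realized as a smooth divisor $\bcD\subset X\times B$; write $\pr_X:X\times B\to X$ for the projection. The phrase ``very general'' means that the asserted property of $D=\bcD_b$ holds for all $b$ outside a countable union of proper closed subsets of $B$, which amounts to its holding for the generic fibre $\bcD_\eta$ over the function field $\C(B)$; a spreading-out and specialization argument then transports conclusions between $\bcD_\eta$ and the very general closed members. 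Thus it suffices to analyze the restriction map $j_\eta^*:CH^p(X)\otimes\QQ\to CH^p(\bcD_\eta)\otimes\QQ$ and to descend.

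For injectivity, valid in the range $p\le n$, let $\alpha\in CH^p(X)\otimes\QQ$ satisfy $j_\eta^*\alpha=0$. Restricting $\pr_X^*\alpha$ to $\bcD$ gives a class that vanishes on the generic fibre, so by the localization sequence for $\pi$ it is supported on $\pi^{-1}(W)$ for some proper closed $W\subsetneq B$; in other words it has positive coniveau in the $B$-direction. First I would record that, by Nori's connectivity theorem, for $m\gg 0$ the relative groups $H^k(X\times B,\bcD;\QQ)$ vanish throughout the range containing $k=2p$, so that $\pr_X^*$ induces an injection $H^{2p}(X,\QQ)\hookrightarrow H^{2p}(\bcD,\QQ)$. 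Then I would feed the coniveau information into a Bloch--Srinivas decomposition of the diagonal of $\bcD_\eta$ over $\C(B)$: the connectivity bound together with the Lefschetz theorem makes the Chow groups of the very general member as small as possible, which is precisely the input needed to split the diagonal and thereby promote the cohomological control of $\alpha$ to a statement in $CH^p$, forcing $\alpha=0$.

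For surjectivity, needed only for $p<n$, take $Z\in CH^p(\bcD_\eta)\otimes\QQ$ and spread it to a codimension-$p$ cycle on $\bcD$, after a generically finite base change that is harmless modulo $\QQ$. Since $2p<2(n-1)$, Nori connectivity identifies $H^{2p}(\bcD,\QQ)$ with $H^{2p}(X\times B,\QQ)$ in this range, and the K\"unneth decomposition together with the Lefschetz analysis of $H^{2p}(X\times B,\QQ)$ shows that the class of $Z$ is the restriction of $\pr_X^*\beta$ for a Hodge class $\beta\in H^{2p}(X,\QQ)$. The remaining task is to realize $\beta$ by an algebraic cycle on $X$ and to improve the equality $j_\eta^*\beta=Z$ from cohomology to rational equivalence; here I would again apply the diagonal decomposition of $\bcD_\eta$ to lift the cohomological identity to an identity in $CH^p$.

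The hard part is the passage from cohomology to Chow groups. Nori connectivity and the Lefschetz theorem pin everything down at the level of singular cohomology and Hodge structures, but converting ``cohomologically trivial'' into ``rationally trivial'', and ``cohomologically liftable'' into ``liftable in $CH^p$'', is exactly the depth of the conjecture and, in full generality, demands Bloch--Beilinson-type input. My strategy concentrates this difficulty into a single assertion: a sufficiently strong decomposition of the diagonal of the very general member $\bcD_\eta$ of large degree $m$. Establishing that decomposition, presumably by an inductive spreading argument that feeds Nori connectivity back into the Bloch--Srinivas construction, is where the principal effort must be spent.
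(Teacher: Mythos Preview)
The statement you are attempting to prove is recorded in the paper as a \emph{conjecture} (attributed to Nori), not as a theorem; the paper provides no proof of it and uses it only as motivation for the much more specific results on theta divisors that follow. So there is nothing to compare your argument against on the paper's side.

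As for the argument itself, the gap is exactly where you locate it, and it is fatal rather than merely ``hard''. Nori connectivity controls the relative \emph{cohomology} $H^k(X\times B,\bcD;\QQ)$; it does not by itself make the Chow groups of the very general member small, so the sentence ``the connectivity bound together with the Lefschetz theorem makes the Chow groups of the very general member as small as possible, which is precisely the input needed to split the diagonal'' is circular. A Bloch--Srinivas decomposition of the diagonal of $\bcD_\eta$ of the strength you need would already imply the Chow-theoretic statement you are trying to prove; one cannot extract it from cohomological connectivity alone without something of Bloch--Beilinson type, which is itself conjectural. In the surjectivity half there is a second independent gap: you produce a Hodge class $\beta\in H^{2p}(X,\QQ)$ and then ``realize $\beta$ by an algebraic cycle on $X$'', which is the Hodge conjecture for $X$. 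In short, your outline correctly isolates the cohomological skeleton of the problem, but the two steps that would turn it into a proof---upgrading cohomological triviality/liftability to rational equivalence, and algebraicity of the Hodge class $\beta$---are open in this generality, which is why the statement remains a conjecture.
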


More generally, we have (see \cite[Conjecture 1.5]{Paranjape}):
\begin{conjecture}\label{Par}
Let $D$ be a smooth ample divisor in $X$. Then the restriction map for the inclusion of $D$ in $X$:
$$
CH^p(X)\otimes \QQ \rightarrow CH^p(D)\otimes \QQ
$$
is an isomorphism, for $p\leq \frac{dim{Y}-1}{2}$.
\end{conjecture}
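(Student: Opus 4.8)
The plan is to deduce the cycle-theoretic statement of Conjecture~\ref{Par} from its cohomological counterpart, the Lefschetz hyperplane theorem recalled above, by a spreading-out argument in the spirit of Bloch--Srinivas and Paranjape, with Nori's connectivity theorem as the main cohomological input. Write $n=\dim X$, so that $j\colon D\hookrightarrow X$ has $\dim D=n-1$. First I would split the claim into injectivity and surjectivity of $j^*\colon CH^p(X)\otimes\QQ\to CH^p(D)\otimes\QQ$, and record the projection and self-intersection formulas
$$
j_*\,j^*(\alpha)=\alpha\cdot[D],\qquad j^*\,j_*(\beta)=c_1(N_{D/X})\cdot\beta,
$$
valid for $\alpha\in CH^p(X)\otimes\QQ$ and $\beta\in CH^{p-1}(D)\otimes\QQ$, where $N_{D/X}=\bcO_X(D)|_D$. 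These reduce both halves to controlling the Gysin map $j_*$ together with cup product by the ample class $[D]$, precisely the operators whose cohomological behaviour is dictated by hard Lefschetz.

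Next I would spread $D$ in a family. After replacing $D$ by a sufficiently large multiple and choosing a base $B\subset|mD|$, form the universal smooth divisor $\mathcal D\subset X\times B$ with projections $p\colon\mathcal D\to X$ and $q\colon\mathcal D\to B$, whose fibre over $b\in B$ is the corresponding divisor $\mathcal D_b$. Nori's connectivity theorem \cite{Nori} then gives vanishing of the relative cohomology $H^i(X\times B,\mathcal D;\QQ)$ in a range of degrees that widens with $m$, encoding the Lefschetz isomorphisms uniformly across the family. The gain from passing to the total space is that this connectivity is a statement about a single smooth projective variety, accessible to Hodge-theoretic and cycle-class methods.

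The transfer to Chow groups then proceeds by spreading cycles. Given $\alpha\in CH^p(X)\otimes\QQ$ with $j^*\alpha=0$ on a very general fibre, one pulls $\alpha$ back to $\mathcal D$, uses the connectivity vanishing to pin down its cohomology class, and applies a decomposition-of-the-diagonal argument over the generic point of $B$, in the manner of Bloch--Srinivas, to show that the spread cycle is rationally trivial; restricting to a general fibre yields $\alpha=0$, hence injectivity. A dual construction, producing a class on $X$ that restricts to a prescribed cycle on $D$, gives surjectivity in the same range. The numerical bound $p\leq\frac{n-1}{2}$ emerges from matching the degree range in which Nori connectivity holds against the codimension of the diagonal component being moved.

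The hard part, and the reason Conjecture~\ref{Par} is still open in general, is the descent from a \emph{very general} member of $|mD|$ to the \emph{fixed, arbitrary} smooth ample divisor $D$ of the statement. Nori's theorem supplies connectivity only for very general divisors of large degree, while rational Chow groups obey no continuity or specialization principle that would carry the conclusion from the very general fibre to a chosen one; concretely, there is no hard Lefschetz theorem for rational Chow groups, so neither injectivity of $\,\cdot[D]$ nor the Gysin surjectivity invoked above is available unconditionally. Bridging this gap appears to require the Lefschetz-type standard conjectures or Kimura--O'Sullivan finite-dimensionality of the motives in play. It is exactly this obstruction that the present paper sidesteps, in the dual Gysin direction, by specialising to $X=J(C)$ with $D$ linearly equivalent to a multiple of $\Theta$: the rich geometry of the theta divisor---the Abel--Jacobi maps, the symmetric powers of $C$, and the explicit correspondences they furnish---provides the cycle-theoretic input that the general case lacks.
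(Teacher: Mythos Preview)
The statement you are addressing is Conjecture~\ref{Par}, not a theorem, and the paper offers no proof of it. It is quoted from \cite{Paranjape} purely as motivation: the authors immediately pass to the dual (pushforward) question and spend the paper proving special cases of \emph{that}, for $D$ a theta divisor or a particular smooth member of $|n\Theta|$ on a Jacobian. There is nothing in the paper to compare your proposal against.

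Your write-up is in any case not a proof but an outline of why the conjecture remains open: you correctly set up the Bloch--Srinivas/Nori spreading argument, and then explicitly identify the gap---Nori connectivity only controls very general divisors of large degree, and there is no specialization principle for rational Chow groups that lets you descend to a fixed smooth $D$. That diagnosis is accurate and well put, but it means your proposal does not establish the statement. If the intent was to explain the obstruction rather than to prove the conjecture, say so plainly at the outset; as written it reads like a proof sketch that collapses in its final paragraph.
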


It seems reasonable to pose the following dual of above Chow Lefschetz questions:
\begin{conjecture}
The pushforward map on the rational Chow groups, for a very general ample divisor $D\subset X$ of sufficiently large degree:
$$
j_*:CH_k(D)\otimes \QQ \rightarrow CH_k(X)\otimes \QQ
$$
is injective, whenever $k>0$.
\end{conjecture}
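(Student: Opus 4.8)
The plan is to translate the injectivity of $j_*$ into a surjectivity statement for higher Chow groups via Bloch's localization sequence, and then to attack that statement for a very general $D$ of large degree by a Nori-type connectivity argument. Write $U = X\setminus D$ for the open complement and let $u\colon U \hra X$ denote the (open) inclusion. Bloch's localization long exact sequence for the closed embedding $j\colon D\hra X$ reads, in the dimension indexing,
$$
CH_k(X,1) \xrightarrow{\,u^*\,} CH_k(U,1) \xrightarrow{\,\partial\,} CH_k(D) \xrightarrow{\,j_*\,} CH_k(X) \lra CH_k(U) \lra 0 ,
$$
where $CH_k(-,1)$ denotes Bloch's higher Chow group. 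Exactness at $CH_k(D)$ gives $\ker(j_*) = \im(\partial)$, and exactness at $CH_k(U,1)$ gives $\im(\partial) = \coker(u^*)$. Since $-\otimes\QQ$ is exact, the conjecture is therefore equivalent to the assertion that
$$
u^*\colon CH_k(X,1)\otimes\QQ \lra CH_k(U,1)\otimes\QQ
$$
is surjective for all $k>0$. This is the form I would try to prove; note that the hypothesis $k>0$ is essential, since $CH_0(D)\otimes\QQ\to CH_0(X)\otimes\QQ$ is non-injective in general (already when $D$ is a surface with non-representable $CH_0$), forcing $u^*$ to fail surjectivity at $k=0$.

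To exploit the hypotheses \emph{very general} and $\deg D$ \emph{large}, I would spread the problem over the parameter space. Let $B\subset |mL|$ be the Zariski-open locus of smooth members, let $\mathcal{D}\subset X\times B$ be the universal divisor with projections $p\colon \mathcal{D}\to X$ and $q\colon \mathcal{D}\to B$, and put $\mathcal{U} = (X\times B)\setminus \mathcal{D}$. By a standard spreading-out and decomposition-of-the-diagonal argument in the style of Bloch--Srinivas and Nori, it suffices to prove the corresponding surjectivity for the generic member $D_\eta$ over the function field $\C(B)$, which in turn is governed by the relative motivic cohomology of the family $\mathcal{U}\to B$. The engine here is Nori's connectivity theorem: for $m\gg 0$ the restriction maps $H^i(X\times B;\QQ)\to H^i(\mathcal{D};\QQ)$ are isomorphisms in a range of degrees that grows with $m$, equivalently the variable (primitive) cohomology of the fibres $D_b$ vanishes in that range. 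I would feed this vanishing into the diagonal-decomposition machine to force any class in $CH_k(U,1)\otimes\QQ$ to be, generically, supported on a proper closed subset, and hence to be pulled back from $CH_k(X,1)\otimes\QQ$ via $u^*$.

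The hard part is the passage from cohomology to higher Chow groups. Nori's theorem controls Betti/de Rham cohomology (or the underlying mixed Hodge structures), whereas the quantity I must bound is $CH_k(U,1)\otimes\QQ$ for an open variety; the regulator from higher Chow to Deligne cohomology is in general far from injective, so cohomological connectivity does not by itself yield surjectivity of $u^*$. Bridging this gap requires a connectivity statement at the level of motivic cohomology, or a decomposition of the diagonal of $\mathcal{U}$ refined enough to detect $CH_k(-,1)$, and making such an argument work uniformly across the whole range $0<k<\dim D$ is exactly where the difficulty concentrates — this is why the statement is posed as a conjecture. I would expect the argument to close cleanly only in the extreme ranges (for instance $k=\dim D-1$, where $CH_k(D)\otimes\QQ$ is spanned by $[D]$ and injectivity is immediate, or small $k$, where $CH_k(U,1)$ is more accessible), the middle range demanding either new input on higher Chow groups of affine-type complements or a strengthening of Nori connectivity to motivic cohomology. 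The special case $X=\Jac(C)$ with $D\in|m\Th|$, treated below, supports the conjecture and provides a template: there the rich geometry of the Jacobian — the norm map $\Nm$, Abel--Jacobi, and the action of correspondences coming from $C$ — substitutes for the unavailable general connectivity input.
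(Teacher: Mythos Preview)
The statement you are attempting is explicitly labeled a \emph{conjecture} in the paper; no proof is offered. It is posed as a dual analogue of the Nori and Paranjape Chow--Lefschetz conjectures, and the paper's actual contribution is to verify it only in the special situation $X=J(C)$ with $D$ either the theta divisor or a particular smooth member of $|n\Theta|$ arising from a ramified Galois cover of $C$. There is therefore no proof in the paper to compare your attempt against.

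Your proposal is, appropriately, a strategy rather than a proof, and you acknowledge this yourself. The reformulation via Bloch's localization sequence is correct: injectivity of $j_*$ on $CH_k\otimes\QQ$ is equivalent to surjectivity of $u^*\colon CH_k(X,1)\otimes\QQ\to CH_k(U,1)\otimes\QQ$. The subsequent plan---spread out over the linear system, invoke Nori connectivity, feed the vanishing into a diagonal decomposition---is a reasonable heuristic, but the step you label ``the hard part'' is the entire content of the conjecture. Nori's theorem controls singular cohomology, and no known mechanism carries that control to $CH_k(U,1)$ in the required range; the regulator is far from injective, and Bloch--Srinivas style decompositions do not, with current technology, constrain higher Chow groups of affine complements in this way. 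So the gap you identify is genuine and is exactly why the statement is left open. Note too that the paper's special cases are not handled by anything resembling your outline: they rely instead on Collino's correspondences on symmetric powers of curves and a direct diagram chase through the localization sequence for $\Sym^{\tilde g-1}\tilde C\hookrightarrow\Sym^{\tilde g}\tilde C$, with no appeal to connectivity or spreading out.

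One small slip: for $k=\dim D-1$ the group $CH_k(D)\otimes\QQ$ is $\Pic(D)\otimes\QQ$, not spanned by a single class; you presumably meant $k=\dim D$, where injectivity is immediate since $j_*[D]$ is the class of an ample divisor on $X$.
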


Similarly, we could pose the dual version of Conjecture \ref{Par}.

Our  aim is to verify these conjectures when $D$ is the theta divisor on the Jacobian of a smooth projective curve, and \textit{special} smooth divisors linearly equivalent to a multiple of the theta divisor.

Let $C$ be a smooth projective curve of genus $g$ and let $\Th$ denote a theta divisor inside the Jacobian $J(C)$ of $C$.

Suppose $\pi:\tilde{C}\rightarrow C$ is a ramified finite Galois covering of degree $n$, for $n\geq 1$. Let $G$ denote the Galois group such that $C=\tilde{C}/G$. Then the induced morphism $\pi^*: J(C)\rightarrow J(\tilde{C})$ is injective. Furthermore, for a suitable translate $\Theta_{\tilde{C}}$ of the theta divisor in $J(\tilde{C})$, the restriction on $J(C)$ is a smooth, irreducible, ample divisor $H_C$  which is linearly equivalent to $n\Th$.

Then we show the following.

\begin{theorem}\label{mainT}
Suppose $C$ is a smooth projective curve of genus $g$ and $H_C$ be as mentioned above. Let $j_C$ denote the closed embedding of  $H_C$  inside $J(C)$. Then the kernel of the push-forward homomorphism $j_{C*}:\CH_k(H_C)\otimes {{\mathbb Q}}\rightarrow \CH_k(J(C))\otimes {{\mathbb Q}}$ is trivial, for $k\geq 0$.
\end{theorem}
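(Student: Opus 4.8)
The plan is to work with Bloch's higher Chow groups and a localisation argument. Write $U:=J(C)\setminus H_C$, an affine variety of dimension $g$, with open immersion $i:U\hookrightarrow J(C)$ and closed immersion $j_C:H_C\hookrightarrow J(C)$. The localisation long exact sequence for higher Chow groups, tensored with $\QQ$, contains the exact strand
$$\CH_k(J(C),1)\otimes\QQ\xrightarrow{\ i^{*}\ }\CH_k(U,1)\otimes\QQ\xrightarrow{\ \partial\ }\CH_k(H_C)\otimes\QQ\xrightarrow{\ j_{C*}\ }\CH_k(J(C))\otimes\QQ,$$
so that $\ker(j_{C*})=\operatorname{im}(\partial)\cong\bigl(\CH_k(U,1)\otimes\QQ\bigr)/\operatorname{im}(i^{*})$. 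Hence the Theorem is equivalent to the surjectivity of the restriction $i^{*}:\CH_k(J(C),1)\otimes\QQ\to\CH_k(U,1)\otimes\QQ$, equivalently to the vanishing of $\partial$; this is the form in which I would attack it.

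The second step is to describe $U$ by Abel--Jacobi theory. Since $H_C$ is irreducible and ample it is non-torsion in $\Pic(J(C))$, so $\bcO^{*}(U)=\C^{*}$: there are no non-constant regular units on $U$, and consequently the decomposable part of $\CH_k(U,1)\otimes\QQ$ --- the span of external products of a $k$-cycle by a unit --- already lies in $\operatorname{im}(i^{*})$, because $\CH_k(J(C))\otimes\QQ\twoheadrightarrow\CH_k(U)\otimes\QQ$ by the ordinary-Chow part of localisation. So it suffices to prove that $\CH_k(U,1)\otimes\QQ$ is spanned by its decomposable part. Realising $J(C)$ as $\Pic^{g}(C)$ and a theta divisor as $W_{g-1}+p$, there is an isomorphism of $J(C)\setminus(W_{g-1}+p)$ with the open locus of $\Sym^{g}(C\setminus\{p\})$ over which the Abel--Jacobi map to $\Pic^{g}(C)$ is an isomorphism, the complement being the preimage of the Brill--Noether locus $W^{1}_{g}$; the divisor $H_C=\Theta_{\tilde C}\cap\pi^{*}J(C)$ is handled the same way after pulling back to $J(\tilde C)$, where $\Theta_{\tilde C}$ may be taken to be $W_{\tilde g-1}(\tilde C)+\tilde p$. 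This presents $U$, after a finite stratification, by open subsets of symmetric products of affine curves, of their Brill--Noether subloci $W^{r}_{d}$, and of the slices of those cut out by $H_C$.

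The crux is then to propagate the required surjectivity down this stratification by iterated localisation. On a stratum which is an open subset of $\Sym^{d}(C^{\circ})$ with $C^{\circ}$ a smooth affine curve, one uses the identification $\CH_a(\Sym^{d}(C^{\circ}),m)\otimes\QQ=\bigl(\CH_a((C^{\circ})^{d},m)\otimes\QQ\bigr)^{S_{d}}$, valid for all $a,m$, together with a Künneth / coniveau spectral sequence analysis, to reduce the input to the higher Chow groups of the affine curve $C^{\circ}$ itself; there $\CH_{1}(C^{\circ},1)=0$ and $\CH_{0}(C^{\circ},1)=\bcO^{*}(C^{\circ})$ is generated by units, hence purely decomposable, so no new indecomposable classes arise. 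The genuine obstacle lies precisely here: one must control the higher Chow groups, with $\QQ$-coefficients, of the exceptional Brill--Noether loci $W^{r}_{d}$ --- which are singular in general and whose geometry depends on the gonality of $C$ --- and of their slices by $H_C$; and one must check that the classes produced stratum by stratum actually descend all the way to $\CH_k(J(C),1)$ and not merely to an intermediate open subscheme. The Künneth-type step for symmetric powers of higher-genus affine curves also needs care, since higher Chow groups do not satisfy a Künneth formula in general. The ramified Galois covering $\pi:\tilde C\to C$ enters only to supply the particular ample divisor $H_C$; once $U$ and its Abel--Jacobi stratification are set up, the argument is uniform in $n$, the case $n=1$ being a translate of the theta divisor itself.
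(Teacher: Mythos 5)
Your opening reduction is correct, and it is also where the paper starts: by the localization sequence for higher Chow groups, $\ker(j_{C*})$ on $\CH_k$ is the image of the boundary map from $\CH_k(U,1)_{\QQ}$, so the theorem is indeed equivalent to surjectivity of $\CH_k(J(C),1)_{\QQ}\to\CH_k(U,1)_{\QQ}$. From there, however, your plan diverges into an attempt to actually compute $\CH_k(U,1)_{\QQ}$ --- proving it is spanned by decomposable classes via an Abel--Jacobi stratification of $U$ by Brill--Noether loci and a K\"unneth-type analysis of symmetric powers of affine curves --- and you yourself flag that every load-bearing step is unproven: higher Chow groups do not satisfy a K\"unneth formula, the groups $\CH_*(W^{r}_{d},\ast)$ are not controlled, and decomposability of $\CH_k(U,1)_{\QQ}$ is a very strong assertion for which no mechanism is offered (note that $\bcO^*(U)=\C^*$ makes the decomposable part essentially trivial, so you would be claiming the whole group is close to trivial modulo the image of $i^*$, which is exactly the theorem restated). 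As it stands this is a research programme rather than a proof; the gap is not a missing detail but the entire core of the argument.

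The paper takes a route that avoids computing $\CH_k(U,1)$ altogether, and the tool you are missing is Collino's theorem that $\CH_*(\Sym^m C)\to\CH_*(\Sym^n C)$ is injective, extended in the final section to higher Chow groups and to open complements $\Sym^m C\setminus i^{-1}(W)\to\Sym^n C\setminus W$ (Corollary \ref{openCollino}). Concretely, the Galois cover $\pi:\tilde C\to C$ is not merely a device to produce $H_C$: the proof pulls everything back to $\Sym^{\tilde g-1}\tilde C$ and $\Sym^{\tilde g}\tilde C$, takes the scheme-theoretic preimages $H_C'$ and $J(C)'$ of $H_C$ and $J(C)$ there, writes the two localization sequences for these closed subschemes, passes to $G$-invariants (which turns the ambient symmetric powers of $\tilde C$ into symmetric powers of $C$, hence projective bundles over $J(C)$, so the projective bundle formula identifies the outer higher Chow terms of the two rows), and then runs a four-lemma-style diagram chase in which the injectivity of the vertical maps on the open strata and on the ambient symmetric powers is supplied by the Collino-type results; finally one descends from $H_C'\to J(C)'$ to $H_C\to J(C)$ by the birationality argument of Theorem \ref{prop2}. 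Nothing about units on $U$, decomposability, or Brill--Noether loci is needed. To salvage your approach you would have to either prove the surjectivity of $i^*$ directly --- which is essentially as hard as the theorem itself --- or import an injectivity statement of Collino type that lets you compare two localization sequences instead of computing either one.
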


 Note that $H_C$ is a special ample divisor in the linear system $|n\Theta|$,  since it is restriction of $\Theta$ on $J(\tilde{C})$. It will be interesting to look at the situation when $H_C$ is a general smooth divisor in $|n\Theta|$. However, as pointed out by C. Voisin, we  cannot expect injectivity on $CH_0(H_C)_{{\mathbb Q}}\rightarrow CH_0(J(C))_{{\mathbb Q}}$, when $H_C$ is very general.

The proof utilises localization sequence of higher Chow groups, applied to $G$-fixed subvarieties of the Jacobian of $\tilde{C}$. An application of a theorem of Collino \cite[Theorem 1]{Collino}, which shows the injectivity, for $k$-cycles on inclusions of lower dimensional symmetric product $Sym^m(C)$ of a curve $C$ inside $Sym^n(C)$, for $m\leq n$, gives us the required injectivity. In the final section \S \ref{Collino},  we also extend his theorem for the pushforward map on higher Chow groups of symmetric powers of a curve, and for any of its open subset. This is crucial in proof of Theorem \ref{mainT}.

Instead of rational Chow groups, the  group $A_k(X)$ of  algebraically trivial $k$-cycles on $X$ modulo rational equivalence can be considered. A weaker problem is posed in the following.

 See  \cite[Exercise 1, Chapter 10]{Voisin}. Let $S$ be a smooth, connected, complex, projective, algebraic surface embedded inside some $\PR^N$. Let $C_t$ be a general smooth hyperplane section of $S$ and $j_t$ be the closed embedding of $C_t$ into $S$. Let $H_t$ be the Hodge structure
$$\ker(j_{t*}:H^1(C_t,\ZZ)\to H^3(S,\ZZ))$$
which is induced from the Hodge structure of $H^3(S,\ZZ)$.
and $A_t$ be the abelian variety corresponding to $H_t$ inside $J(C_t)$. Then
the kernel of the push-forward homomorphism $j_{t*}$ from $A_0(C_t)$ to $A_0(S)$ is a countable union of translates of an abelian subvariety $A_{0,t}$ of $A_t$.
For a very general $C_t$, the abelian variety $A_{0,t}$ is either $0$ or $A_t$.

If the albanese map from $A_0(S)$ to $\Alb(S)$ is not an isomorphism, then for a very general $t$, the kernel of the push-forward homomorphism $j_{t*}$ is countable.

In \cite{BG}, the first author and V. Guletskii  extended the problem to even dimensional smooth projective varieties over any algebraically closed, uncountable ground field. For a smooth cubic fourfold in $\PR^5$, and for a very general hyperplane section on it, it is shown that the kernel of the push-forward homomorphism on algebraically trivial algebraic $1$-cycles modulo rational equivalence, induced by the closed embedding of the hyperplane section into the cubic fourfold, is countable.

We consider  the Jacobian variety $J(C)$ of a smooth projective curve $C$ and the associated Kummer variety $K(J(C)):=\frac{J(C)}{<i>}$. Here $i$ is the inverse map on $J(C)$.
We consider the image of a symmetric  theta divisor $\Th$, i.e. $i(\Th)=\Th$.

We show:
\begin{theorem}
Let $C$ be a hyperelliptic curve of genus four and $D$ denotes the image of a symmetric theta-divisor $\Th$ under the natural morphism  $q:J(C)\to K(J(C))$. Let $j'$ denote the closed embedding of $D$ into $K(J(C))$. Then $A^2(D)$ is trivial and hence the kernel of the push-forward homomorphism $j'_*$ from $A^2(D)$ to $A^3(K(J(C)))$ is trivial.
\end{theorem}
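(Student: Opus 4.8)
The plan is to transfer the question successively from $D$ to the symmetric theta divisor $\Th$, then to the symmetric cube $\Sym^{3}(C)$, which resolves $\Th$, and finally to pieces governed by Collino's theorem. First, since $\Th$ is symmetric, the restriction $q|_{\Th}\colon\Th\to D$ is finite of degree two, ramified only over the finitely many images of the $2$-torsion points of $J(C)$ that lie on $\Th$; working rationally, the transfer identities $q_{*}q^{*}=2$ and $q^{*}q_{*}=\mathrm{id}+i^{*}$ identify $A^{2}(D)_{\QQ}$ with the $i^{*}$-invariant subspace of $A_{1}(\Th)_{\QQ}$, the integral statement following by a torsion argument once the relevant intermediate Jacobian is seen to vanish.

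Because $C$ is hyperelliptic, the hyperelliptic involution $\iota$ acts as $-\mathrm{id}$ on $H^{0}(C,\Omega^{1}_{C})$ and hence induces $i=[-1]$ on $J(C)$; moreover a symmetric $\Th$ is a translate of $W_{3}=a(\Sym^{3}C)$ for the Abel--Jacobi map $a\colon\Sym^{3}C\to\Pic^{3}(C)$. This $a$ is birational onto $\Th$, it intertwines the diagonal $\iota$-action upstairs with $i$ downstairs, and it is an isomorphism away from the $\PR^{1}$-bundle $\mathscr{E}$ (fibre $|g^{1}_{2}+p|$ over $p\in C$) lying over the singular curve $W^{1}_{3}\cong C$ of $\Th$. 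Hence $\CH_{1}(\Sym^{3}C)_{\QQ}$ surjects onto $\CH_{1}(\Th)_{\QQ}$ with kernel carried by $\mathscr{E}$, and taking $\iota$-invariants (exact rationally) reduces the problem to showing that, modulo classes of rational curves and of divisors on the ruled surface $\mathscr{E}$, the $\iota$-invariant algebraically trivial $1$-cycles on $\Sym^{3}C$ map to zero in $A^{2}(D)_{\QQ}$.

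For that step I would combine the localization sequences for higher Chow groups attached to the stratifications of $\Sym^{3}C$ by $\mathscr{E}$ and of $D$ by its singular locus with the extension of Collino's theorem proved in \S\ref{Collino}, which makes the push-forward maps on ($\QQ$-coefficient, and higher) Chow groups $\Sym^{m}C\hookrightarrow\Sym^{n}C$ for $m\le n$, and on their open subsets, injective. Through the correspondences coming from the addition maps $\Sym^{m}C\times C\to\Sym^{m+1}C$, this expresses $A_{1}(\Sym^{3}C)^{\iota}_{\QQ}$ in terms of Chow groups of $\Sym^{m}C$ with $m\le 2$, of $\PR^{1}$-bundles over $C$ and over $\PR^{1}$, and of $C^{3}$. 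Hyperellipticity then disposes of most of these: since $\iota$ acts by $-1$ on $H^{0}(C,\Omega^{1}_{C})$, the $\Pic^{0}$-type summands and the whole odd cohomology are $\iota$-anti-invariant and drop out — in particular a smooth model of $D$ has $H^{3}(-,\QQ)=0$ — while for a hyperelliptic curve the Ceresa and Gross--Schoen (modified-diagonal) cycles on $C^{3}$ and $C^{4}$ vanish, killing the remaining middle-dimensional terms that control $\CH^{\bullet}(J(C))$ through the Beauville decomposition. Assembling the pieces yields $A^{2}(D)_{\QQ}=0$, hence $A^{2}(D)=0$, and the triviality of $\ker j'_{*}$ is immediate.

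The main obstacle is the one surviving $\iota$-invariant ``middle'' summand of $\CH_{1}(\Sym^{3}C)_{\QQ}$, namely the one matching the Albanese kernel $T(\Sym^{2}C)$, which is nonzero and $\iota$-invariant: one has to show that it is annihilated upon passing to the singular threefold $D$, so that it contributes nothing to $A^{2}(D)_{\QQ}$. This is precisely where the detailed geometry of the contraction $a$ followed by the Kummer quotient is needed — the class should be absorbed into cycles supported on $\mathscr{E}$ and on the rational curves $W^{1}_{3}$ and $W^{1}_{3}/i$ — and where Collino's theorem for open subsets, inserted into the localization sequence, does the bookkeeping; the low genus $4$ and the hyperelliptic hypothesis enter here through the vanishing statements for cycles on $C^{3}$ and $C^{4}$.
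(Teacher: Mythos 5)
Your route is genuinely different from the paper's, and it contains a gap that is not a matter of detail: it sits exactly at the point you yourself flag as ``the main obstacle'', and none of the tools you propose can close it. Your reductions up to that point are reasonable: rationally, $A^{2}(D)_{\QQ}$ is the $i$-invariant part of $A_{1}(\Th)_{\QQ}$, and the birational morphism $\Sym^{3}C\to\Th$ contracts a $\PR^{1}$-bundle $E$ over $W^{1}_{3}\cong C$, so the kernel of $\CH_{1}(\Sym^{3}C)_{\QQ}\to\CH_{1}(\Th)_{\QQ}$ is carried by $E$. But $\CH_{1}(E)_{\QQ}\cong\CH_{0}(C)_{\QQ}\oplus\CH_{1}(C)_{\QQ}$ is finite-dimensional, whereas the $i_{C}$-invariant summand of $A_{1}(\Sym^{3}C)_{\QQ}$ that you match with the Albanese kernel $T(\Sym^{2}C)$ is infinite-dimensional in Mumford's sense: $i$ acts by $-1$ on $H^{0}(C,\Omega^{1}_{C})$, hence by $+1$ on $H^{2,0}(\Sym^{2}C)=\wedge^{2}H^{0}(C,\Omega^{1}_{C})$, which has dimension $\binom{4}{2}=6$, so essentially all of $T(\Sym^{2}C)$ is rationally $i$-invariant. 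Such a piece cannot be ``absorbed into cycles supported on $E$ and on the rational curves'', and the vanishing of the Ceresa and Gross--Schoen classes for hyperelliptic curves does not help: those are single distinguished elements of the even Beauville components, not generators of them, and the components $\CH^{3}(J(C))_{(s)}$ with $s$ even are precisely the $[-1]$-invariant ones and are untouched by those vanishings. So the central step of your plan --- showing that the surviving $i$-invariant middle summand dies in $D$ --- is not only unproved but cannot be carried out by the mechanisms you list.

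The paper avoids all of this by quotienting at the level of varieties before doing any cycle theory, and that is the idea your proposal lacks. By Proposition \ref{prop1} the involution $\wt{i}$ on $\Pic^{3}C$ lifts to the coordinatewise hyperelliptic involution $i_{C}$ on $\Sym^{3}C$, so $D=\Th/i$ is birational to $\Sym^{3}C/i_{C}$, which the paper identifies with $\Sym^{3}\PR^{1}\cong\PR^{3}$; triviality of $A^{2}(\PR^{3})$, birational invariance of weak representability of $A^{2}$, and $H^{3}(\PR^{3},\ZZ)=0$ then give the conclusion with no cycle-by-cycle bookkeeping. You should, however, examine that identification yourself: the natural map $\Sym^{3}C/i_{C}\to\Sym^{3}\PR^{1}$ has degree $2^{3}/2=4$, since $i_{C}$ is the simultaneous flip while $\Sym^{3}\PR^{1}$ is the quotient by all coordinatewise flips, and reconciling this with the infinite-dimensionality of $T(\Sym^{2}C)^{i}_{\QQ}$ exhibited above is a genuine issue. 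In any case, your proposal as written does not prove the theorem.
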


{\small \textbf{Acknowledgements:} We thank C. Voisin, R. Sebastien for pointing out an inaccuracy.
The first named author is grateful to Department of Atomic Energy, India for funding this project.}

\textbf{Notations}:
Here $k$ is an uncountable, algebraically closed field and all the varieties are defined over $k$. Denote
$$
CH_d(X)_{\mathbb Q}:= CH_d(X)\otimes {\mathbb Q}.
$$
Here $X$ is a variety of pure dimension $n$, defined over $k$ and $CH_d(X)$ denotes the Chow group of $d$-dimensional cycles modulo rational equivalence.

We denote $A_d(X)$  the group of algebraically trivial $k$- cycles on $X$ modulo rational equivalence. Let
$$
A^d(X):= A_{dim\,X-d}(X),\, \,CH^d(X):=CH_{dim\,X-d}(X).
$$
We write
$$
CH_d(X,s)_{\mathbb Q}:= CH^{dim\,X -d}(X,s) \otimes \mathbb Q
$$
the Bloch's higher Chow groups with ${\mathbb Q}$-coefficients.
When $X$ is a singular variety, we replace above Chow groups by Fulton's operational Chow groups.
This will be essential in
proof of Theorem \ref{prop2}, where we consider the operational Chow groups of theta divisor $\Theta_C$ which is a singular variety.

\section{Kummer variety of a hyperelliptic curve}

In this section we consider a hyperelliptic curve $C$ of genus $4$ and the Kummer variety $K(J(C))$ associated to the Jacobian $J(C)$ of the curve $C$. Let $\Th$ denote a symmetric theta divisor inside $J(C)$ and let $D$ denote the image of $\Th$ inside $K(J(C))$, under the natural morphism from $J(C)$ to $K(J(C))$. We would like to investigate the kernel of the push-forward homomorphism at the level of Chow groups of one cycles, induced by the closed embedding of $D$ in $K(J(C))$.

First we prove the following two propositions which are true for any smooth projective curve of genus $g$. Define the map $\wt{i}$ from $\Pic^{g-1}C$ to itself, given by
  $$
  \wt{i}(\bcO(D))=K_C\otimes \bcO(-D)\;,
  $$
where for a divisor $D$, $\bcO(D)$ denote the line bundle associated to $D$ and $K_C$ is the canonical line bundle on $C$.
Consider a theta characteristic $\tau$ such that $\tau^2=K_C$. Consider the following map
$$
\otimes \tau^{-1}:\Pic^{g-1}C\to J(C)
$$
given by
$$
\bcO(D)\mapsto \bcO(D)\otimes \tau^{-1})\;.
$$

\begin{lemma}
The following square is commutative.

$$
  \diagram
  \Pic^{g-1}C \ar[dd]_-{\otimes \tau^{-1}} \ar[rr]^-{\wt{i}} & & \Pic^{g-1}C \ar[dd]^-{\otimes \tau^{-1}} \\ \\
   J(C)\ar[rr]^-{i} & & J(C)
  \enddiagram
$$
\end{lemma}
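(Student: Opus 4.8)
The claim is that the diagram
$$
\diagram
\Pic^{g-1}C \ar[dd]_-{\otimes \tau^{-1}} \ar[rr]^-{\wt{i}} & & \Pic^{g-1}C \ar[dd]^-{\otimes \tau^{-1}} \\ \\
J(C)\ar[rr]^-{i} & & J(C)
\enddiagram
$$
commutes, where $\wt i(\bcO(D)) = K_C \otimes \bcO(-D)$, the vertical maps are $L \mapsto L \otimes \tau^{-1}$, and $i$ is the inverse (i.e. $M \mapsto M^{-1}$) on $J(C)$. This is a purely formal computation with line bundles, so the plan is simply to chase an arbitrary element around both ways and check the outputs agree.

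\textbf{The computation.} Start with a line bundle $L = \bcO(D) \in \Pic^{g-1}C$. Going right then down: first apply $\wt i$ to get $K_C \otimes L^{-1}$, then tensor with $\tau^{-1}$ to land in $J(C)$ at $K_C \otimes L^{-1} \otimes \tau^{-1}$. Going down then right: first tensor with $\tau^{-1}$ to get $L \otimes \tau^{-1} \in J(C)$, then apply $i$, i.e. invert, to get $(L \otimes \tau^{-1})^{-1} = L^{-1} \otimes \tau$. Now compare: the first path gives $K_C \otimes \tau^{-1} \otimes L^{-1}$, and since $\tau^2 = K_C$ we have $K_C \otimes \tau^{-1} = \tau$, so the first path equals $\tau \otimes L^{-1}$, which is exactly the second path. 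Hence the square commutes. One should also remark that both composites do land in $J(C) = \Pic^0 C$: the degree of $K_C \otimes L^{-1} \otimes \tau^{-1}$ is $(2g-2) - (g-1) - (g-1) = 0$, consistent with $\wt i$ preserving $\Pic^{g-1}$ (degree $(2g-2)-(g-1) = g-1$) and $\otimes\tau^{-1}$ dropping degree by $g-1$.

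\textbf{Main obstacle.} There is essentially no obstacle; the only thing requiring a word of care is bookkeeping of conventions — namely that $i$ on the abelian variety $J(C)$ is literally $L \mapsto L^{-1}$ (additive inverse in the group law), and that $\wt i$ is as defined in the text. Once those are pinned down, the identity $K_C \otimes \tau^{-1} = \tau$ coming from $\tau^2 = K_C$ is the single substantive input, and the diagram closes. I would write the proof as the two-line chase above together with the degree check.
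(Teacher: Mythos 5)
Your computation is correct and is essentially identical to the paper's own proof: both chase $\bcO(D)$ around the square and use $\tau^2 = K_C$ to identify $K_C\otimes\bcO(-D)\otimes\tau^{-1}$ with $\bcO(-D)\otimes\tau$. The added degree check is a harmless bonus not present in the paper.
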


\begin{proof}
First observe that $i\circ (\otimes \tau^{-1})$ is $\bcO(-D)\otimes \tau$. On the other hand $\otimes \tau^{-1}\circ\wt{i}(\bcO(D))$ is equal to
$$
K_C\otimes \bcO(-D)\otimes\tau^{-1}
$$
that is nothing but
$$
\bcO(-D)\otimes \tau^2\otimes \tau^{-1}
$$
which is equal to
$$
\bcO(-D)\otimes \tau\;.
$$
So the above diagram is commutative.

\end{proof}

The commutativity of the above diagram  gives us a map from $\Pic^{g-1}C$ to the Kummer variety $K(J(C))$.

Now assume that $C$ is hyperelliptic, and $h:C\to\PR^1$ be the hyperelliptic map. So we have the following commutative triangle

$$
  \xymatrix{
  C \ar[rr]^-{i}
  \ar[ddrr]_-{h} & &
  C \ar[dd]^-{h} \\ \\
  & & \PR^1
  }
$$
where $i$ is the hyperelliptic involution induced by the degree $2$ morphism $h$.
We will use the following.
\begin{theorem}
\label{theorem1}(\cite{Hartshorne},IV,$5.4$)Let $D$ be an effective special divisor on a smooth curve $C$, then
$$
\dim(|D|)\leq \frac{1}{2}\deg(D)\;.
$$
Furthermore equality occurs if and only if $D=0$ or $D=K_C$ or $C$ is hyperelliptic and $D$ is a multiple of the unique $g^1_2$ on $C$.
\end{theorem}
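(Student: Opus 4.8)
This is the classical theorem of Clifford, and the plan is to run the standard two-part argument: a super-additivity lemma for linear series, Riemann--Roch for the inequality, and a geometric analysis of the extremal case. The super-additivity lemma asserts that for effective divisors $D,E$ on $C$ one has $\dim|D|+\dim|E|\le\dim|D+E|$. To prove it, consider the addition morphism
$$
\alpha\colon |D|\times|E|\longrightarrow|D+E|,\qquad (D',E')\longmapsto D'+E'.
$$
The fibre of $\alpha$ over a divisor $G\in|D+E|$ injects into the finite set of sub-divisors of the fixed effective divisor $G$ of degree $\deg D$, so $\alpha$ has finite fibres; since $|D|\times|E|$ is irreducible, the image of $\alpha$ has dimension exactly $\dim|D|+\dim|E|$, which yields the inequality.

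For the inequality in the theorem: since $D$ is special, $h^{0}(K_{C}-D)=h^{1}(D)>0$, so $K_{C}-D$ is linearly equivalent to an effective divisor $E$ with $D+E\sim K_{C}$. The lemma gives $\dim|D|+\dim|K_{C}-D|\le\dim|K_{C}|=g-1$, while Riemann--Roch gives $\dim|D|-\dim|K_{C}-D|=\deg D-g+1$; adding these yields $2\dim|D|\le\deg D$.

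The classification of the equality case is the real work and, I expect, the main obstacle. The cases $D=0$ and $D\sim K_{C}$ are immediate, so assume $0<\deg D<2g-2$; then $\deg D$ is even, and a short Riemann--Roch computation shows $\dim|D|\ge 1$ and $\dim|K_{C}-D|\ge 1$. First I would reduce to $|D|$ being base-point-free: if $B$ is its fixed part and $D=D'+B$, then $D'$ is again special and the inequality already established forces $\tfrac12\deg D=\dim|D'|\le\tfrac12\deg D'\le\tfrac12\deg D$, hence $B=0$. Set $r=\dim|D|$, so $|D|$ is a base-point-free $g^{r}_{2r}$ defining a morphism $\phi\colon C\to\PR^{r}$ onto an irreducible non-degenerate curve $C'$ of some degree $d'\ge r$, with $(\deg\phi)\,d'=2r$; hence $\deg\phi\in\{1,2\}$. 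If $\deg\phi=2$, then $d'=r$ and a non-degenerate curve of minimal degree $r$ in $\PR^{r}$ is a rational normal curve, so $C\to C'\cong\PR^{1}$ realises $C$ as hyperelliptic, and $D$ is linearly equivalent to $r$ times a member of the $g^{1}_{2}$. If $\deg\phi=1$ (possible only for $r\ge 2$), Castelnuovo's bound for the geometric genus of a non-degenerate degree-$2r$ curve in $\PR^{r}$ gives $g\le r+1$, while speciality of $D$ yields $h^{1}(D)=g-r>0$, i.e. $g\ge r+1$; thus $g=r+1$ and $\deg D=2g-2$, and Riemann--Roch then forces $D\sim K_{C}$, a contradiction. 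Together with the standard fact that the $g^{1}_{2}$ on a hyperelliptic curve is unique, this finishes the classification. The technical crux lies in this last step --- the geometry of linear series of minimal degree and Castelnuovo's genus bound; this is in essence the argument of \cite[IV, 5.4]{Hartshorne}.
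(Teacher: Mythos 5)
Your argument is correct, but note that the paper does not prove this statement at all: it is quoted verbatim as Clifford's theorem with the citation to Hartshorne, IV.5.4, so the only meaningful comparison is with the proof the paper points to. Your treatment of the inequality is exactly Hartshorne's: the super-additivity lemma $\dim|D|+\dim|E|\le\dim|D+E|$ proved via the finite-fibre addition map (his Lemma IV.5.5), combined with Riemann--Roch applied to $D$ and $K_C-D$. Where you genuinely diverge is the equality case. Hartshorne stays entirely inside divisor combinatorics: among the divisors violating the classification he takes one of minimal degree and, by intersecting a member of $|D|$ with a member of $|K_C-D|$ and applying a refined form of the super-additivity lemma to the greatest common divisor, he manufactures a $g^1_2$ directly; the whole argument needs nothing beyond Riemann--Roch. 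You instead pass to the morphism $\phi\colon C\to\PR^r$ defined by the (base-point-free) complete system $|D|$, split on $\deg\phi\in\{1,2\}$, and invoke two further classical inputs: that an irreducible non-degenerate curve of minimal degree $r$ in $\PR^r$ is a rational normal curve, and Castelnuovo's genus bound to kill the birational case. This is the standard ``geometric'' proof (as in Arbarello--Cornalba--Griffiths--Harris), and it is not circular since Castelnuovo's bound is independent of Clifford; it buys a clearer geometric picture of why the extremal linear series must be composed with the hyperelliptic pencil, at the price of substantially heavier prerequisites than the elementary induction in the cited source. The small reductions you perform (evenness of $\deg D$, base-point-freeness of $|D|$, $\deg\phi\,d'=2r$ with $d'\ge r$) are all correct, and deferring the uniqueness of the $g^1_2$ to the standard fact is harmless.
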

In the above theorem $g^r_d$ denotes a linear system of dimension $r$ and degree $d$. Also by special divisor we mean a divisor $D$ such that the dimension of the   linear system associated to $K_C-D$ is greater than zero.

Let $l$ be the map from $\Sym^{g-1}C$ to $\Pic^{g-1}C$ given by
$$
l(D)=\bcO(D)\;,
$$
where $\bcO(D)$ denote the line bundle associated to the divisor $D$. Consider $i_C:\Sym^{g-1}C\to \Sym^{g-1}C$ given by
$$
i_C(P_1+\cdots+P_n)=i(P_1)+\cdots+i(P_n)\;,
$$
where $i$ is the hyperelliptic involution on $C$.
We show:

\begin{proposition}
\label{prop1}

The following diagram is commutative.

$$
  \diagram
  \Sym^{g-1}C \ar[dd]_-{i_C} \ar[rr]^-{l} & & \Pic^{g-1}C \ar[dd]^-{\wt{i}} \\ \\
  \Sym^{g-1}C \ar[rr]^-{l} & & \Pic^{g-1}C
  \enddiagram
  $$
In other words, the involution $\wt{i}$ lifts on the $(g-1)$-st symmetric power of the curve.
\end{proposition}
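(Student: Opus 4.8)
The plan is to reduce the commutativity of the square to a statement about linear equivalence of divisors on $C$, and then use the defining property $h\circ i=h$ of the hyperelliptic map. Chasing the diagram, commutativity means $\wt{i}(l(D))=l(i_C(D))$ for every effective divisor $D=P_1+\cdots+P_{g-1}$ of degree $g-1$; by the definition of $\wt{i}$ this reads $K_C\otimes\bcO(-D)\cong \bcO(i(P_1)+\cdots+i(P_{g-1}))$, i.e.
$$
\bcO\bigl(D+i_C(D)\bigr)\cong K_C .
$$
So the whole proposition reduces to proving that $D+i_C(D)$ is linearly equivalent to $K_C$ for every effective $D$ of degree $g-1$.

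First I would treat a single point. Since $h\circ i=h$, for any $P\in C$ the divisor $P+i(P)$ is exactly the scheme-theoretic fibre $h^{-1}(h(P))$: this is clear when $P$ is not a ramification point of $h$, and when it is we have $i(P)=P$ and the fibre is $2P=P+i(P)$. Hence $\bcO_C(P+i(P))\cong h^{*}\bcO_{\PR^1}(1)=:L$, a line bundle of degree $2$ that does not depend on $P$; in divisor-class language $P+i(P)\sim g^1_2$, the hyperelliptic pencil. Summing over the $g-1$ points of $D$ gives $D+i_C(D)\sim (g-1)\,g^1_2$, that is $\bcO(D+i_C(D))\cong L^{\otimes(g-1)}$, which is in particular independent of $D$.

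It remains to identify this class with $K_C$, and this is the one step that uses more than bookkeeping. Both $L^{\otimes(g-1)}$ and $K_C$ have degree $2g-2$. The complete linear system $|L^{\otimes(g-1)}|$ contains the image of $|\bcO_{\PR^1}(g-1)|$ under the injective pullback $h^{*}$, so $\dim|L^{\otimes(g-1)}|\ge g-1$. If $L^{\otimes(g-1)}$ were not isomorphic to $K_C$, then $K_C\otimes (L^{\otimes(g-1)})^{-1}$ would be a nontrivial line bundle of degree zero, hence without nonzero sections, so $L^{\otimes(g-1)}$ would be non-special and Riemann--Roch would force $\dim|L^{\otimes(g-1)}|=(2g-2)-g=g-2$, a contradiction. (Equivalently one may invoke Theorem \ref{theorem1} in its equality case together with the standard description of the canonical system of a hyperelliptic curve as $(g-1)$ times the $g^1_2$, or the formula $h_{*}\bcO_C=\bcO_{\PR^1}\oplus\bcO_{\PR^1}(-g-1)$.) Therefore $L^{\otimes(g-1)}\cong K_C$, which finishes the proof.

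I do not expect a serious obstacle here: the entire content sits in the observation $P+i(P)\sim g^1_2$ and in the equivalence $(g-1)\,g^1_2\sim K_C$. The only places calling for a little care are treating the ramification points of $h$ uniformly in the first step, and pinning down $L^{\otimes(g-1)}$ on the nose rather than merely up to torsion, for which the dimension count above is enough.
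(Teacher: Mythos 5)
Your proof is correct, and it takes a cleaner route than the paper's at the one step where the argument has real content. Both you and the paper reduce the commutativity of the square to the single linear equivalence $D+i_C(D)\sim K_C$ for effective $D$ of degree $g-1$, and both ultimately rest on Riemann--Roch together with the dichotomy that a degree-zero line bundle is either trivial or has no nonzero sections. The difference is in how the class of $D+i_C(D)$ is pinned down. You observe at the outset that $P+i(P)$ is the scheme-theoretic fibre $h^{-1}(h(P))$ (handling ramification points correctly), so that $\bcO(D+i_C(D))\cong h^{*}\bcO_{\PR^1}(g-1)$ unconditionally, and you then identify $h^{*}\bcO_{\PR^1}(g-1)$ with $K_C$ by a dimension count: $\dim|h^{*}\bcO_{\PR^1}(g-1)|\ge g-1$ since $h^{*}$ is injective on global sections, while a non-special bundle of degree $2g-2$ would have $\dim=g-2$. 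The paper instead argues by cases: either $\bcO(D+i_C D)\cong K_C$, or $D+i_C D$ is non-special with $h^0=g-1$, and in the second case it appeals to the equality case of Clifford's theorem (Theorem \ref{theorem1}) to write $\bcO(D+i_C D)\cong L^{g-1}$ with $L$ the $g^1_2$, then derives a contradiction by computing $h^0(L^{g-1})$ via the projection formula. Your version is preferable: in the paper's second case one has $\dim|D+i_C D|=g-2<\frac12\deg(D+i_C D)$, so the equality case of Clifford does not literally apply there, whereas your fibre argument gives $\bcO(D+i_C D)\cong L^{\otimes(g-1)}$ for free from $h\circ i=h$ and needs no case split at all. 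Your parenthetical fallbacks (the standard description of the canonical system of a hyperelliptic curve, or $h_{*}\bcO_C=\bcO_{\PR^1}\oplus\bcO_{\PR^1}(-g-1)$) are also fine.
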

\begin{proof}
First, \cite[Proposition 2.3]{Hartshorne} gives:
$$
K_C=h^*K_{\PR^1}+\bcO(B)
$$
where $B$ is the branch divisor of the morphism $h$ and degree of $B$ is $2g+2$. Now we have to show that $\bcO(i_C(D))$ is $K_C-\bcO(D)$. In other words, we have to prove that
$$
\bcO(i_C(D))\otimes \bcO(D)=K_C
$$
that is
$$\bcO(D+i_C(D))=K_C\;.$$
Here $i_C$ is the involution induced on the symmetric powers of $C$ defined above, by the involution $i$ on $C$. Observe that $D+i_C(D)$ is invariant under the involution $i_C$.

Now consider the morphism $h:C\to \PR^1$.
We compute $h^0(K_C-D-i_C D)$, that is the dimension of the vector space of global sections of the line bundle  $K_C-\bcO(D+i_C D)$. By Riemann-Roch theorem we have that
$$
h^0(\bcO(D+i_C D))-h^0(K_C-\bcO(D+i_C D))=2g-2-g+1=g-1\;.
$$
Observe that $\deg(K_C-\bcO(D+i_C D))=0$.
Now for a divisor $D$ the degree is zero means that either the divisor is zero, in this case we have $h^0(D)$ is one or $D$ is non-zero. In the case $D$ is non-zero, we have $h^0(D)=0$, otherwise the line bundle associated to $D$ would be trivial.

So we have two cases

$$
K_C=\bcO(D+i_C D)
$$
or
$$
h^0(K_C-\bcO(D+i_C D))=0\;.
$$

Suppose that $h^0(K_C-\bcO(D+i_C D))=0$.
So by the Riemann-Roch theorem we get that
$$
h^0(\bcO(D+i_C D))=2g-2-g+1=g-1\;.
$$

By the theorem \ref{theorem1} we get that $\bcO(D+iD)$ is equal to $L^{g-1}$ for a line bundle $L\in g^1_2$ on $C$. We have
$$
K_C=h^*K_{\PR^1}+\bcO(B)
$$
and also by \ref{theorem1} we get that any two divisors of degree $2g-2$ on a hyper-elliptic curve $C$, are linearly equivalent, that is the corresponding line bundles on $C$ are isomorphic. This tells us that $h^*\bcO_{\PR^1}(g-1)$ and $L^{g-1}$ are isomorphic. By the projection formula we get that
$$
h_*L^{g-1}=h_*h^*(\bcO_{\PR^1}(g-1))
$$
which is nothing but
$$
\bcO_{\PR^1}(g-1)\oplus \bcO_{\PR^1}(g-1)\otimes \bcO(B)\;.
$$
Since $H^0(C,L^{g-1})$ is isomorphic to to $H^0(\PR^1,h_*L^{g-1})$, we have $h^0(L^{g-1})$ is greater than $g-1$ which is a contradiction. So the possibility that $h^0(K_C-\bcO(D+i_C D))=0$ is ruled out and we have the only  possibility
$$
K_C=\bcO(D+i_C D)\;.
$$
This gives us the commutativity of the diagram.
$$
  \diagram
  \Sym^{g-1}C \ar[dd]_-{i} \ar[rr]^-{l} & & \Pic^{g-1}C \ar[dd]^-{\wt{i}} \\ \\
  \Sym^{g-1}C \ar[rr]^-{l} & & \Pic^{g-1}C
  \enddiagram
  $$
This ends the proof.
\end{proof}
Next, for Chow groups computations, we identify $\Pic^{g-1}C$ with $J(C)$ using a base point $P_0\in C$. The image of $\Sym^{g-1}C$ in $\Pic^{g-1}C$ is denoted by $\Th$ and it is symmetric under $\wt{i}$, by Proposition \ref{prop1}.
\begin{theorem}
Let $C$ be a hyperelliptic curve of  genus $4$ and let $K(\Pic^{3}C)$ denote the Kummer variety associated to $\Pic^{3}C$. Let $D$ denote the image of a symmetric theta-divisor $\Th$ under the natural morphism from $\Pic^{3}C$ to $K(\Pic^{3}C)$. Let $j'$ denote the closed embedding of $D$ into $K(\Pic^{3}C)$. Then $A^2(D)$ is trivial and hence the kernel of the push-forward homomorphism $j'_*$ from $A^2(D)$ to $A^3(K(\Pic^{3}C))$ is trivial.
\end{theorem}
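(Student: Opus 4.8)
\emph{Step 1: reduce to the symmetric cube.} I would first replace $\Th$ by the concrete model $W_3\subset\Pic^3C$, the image of the Abel--Jacobi map $l\colon\Sym^3C\to\Pic^3C$. For $g=4$ the variety $W_3$ is three--dimensional and, by Riemann--Roch together with Clifford's theorem (Theorem \ref{theorem1}), the generic line bundle of $W_3$ has a one--dimensional space of sections, so $l$ is birational. By Proposition \ref{prop1}, $\wt i$ on $W_3$ lifts to $i_C$ on $\Sym^3C$, i.e. $l\circ i_C=\wt i\circ l$; hence $D=W_3/\langle\wt i\rangle$ is birationally the quotient $\Sym^3C/\langle i_C\rangle$ and is dominated by $\Sym^3C$ through $f=q\circ l$. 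Working with $\QQ$--coefficients and with Fulton's operational Chow groups for the singular $D$ and $W_3$, the finite--quotient formulas give $A^2(D)_{\QQ}\cong A^2(W_3)_{\QQ}^{\wt i}$, which is a subquotient of the $i_C$--invariant part of the algebraically trivial operational $1$--cycles on $\Sym^3C$. So the task becomes to understand the $i_C$--invariants of $A^2(\Sym^3C)_{\QQ}$.

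\emph{Step 2: use that the hyperelliptic involution acts by $-1$ on $H^1$.} Since $C$ is hyperelliptic, $i^{*}=-\id$ on $H^1(C)$, hence on the submotive $h^1(C)$; therefore in the Chow--K\"unneth decomposition $h(\Sym^3C)\cong\bigoplus_{j+k\le3}\bigl(\bigwedge^{j}h^1(C)\bigr)\otimes\Le^{k}$ the involution $i_C$ acts on the $(j,k)$--summand by the sign $(-1)^{j}$. Every odd--$j$ summand is thus anti--invariant and is destroyed in the quotient: in particular $H^3(\Sym^3C)^{i_C}=0$, so a resolution of $D$ has no intermediate Jacobian, and the summand $\bigwedge^{3}h^1(C)$ --- which supplies the albanese/Abel--Jacobi classes in $A^2(\Sym^3C)_{\QQ}$ --- is killed. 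The only surviving contribution to $A^2(D)_{\QQ}$ comes from the even summand $\bigwedge^{2}h^1(C)\cong h^2(J(C))$, so $A^2(D)_{\QQ}$ is a subquotient of the ``deep'' codimension--two cycles $\CH^2\!\bigl(\bigwedge^{2}h^1(C)\bigr)_{\QQ}$ of $J(C)$ (those concentrated in $H^2$).

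\emph{Step 3: annihilate the remaining piece --- the hard part.} The heart of the matter is to show this last piece contributes nothing; this is precisely where the hyperelliptic hypothesis and, for $g=4$, the smallness of $W_3$ must enter, and it is why no analogous statement holds for a general ample divisor in $|n\Th|$ (cf. Voisin's remark in the introduction). I would attack it by feeding Collino's theorem and its extension to open subcurves (\S\ref{Collino}) into the localization sequences for the chain of divisors $\Sym^{k}C+\{P_{0}\}\hookrightarrow\Sym^{k+1}C$, thereby expressing the operational Chow groups of $\Sym^3C$ --- $\langle i_C\rangle$--equivariantly --- through those of $\Sym^{\le2}$ of open subsets of $C$, which are completely understood; keeping track of the sign $(-1)^{j}$ of $i_C$ on each stratum should collapse the $i_C$--invariant algebraically trivial operational classes onto $\QQ$--Tate classes, giving $A^2(D)_{\QQ}=0$, with the integral statement following because $f$ has degree $2$, and $\ker j'_{*}=0$ then immediate. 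The obstacle I anticipate is the bookkeeping in this last step: one must check that the localization sequences are $\langle i_C\rangle$--equivariant, that the connecting homomorphisms do not reintroduce invariant classes, and that descent from the smooth $\Sym^3C$ to the singular quotient discards exactly the anti--invariant and exceptional classes. It is the interaction of the group action with the operational Chow groups of the singular $W_3$, rather than any single cohomological input, that is delicate; should the naive count leave a residual invariant class, one would have to exploit more of the special geometry of the genus--$4$ theta divisor (its unique $g^{1}_{2}$, the curve $W^{1}_{3}$) to eliminate it.
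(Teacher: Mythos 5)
Your Step 1 coincides with the paper's reduction, and your Step 2 observation that $i_C$ acts by $(-1)^j$ on $\bigwedge^j h^1(C)$, so that the $i_C$-invariant part of $H^3(\Sym^3C)$ vanishes, is correct and is in spirit the reason the theorem holds. But there is a genuine gap at Step 3, and you flag it yourself: you never prove that the surviving piece of $A^2(D)$ vanishes. The vanishing of the invariant $H^3$ kills $A^2(D)$ only if you already know that $A^2(D)$ is \emph{weakly representable} by an abelian variety whose $H_1$ embeds in that cohomology; for algebraically trivial $1$-cycles on a threefold this is not automatic (this is exactly the Mumford/Bloch--Srinivas phenomenon), and your proposed route through Collino's theorem and equivariant localization sequences is a program rather than an argument --- the ``bookkeeping'' you defer is where all the content lies. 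Nothing in your sketch excludes a non-representable, infinite-dimensional $A^2$ concentrated in the $\bigwedge^2h^1(C)$ summand.

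The paper closes this gap by a much more elementary geometric step: by Proposition \ref{prop1} the quotient $\Th/\wt{i}$ is birational to $\Sym^3C/i_C$, which the paper identifies with $\Sym^3\PR^1\cong\PR^3$; since $A^2(\PR^3)=0$ is (trivially) weakly representable and weak representability of $A^2$ is a birational invariant, $A^2(D)$ is represented by an abelian variety $A$, and $A=0$ because $H^3(\PR^3,\ZZ)=0$ --- which is the representable shadow of your invariant-$H^3$ computation. With $A^2(D)=0$ the kernel statement is immediate. To salvage your approach you must supply the representability input, e.g.\ by a Bloch--Srinivas style decomposition of the diagonal of a resolution of $D$, or by genuinely carrying out the Collino-type induction, not just the cohomological sign count. (A caveat on the paper's side that your degree remark brushes against: the natural map $\Sym^3C/i_C\to\Sym^3\PR^1$ is generically $4{:}1$, not an isomorphism, so the identification used there also needs more justification; but that does not repair Step 3 of your argument as written.)
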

\begin{proof}
The commutativity of the diagram in \ref{prop1} gives us a map from
$\Sym^{3}C/i$ to $\Pic^{3}C/\wt{i}\cong K(\Pic^{3}C)$, where the first morphism is birational and the second one is finite. Now $\Sym^{3}C/i$ is isomorphic to $\Sym^{3}\PR^1$, which is isomorphic to the projective space $\PR^{3}$. Note that $A^2(\PR^{3})$ is trivial hence weakly representable. Since weak representability of $A^2$ is a birational invariant, we get that $A^2(D)$ is isomorphic to an abelian variety $A$. By the proposition $6$ in \cite{BG} we get that the kernel of the push-forward homomorphism $j'_*$ from $A^2(D)$ to $A^3(K(\Pic^{3}C))$ is a countable union of translates of an abelian subvariety $A_0$ of the abelian variety $A$ representing $A^2(D)$. Since $H^{3}(\PR^{3},\ZZ)$ is trivial, we get that the abelian variety $A$ is trivial. So the kernel of the push-forward homomorphism $j'_*$ is trivial.
\end{proof}


\section{Inclusion of theta divisor into the Jacobian}

In this section we investigate the kernel of the push-forward homomorphism, induced by the closed embedding of the theta divisor inside the Jacobian of a smooth projective curve $C$ of genus $g$. More precisely we prove the following theorem.
\begin{theorem}
\label{prop2}
Let $C$ be a smooth projective curve of genus $g$. Let $\Th$ be a symmetric  theta-divisor embedded inside $J(C)$ and let $j$ denote the embedding. Then the kernel of the push-forward homomorphism $j_*$ from ${\CH_d(\Th)}_{\QQ}$ to ${\CH_d(J(C))}_{\QQ}$ is trivial.
\end{theorem}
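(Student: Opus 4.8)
The plan is to propagate Collino's injectivity theorem for symmetric powers of $C$ down to the pair $(\Th,J(C))$, using the Abel--Jacobi resolutions of $\Th$ and of $J(C)$ together with Bloch's localization sequence. Identify $\Th$ with $W_{g-1}\subset\Pic^{g-1}C\cong J(C)$. The Abel--Jacobi map $u\colon\Sym^{g-1}C\to\Th$ is proper and birational (its positive--dimensional fibres $\PR^{h^0(\bcO(D))-1}$ all lie over the proper closed locus $W^1_{g-1}$), so $u_*\colon\CH_d(\Sym^{g-1}C)_\QQ\to\CH_d(\Th)_\QQ$ is surjective; similarly $v\colon\Sym^{g}C\to J(C)$ is proper and birational onto the smooth variety $J(C)$, hence admits a Gysin pullback $v^*$ on ordinary and higher Chow groups with $v_*v^*=\mathrm{id}$, exhibiting $\CH_\ast(J(C),s)_\QQ$ as a direct summand of $\CH_\ast(\Sym^{g}C,s)_\QQ$, compatibly with restriction to open subsets. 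Adding the base point $P_0$ yields a closed embedding $\Sym^{g-1}C\hookrightarrow\Sym^{g}C$ satisfying $v|_{\Sym^{g-1}C}=j\circ u$, with open complement $\Sym^{g}(C\setminus P_0)$.

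Next I would bring in the localization sequence. Since $\Th$ is ample, $W:=J(C)\setminus\Th$ is affine, and Bloch's localization exact sequence for higher Chow groups with $\QQ$--coefficients gives $\CH_d(W,1)_\QQ\xrightarrow{\partial}\CH_d(\Th)_\QQ\xrightarrow{j_*}\CH_d(J(C))_\QQ$; hence $\ker j_*=\operatorname{im}\partial$, and it suffices to show $\partial=0$, i.e.\ that $\CH_d(J(C),1)_\QQ\to\CH_d(W,1)_\QQ$ is surjective. By the summand decomposition above (applied over $J(C)$ and over $W$, using the restriction of $v$), this follows once $\CH_\ast(\Sym^{g}C,1)_\QQ\to\CH_\ast(v^{-1}(W),1)_\QQ$ is surjective. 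As a closed set, $v^{-1}(\Th)=\Sym^{g-1}C\cup G^1_g$, where $G^1_g=\{E:h^0(\bcO(E))\ge 2\}$ is the locus contracted by $v$; therefore $v^{-1}(W)=\Sym^{g}(C\setminus P_0)\setminus G'$ with $G'=G^1_g\cap\Sym^{g}(C\setminus P_0)$. The restriction factors through $\CH_\ast(\Sym^{g}(C\setminus P_0),1)_\QQ$, and its first step $\CH_\ast(\Sym^{g}C,1)_\QQ\to\CH_\ast(\Sym^{g}(C\setminus P_0),1)_\QQ$ is surjective because, by Collino's theorem, the pushforward $\CH_\ast(\Sym^{g-1}C)_\QQ\to\CH_\ast(\Sym^{g}C)_\QQ$ is injective, so the boundary map in the localization sequence for $\Sym^{g-1}C\subset\Sym^{g}C$ vanishes.

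For the remaining step $\CH_\ast(\Sym^{g}(C\setminus P_0),1)_\QQ\to\CH_\ast(\Sym^{g}(C\setminus P_0)\setminus G',1)_\QQ$ I would combine the extension of Collino's theorem from \S\ref{Collino} — injectivity of the pushforward on higher Chow groups between symmetric powers of $C$ and of its open subsets, equivalently surjectivity of the attendant restriction maps on the $\CH(-,1)$ level — with an induction on $g$. The contracted locus $G^1_g$ is, generically, a $\PR^1$--bundle over $W^1_g$, and $W^1_g\cong W_{g-2}$ under $L\mapsto K_C\otimes L^{-1}$; hence $G^1_g$, and with it $G'$, is governed by a projectivized bundle over $\Sym^{g-2}C$, to which the inductive hypothesis and the extended Collino theorem apply, forcing the boundary map of the localization sequence for $G'\subset\Sym^{g}(C\setminus P_0)$ to vanish. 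Chasing this through the compatible localization sequences gives $\partial=0$, hence $\ker j_*=0$. The operational Chow groups of the singular varieties $\Th=W_{g-1}$, $G^1_g$ and $W^1_g$ are used precisely in setting up the comparison maps in this chase.

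The step I expect to be the main obstacle is this last one. The $\PR^1$--bundle description of $G^1_g$ over $W^1_g\cong W_{g-2}$ is only generic, degenerating along the deeper Brill--Noether loci $W^r_g$, so one has to track the whole Brill--Noether stratification and formulate the correct inductive statement, and one must verify that the several localization sequences in play — for $\Th\subset J(C)$, for $v^{-1}(\Th)\subset\Sym^{g}C$, for $\Sym^{g-1}C\subset\Sym^{g}C$, and for $G'\subset\Sym^{g}(C\setminus P_0)$ — are genuinely compatible via $u$, $v$, the projection formula and birational--invariance arguments. Once these compatibilities and the inductive framework are in place, $\partial=0$, and hence the theorem, follows formally from Collino's theorem, its extension in \S\ref{Collino}, and the induction.
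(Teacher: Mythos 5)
Your reduction is sound up to a point: the localization sequence for $\Th\subset J(C)$ does identify $\ker j_*$ with the image of $\partial:\CH_d(J(C)\setminus\Th,1)_{\QQ}\to\CH_d(\Th)_{\QQ}$; the splitting $v_*v^*=\id$ legitimately transports the surjectivity question to $\Sym^{g}C$; and the first restriction $\CH_d(\Sym^{g}C,1)_{\QQ}\to\CH_d(\Sym^{g}(C\setminus P_0),1)_{\QQ}$ is indeed surjective because Collino's injectivity kills the relevant boundary map. But the argument then stands or falls on the last step, which you flag yourself and do not prove: surjectivity of $\CH_d(\Sym^{g}(C\setminus P_0),1)_{\QQ}\to\CH_d(\Sym^{g}(C\setminus P_0)\setminus G',1)_{\QQ}$, which by localization is \emph{equivalent} to injectivity of the pushforward $\CH_d(G')_{\QQ}\to\CH_d(\Sym^{g}(C\setminus P_0))_{\QQ}$ from the contracted Brill--Noether locus. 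Neither Collino's theorem nor its extension in \S\ref{Collino} applies here: those results concern the base-point embeddings $\Sym^m C\hookrightarrow\Sym^n C$ and their restrictions to open subsets, whereas $G^1_g$ sits inside its own ambient $\Sym^{g}C$ and is not a symmetric power of a curve. Pushforward from an arbitrary closed subscheme is of course not injective in general, so this needs a real argument; the appeal to a generic $\PR^1$-bundle structure over $W^1_g\cong W_{g-2}$ plus ``induction on $g$'' is not formulated as a statement one could actually induct on, and the degeneration over the deeper loci $W^r_g$ is exactly where such an induction would have to do its work. As written, you have reduced the theorem to an unproved assertion of essentially the same depth.

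Two further remarks. First, the theorem as stated in the paper takes $\CH_d(\Th)$ to be Fulton's \emph{operational} Chow groups of the singular variety $\Th$ (this is declared in the Notations and again before the proof), whereas Bloch's localization sequence involves the covariant cycle-theoretic Chow groups of $\Th$; these need not coincide, so even a completed version of your argument would be proving a differently interpreted statement. Second, the paper's own proof avoids localizing at $\Th\subset J(C)$ altogether: it uses the operational pullback $q_{\Th}^*$ along the birational map $\Sym^{g-1}C\to\Th$ to write $j_*=q_*\circ j_{C*}\circ q_{\Th}^*$, proves $q_{\Th}^*$ is injective modulo torsion by a separate analysis of cycles supported on the exceptional locus, applies Collino to $j_{C*}$, and then descends via the open locus where $\Sym^{g}C\to J(C)$ is an isomorphism. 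The issue you run into with $G^1_g$ is the counterpart of the exceptional-locus lemma there, so it cannot be bypassed; it must be confronted in some form.
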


Since $\Th$ is a singular variety $\CH_d(\Th)$ will denote Fulton's operational Chow groups. In particular, pullback morphisms on these groups are defined induced by arbitrary morphisms $X\rightarrow \Th$. Note that operational Chow groups of a smooth variety are the same as the usual Chow groups.

\begin{proof}
It is well known that the map from $\Sym^{g-1} C$ to $\Th$  is surjective and birational. Let us fix a point $P$ in $C$. Consider the following map $j_C$ from $\Sym^{g-1}C$ to $\Sym^g C$
defined by
$$
P_1+\cdots+ P_{g-1}\mapsto P_1+\cdots+P_{g-1}+P\;.
$$
Here the sum denotes the unordered set of points of lengths $(g-1)$ and $(g)$.

With this definition of $j_C$ we observe that the following diagram is commutative.

$$
  \diagram
  \Sym^{g-1} C \ar[dd]_-{j_C} \ar[rr]^-{q_{\Th}} & & \Th \ar[dd]^-{j} \\ \\
  \Sym^{g} C \ar[rr]^-{q} & & Pic^g(C)
  \enddiagram
  $$

We prove that commutativity of this diagram gives us the following formula at the level of $\CH_d$.
$$
j_*=q_*\circ j_{C*}\circ q_{\Th}^*
$$
To prove this we notice that  for a prime $k$-cycle $V$ in $\CH_d(\Th)$ we have
$$(q\circ j_C)(q_{\Th}^{-1}(V))=(j\circ q_{\Th})(q_{\Th}^{-1}(V))$$
by the commutativity of the above diagram. Now $q_{\Th}$ is surjective, so
$$
q_{\Th}(q_{\Th}^{-1}(V))=V\;.
$$

Now suppose that $E$ is the exceptional locus of $q_{\Th}$ in $\Sym^{g-1} C$ such that $q_{\Th}$ is injective on $\Sym^{g-1} C\setminus E$ into $\Th$. Now suppose $\alpha$ be a cycle class in $\CH_d(\Th)$ and it is non-zero and supported on the complement of $E$, then $q_{\Th}^*$ is non-zero due to birationality. Suppose $\alpha$ is supported on $E$ and $q_{\Th}^*(\alpha)$ is zero. Then we prove that $\alpha$ is torsion.

\begin{lemma}
Let $E$ be a closed subscheme in $\Sym^m C$. Then the closed embedding of $E$ into $\Sym^n C$ for $m\leq n$ induces a push-forward homomorphism at the level of Chow groups which has torsion kernel.
\end{lemma}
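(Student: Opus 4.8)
Having torsion kernel is equivalent to being injective after $\otimes\,\QQ$, so I will work with $\QQ$-coefficients throughout. Write $X_m=\Sym^m C$, $X_n=\Sym^n C$, fix a point $P\in C$, let $\iota\colon X_m\hookrightarrow X_n$ be the closed embedding $D\mapsto D+(n-m)P$, and let $\jmath\colon E\hookrightarrow X_m$ be the given closed embedding, so that the homomorphism in the lemma is $(\iota\circ\jmath)_*\colon CH_k(E)_{\QQ}\to CH_k(X_n)_{\QQ}$. The plan is to realise its kernel as the image of a localization boundary map for Bloch's higher Chow groups, and then to annihilate that image by means of Collino's theorem and its extension in Section~\ref{Collino}.

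First I would write the localization exact sequence of higher Chow groups for $\jmath$, with open complement $U:=X_m\setminus E$,
$$
CH_k(X_m,1)_{\QQ}\longrightarrow CH_k(U,1)_{\QQ}\xrightarrow{\ \partial\ }CH_k(E)_{\QQ}\xrightarrow{\ \jmath_*\ }CH_k(X_m)_{\QQ}\longrightarrow CH_k(U)_{\QQ}\longrightarrow 0,
$$
together with the analogous sequence for $E\hookrightarrow X_n$. Since $\iota$ is a closed, hence proper, embedding, it identifies $E$ with $\iota(E)$ and restricts to a closed embedding of $U$ into $X_n\setminus\iota(E)$, so proper pushforward along $\iota$ gives a morphism from the first sequence to the second. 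By Collino's theorem \cite[Theorem 1]{Collino} the map $\iota_*\colon CH_k(X_m)_{\QQ}\to CH_k(X_n)_{\QQ}$ is injective; a diagram chase then yields
$$
\ker\bigl((\iota\circ\jmath)_*\bigr)=\ker(\jmath_*)=\im(\partial),
$$
and the same theorem shows that this kernel is unchanged if $n$ is increased. Thus the lemma becomes the statement that $\partial$ vanishes rationally, and we may take $n=N$ for any $N\gg 0$.

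Next I would exploit that for $N\ge 2g-1$ the Abel--Jacobi morphism $\pi\colon X_N\to J:=\Pic^N C$ is a projective bundle, so $CH_\bullet(X_N)_{\QQ}$ is free over $CH_\bullet(J)_{\QQ}$ on the powers of the hyperplane class. The embedding $E\hookrightarrow X_N$ factors through $\pi^{-1}(Z)$ with $Z:=\pi(E)$, and $\pi^{-1}(Z)\to Z$ is again a projective bundle; by the projective bundle formula, the injectivity of $CH_k(E)_{\QQ}\to CH_k(X_N)_{\QQ}$ follows from that of $CH_k(E)_{\QQ}\to CH_k(\pi^{-1}Z)_{\QQ}$ and of the pushforwards $CH_j(Z)_{\QQ}\to CH_j(J)_{\QQ}$. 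I would then conclude by induction on $\dim E$, stratifying $E$ by the fibre dimension of $\pi|_E$, i.e.\ by the Brill--Noether loci $W^r\subset X_N$: each stratum is a projective-space bundle over a locally closed piece of a Brill--Noether locus, the image $Z$ of such a piece in $J$ is a translate of an image of a symmetric power of $C$, and so both residual injectivities are reduced, via repeated use of the localization sequence, to the injectivity of pushforward maps between symmetric powers of the curve and between their open subsets, on ordinary \emph{and} higher Chow groups. That is precisely the extension of Collino's theorem established in Section~\ref{Collino}.

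The crux — and the reason Section~\ref{Collino} is needed rather than \cite{Collino} alone — is this last reduction: controlling $\partial$ forces one to carry along the groups $CH_\bullet(-,1)$ of the open complements that appear, and the inductive stratification must be organised so that at every stage one is comparing a (locally closed piece of a) symmetric power, or a Brill--Noether locus, with the Jacobian, and never an unstructured subvariety. Setting up the version of Collino's theorem that survives this process is the technical heart of the argument; granting it, the rest is the formal localization-sequence bookkeeping sketched above.
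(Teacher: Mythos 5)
Your opening reduction is correct but buys nothing: once Collino's theorem makes $\iota_*\colon \CH_k(\Sym^m C)_{\QQ}\to \CH_k(\Sym^n C)_{\QQ}$ injective, the identity $\ker\bigl((\iota\circ\jmath)_*\bigr)=\ker(\jmath_*)$ is immediate (no localization sequence needed), and what remains to prove is exactly the $n=m$ case of the lemma, namely that $\jmath_*\colon \CH_k(E)_{\QQ}\to \CH_k(\Sym^m C)_{\QQ}$ is injective for an \emph{arbitrary} closed subscheme $E$. That target statement is false, so the second half of your outline cannot be completed. Concretely, let $C$ be hyperelliptic of genus $g\geq 2$ (or an elliptic curve), let $p\neq q$ be two Weierstrass points, and take $E=\{p,q\}\subset \Sym^1 C=C$. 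Then $2p\sim 2q$ but $p\not\sim q$, so $2([p]-[q])$ is a non-torsion element of $\CH_0(E)=\ZZ^{2}$ lying in $\ker(\jmath_*)$, and by your own reduction it lies in $\ker\bigl((\iota\circ\jmath)_*\bigr)$ for every $n\geq 1$. The step of your plan that actually breaks is the stratification paragraph: it silently replaces the arbitrary subscheme $E$ by the Brill--Noether stratification of $\Sym^N C$ itself. A locally closed piece of $E$ is merely \emph{contained in} a projective-space bundle over a piece of $W^r$; it is not itself such a bundle, its image in $J$ is an arbitrary constructible set rather than a translate of an image of a symmetric power, and for $N\geq 2g-1$ the stratification is trivial anyway. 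At no point does the argument use a property of $E$ that would exclude the counterexample above.

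You should also note that the paper's own proof shares nothing with your route beyond the citation of Collino. It restricts Collino's transpose correspondence $\Gamma$ of $\Sym^m C\hookrightarrow\Sym^n C$ over $E$, shows $(j\times\id)^*\Gamma=d\Delta_E+D$ with $D$ supported over $\Sym^{m-1}C\cap E$, and then runs Collino's induction on $m$ with $E$ replaced by the closed subscheme $\Sym^{m-1}C\cap E$ of the smaller symmetric power; the induction never leaves $E$, and the multiplicative factors $d,d'$ it accumulates are precisely why the conclusion is ``torsion kernel'' rather than injectivity. (The counterexample puts pressure on that proof as well --- the projection of $\pi_n^{-1}(E)$ to $C^m$ has components lying outside $\pi_m^{-1}(E)$, so $\Gamma$ is not actually supported on $\Sym^n C\times E$ --- which indicates the lemma needs additional hypotheses on $E$, satisfied by the exceptional loci to which it is applied. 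But whatever the correct hypotheses are, your argument does not invoke them anywhere, whereas the paper's at least confines the induction to subschemes of $E$.)
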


\begin{proof}

Let $E$ be a closed subscheme inside $\Sym^m C$, then we consider the embedding of $\Sym^m C$ into $\Sym^n C$. We want to prove that $j:E\to \Sym^n C$ gives rise to an injective push-forward homomorphism at the level of Chow groups. Consider the projection from $\pi_n^{-1}(E)$ to $C^m$. The elements of $\pi_n^{-1}(E)$ are of the form $(x_1,\cdots,p,\cdots,p,\cdots,x_m)$ or $(x_1,\cdots,x_m,p,\cdots,p)$. Therefore the elements of the image of the projection from $\pi_n^{-1}(E)$ will look like $(x_1,p,\cdots,x_j)$ or $(x_1,\cdots,x_m)$. So the image will be a union of disjoint Zariski closed subsets of $C^m$, one of which is $\pi_m^{-1}(E)$. So consider the correspondence $\Gamma'$ given by  the Graph of the projection from $\pi_n^{-1}(E)$ to $\Sym^m C$. Then define $\Gamma$ to be $\pi_n\times \pi_m(\Gamma')$, that will give us a correspondence on $\Sym^n C\times E$. Then by \ref{lemma1} we get that the homomorphism $\Gamma_*j_*$ is induced by the cycle $(j\times id)^*(\Gamma)$. Now we compute this cycle.
So $(j\times id)^{-1}(\Gamma)$ is nothing but
$$\{([e_1,\cdots,e_m],[e_1',\cdots,e_m'])|
([e_1,\cdots,e_m,p\cdots,p],[e_1',\cdots,e_m'])\in \Gamma\}\;.$$
That would mean the following
$(e_1,\cdots,p,\cdots,e_m)$ and $(e_1,\cdots,e_m,p,\cdots,p)$ are in $\pi_n^{-1}(E)$ and
$(e_1',\cdots,e_m')$
is in the image of the projection. So we have
$$(e_1',\cdots,e_m')=(e_1,\cdots,e_m)$$
or
$$e_i'=p$$
for some $i$. That would mean that $(j\times id)^{-1}(\Gamma)=\Delta_E\cup Y$ where $Y$ is supported on $\Sym^{m-1}C\cap E$. Arguing as in \cite{Collino} we get that
$$(j\times id)^*(\Gamma)=d\Delta_E+D$$
where $D$ is supported on $\Sym^{m-1}C\cap E$. Then consider $\rho$ to be the open immersion of the complement of $\Sym^{m-1}C\cap E$ in $E$. Since $D$ is supported on $\Sym^{m-1}C\cap E$ we get that
$$\rho^*\Gamma_*j_*(Z)=\rho^*(dZ)\;.$$
As previous consider the diagram.
$$
  \xymatrix{
   \CH_*(\Sym^{m-1}C\cap E) \ar[r]^-{j'_{*}} \ar[dd]^-{}
  &   \CH_*(E) \ar[r]^-{\rho^{*}} \ar[dd]_-{j_{*}}
  & \CH_*(X_0(m))  \ar[dd]_-{}  \
  \\ \\
    \CH_*(\Sym^{m-1}C\cap E) \ar[r]^-{j''_*}
    & \CH_*(\Sym^{n} C) \ar[r]^-{}
  & \CH^*(U)
  }
$$

$X_0(m),U$ are complement of $\Sym^{m-1}C\cap E$ in $E,\Sym^n C$ respectively. Then suppose that we have
$$j_*(z)=0$$
that gives us that
$$\rho^*\Gamma_*j_*(z)=\rho^*(dz)=0$$
so there exists some $z'$ such that $j_*'(z')=dz$. But by the above diagram we have $j''_*(z')=0$. So by induction if we assume that $j''_*$ has torsion kernel then we get that $d'z'=0$, so we have $dd'z=0$. So the kernel of the map from $\CH_*(E)\to \CH_*(\Sym^n C)$ is torsion, consequently $\CH_*(E)\to \CH_*(\Sym^m C)$ has torsion kernel.
\end{proof}

Since $\Sym^{g}C$ is a blow up of $J(C)$, we have the the inverse image $E'$ over $E$ a projective bundle. By the projective bundle formula we have $\CH_*(E)\to \CH_*(E')$ injective and by the above lemma $\CH_*(E')\to \CH_*(\Sym^{g-1}C)$ is torsion. It will follow from this that if $\alpha$ is supported on $E$ and $q_{\Th}^*(\alpha)$ is zero then $\alpha$ is torsion. So considering Chow groups with $\QQ$-coefficients we get that $q_{\Th}^*$ is injective.

Since $j_{C*}$ is injective by theorem $1$ in \cite{Collino} from ${\CH_d(\Sym^{g-1} C)}_{\QQ}$ to ${\CH_d(\Sym^g C)}_{\QQ}$, we get that $j_{C*}q_{\Th}^*$ is injective. Since $q_{\Th}^*(\alpha)$ is not zero, we get that $j_{C*}\circ q_{\Th}^*(\alpha)$ is not supported on the exceptional locus of $q$. This is because of the fact that $q_{\Th}$ is the restriction of $q$. Now consider the following fiber square,

$$
  \diagram
  \Sym^{g} C \setminus E'\ar[dd]_-{} \ar[rr]^-{} & & \Sym^g C \ar[dd]^-{q} \\ \\
  U \ar[rr]^-{} & & J(C)
  \enddiagram
  $$
where $E'$ is the exceptional locus of the map $\Sym^{g} C\to J(C)$, and $U$ be the open subscheme of $J(C)$ such that $\Sym^g\setminus E'$ is isomorphic to $U$.
This fiber square gives us the following commutative square at the level of Chow groups.
$$
  \diagram
  {\CH_d(\Sym^g C)}_{\QQ} \ar[dd]_-{q_*} \ar[rr]^-{} & & {\CH_d(\Sym^g C \setminus E')}_{\QQ}\ar[dd]^-{} \\ \\
  {\CH_d(J(C))}_{\QQ} \ar[rr]^-{} & & {\CH_d(U)}_{\QQ}
  \enddiagram
  $$
Since $j_*q_{\Th}^*(\alpha)$ is not supported on $E'$, we get that the image of $j_*q_{\Th}^*(\alpha)$ under the pull back homomorphism ${\CH_d(\Sym^g C)}_{\QQ}\to {\CH_d(\Sym^g C\setminus E')}_{\QQ}$ is nonzero. Also observe that the right vertical homomorphism above is an isomorphism, so $j_*q_{\Th}^*(\alpha)$ is mapped to some non-zero element in ${\CH_d(U)}_{\QQ}$. By the commutativity of the above diagram, we get that $q_*(j_*q_{\Th}^*(\alpha))$ is non-zero. In other words $j_*(\alpha)$ is non-zero. So $j_*$ is injective.
\end{proof}

\subsection{Finite group quotients of $J(C)$}

Now we prove that the kernel of the push-forward homomorphism from $\CH_d(D)$ to $\CH_d(K(J(C)))$ is trivial. More generally let $G$ be a finite group acting on $J(C)$, where $C$ is a smooth projective curve of genus $g$. Let $\Th$ denote the theta divisor of $J(C)$ such that $G(\Th)=\Th$. Then we prove the following.

\begin{proposition}
Let $j_G$ denote the embedding of $\Th/G$ into $J(C)/G$. Then the kernel of the push-forward homomorphism $j_{G*}$ from $\CH_d(\Th/G)_{{\mathbb Q}}$ to $\CH_d(J(C)/G)_{{\mathbb Q}}$ is trivial.
\end{proposition}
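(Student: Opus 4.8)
The plan is to deduce the statement for the quotient from Theorem \ref{prop2} by a transfer (averaging) argument, using the fact that we work with $\mathbb Q$-coefficients. Write $\pi: J(C)\to J(C)/G$ and $\pi_\Theta: \Theta\to \Theta/G$ for the quotient maps; since $G(\Theta)=\Theta$ these fit into a commutative square with the two closed embeddings $j:\Theta\hookrightarrow J(C)$ and $j_G:\Theta/G\hookrightarrow J(C)/G$. Because $G$ is a finite group and the coefficients are rational, the pushforward $\pi_*$ and the (flat, or more generally finite) pullback $\pi^*$ on $CH_d(-)_{\mathbb Q}$ satisfy $\pi_*\pi^* = |G|\cdot \mathrm{id}$, so $\pi^*$ is injective on $CH_d(J(C)/G)_{\mathbb Q}$; likewise $\pi_\Theta^*$ is injective on $CH_d(\Theta/G)_{\mathbb Q}$ (here I use Fulton's operational Chow groups for the singular $\Theta/G$, noting that $\Theta$ itself is already singular and was treated that way in Theorem \ref{prop2}).

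First I would establish the projection-type identity $\pi^* \circ j_{G*} = j_* \circ \pi_\Theta^*$ on $CH_d(\Theta/G)_{\mathbb Q}$. This follows from the fact that the square
$$
  \diagram
  \Theta \ar[dd]_-{\pi_\Theta} \ar[rr]^-{j} & & J(C) \ar[dd]^-{\pi} \\ \\
  \Theta/G \ar[rr]^-{j_G} & & J(C)/G
  \enddiagram
$$
is Cartesian as sets (the preimage of $\Theta/G$ in $J(C)$ is exactly $\Theta$, since $G(\Theta)=\Theta$), so refined Gysin/pullback compatibility for operational classes gives the displayed equality; this is the same style of computation as the formula $j_*=q_*\circ j_{C*}\circ q_\Theta^*$ used in the proof of Theorem \ref{prop2}. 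Granting this, suppose $\alpha\in CH_d(\Theta/G)_{\mathbb Q}$ lies in $\ker j_{G*}$. Then $0 = \pi^*(j_{G*}\alpha) = j_*(\pi_\Theta^*\alpha)$, so $\pi_\Theta^*\alpha\in \ker j_*$. By Theorem \ref{prop2}, $\ker j_* = 0$, hence $\pi_\Theta^*\alpha = 0$, and by injectivity of $\pi_\Theta^*$ we conclude $\alpha = 0$. Therefore $\ker j_{G*}=0$, as claimed; applying this with $G=\langle i\rangle$ gives the Kummer-variety case for $D=\Theta/\langle i\rangle$.

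The step I expect to be the main obstacle is the clean formulation and verification of the compatibility $\pi^*\circ j_{G*} = j_*\circ \pi_\Theta^*$ at the level of \emph{operational} Chow groups, since $\Theta$, $\Theta/G$ and $J(C)/G$ are all singular (only $J(C)$ is smooth). One must be careful that $\pi^*$ here is the operational pullback along the finite surjective morphism $\pi$, that $\pi_*\pi^* = |G|$ continues to hold in this generality (it does, because for finite surjective $\pi$ of degree $|G|$ between varieties one has $\pi_*\pi^*\beta = (\deg\pi)\beta$ on any cycle class, and the operational structure is compatible with this), and that base change holds for the Cartesian square above. A safe alternative, if one prefers to avoid operational subtleties, is to first resolve: replace $\Theta$ by $\mathrm{Sym}^{g-1}C$ via the birational $q_\Theta$ exactly as in Theorem \ref{prop2}, push the $G$-action up (noting $G$ acts on $\mathrm{Sym}^{g-1}C$ compatibly, after possibly passing to a finite cover as in Proposition \ref{prop1}), and run the same averaging argument there, where the relevant varieties are the smoother symmetric products handled by Collino's theorem and its extension in \S\ref{Collino}. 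Either way, once the transfer identity is in place the argument is immediate.
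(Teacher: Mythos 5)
Your proposal is correct and is essentially the paper's argument: both reduce the statement to Theorem \ref{prop2} by a transfer argument that exploits the finiteness of $G$ and the $\mathbb{Q}$-coefficients, the paper via Fulton's identification $\CH_d(\Th/G)_{\mathbb{Q}}=\CH_d(\Th)^G_{\mathbb{Q}}$ (Example 1.7.6) together with the $G$-equivariance of $j_*$, and you via the equivalent facts that $\pi_\Theta^*$ is injective (since $\pi_{\Theta*}\pi_\Theta^*=|G|$) and that $\pi^*\circ j_{G*}=j_*\circ\pi_\Theta^*$. The only difference is packaging: the image of $\pi_\Theta^*$ is exactly the $G$-invariants, so your base-change identity plays the role of the paper's assertion that $j_{G*}$ is the restriction of $j_*$ to $\CH_d(\Th)^G_{\mathbb{Q}}$.
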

\begin{proof}
By theorem \ref{prop2}, it suffices to check that the action of $G$ intertwines with $j_*$. That is we have to show that
$$
g.j_*(a)=j_*(g.a)
$$
for any $a$ in $\CH_d(\Th)$ and for any $g\in G$. For that write $a$ as $\sum n_i V_i$. Then
$$
g.(j_*(a))=g.(\sum n_i j(V_i))=\sum n_i g(V_i)
$$
since $j$ is a closed embedding, we have
$$
\sum n_i g(V_i)=j_*(\sum n_i g(V_i))
$$
that is same as
$$
j_*(g.a)\;.
$$
By \cite[Example 1.7.6]{Fulton},  we have
$$
\CH_d(\Th/G)_{{\mathbb Q}}={\CH_d(\Th)^G}_{{\mathbb Q}}
$$
where $\CH_d(\Th)^G_{{\mathbb Q}}$ denotes the $G$-invariants in $\CH_d(\Th)_{{\mathbb Q}}$. By the above intertwining of the group action of $G$, we get that ${j_*}|_{\CH_d(\Th)^G_{{\mathbb Q}}}$ takes it values in $\CH_d(J(C))^G_{{\mathbb Q}}$. Since $j_*$ is injective we get that ${j_*}|_{\CH_d(\Th)^G_{{\mathbb Q}}}$ is injective and
${j_*}|_{\CH_d(\Th)^G_{{\mathbb Q}}}$ is nothing but $j_{G*}$.  So we get that $j_{G*}$ is injective.
\end{proof}

\section{Special ample smooth divisors on $J(C)$}

Let $n\Theta$ denote the $n$-th multiple of $\Th_C$, that is
$$\Theta+\cdots+\Theta$$
$n$ times, inside the Jacobian of a genus $g$ smooth projective curve $C$.
Since $h^0(n\Th_C)=n^g$, we can choose $H_C$, a smooth, irreducible, ample divisor on $J(C)$, linearly equivalent to $n\Th$. We are interested to investigate the kernel of the push-forward homomorphism at the level of Chow groups with rational coefficients, induced by the closed embedding of $H_C$ into $J(C)$.
Consider a Galois covering
$$\pi: \wt{C}\lra C$$
of degree $n$ branched along $r$ points where $r\geq 1$. In particular let $G$ be a finite group acting on $\wt{C}$ such that $C=\wt{C}/G$.

Let $\pi^*$ denote the morphism induced by $\pi$ from $J(C)$ to $J(\wt{C})$. Since $\pi^*$ is injective by \cite[Corollary 11.4.4]{BL}, we identity the image of $\pi^*$ with the polarized pair $(J(C), H_C)$.
 Let us denote genus of $\tilde{C}$ by $\wt{g}$. Note that for a general translate of $\Th_{\wt{C}}$, the restriction of the translate to $J(C)$ is smooth and irreducible. Since by \cite[Lemma 12.3.1]{BL},
$$
(\pi^*)^*(\Th_{\wt{C}})\equiv n\Th_C\equiv H_C\;,
$$
we have $H_C$ is equal to $J(C)\cap \Th_{\wt{C}}$, and it is smooth and irreducible.

Note that $H_C$ is special in the linear system  $|n\Th_C|$ since it is restriction of $\Th_{\tilde{C}}$ and for a general member of $n\Th_C$, this does not happen.

Denote $CH_*(H_C)_{\mathbb Q}:= CH_*(H_C)\otimes {\mathbb Q}$ and $CH_*(J(C))_{\mathbb Q}:= CH_*(J(C))\otimes {\mathbb Q}$. In the following, we identify $Pic^g(C)= J(C)$ and $Pic^{\tilde{g}}(\tilde{C})= J(\tilde{C})$ (without specifying a choice of base point).

\begin{theorem}
Let $C$ be a curve of genus $g$ and $H_C$ be as mentioned above. Let $j_C$ denote the closed embedding of  $H_C$  inside $J(C)$. Then the kernel of the push-forward homomorphism $j_{C*}$ from ${\CH_d(H_C)}_{{\mathbb Q}}$ to $\CH_d(J(C))_{{\mathbb Q}}$ is trivial, for $k\geq 1$.
\end{theorem}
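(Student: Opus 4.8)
The plan is to reduce the statement about $H_C \subset J(C)$ to the already-established Theorem \ref{prop2} (injectivity for $\Theta \hookrightarrow J(\wt C)$) by exploiting the Galois action of $G$ on $J(\wt C)$ and on $\Theta_{\wt C}$. Recall that $\pi^* : J(C)\hookrightarrow J(\wt C)$ identifies $J(C)$ with an abelian subvariety, and that $H_C = J(C)\cap \Theta_{\wt C}$ is smooth and irreducible with $H_C \equiv n\Theta_C$. The key geometric input is that, after translating $\Theta_{\wt C}$ appropriately, the subvariety $J(C)\subset J(\wt C)$ can be described as a connected component of the $G$-fixed locus $J(\wt C)^G$ (or more precisely the image of $\pi^*$ sits inside the fixed locus of the induced $G$-action on $J(\wt C)$), so that $H_C$ is a component of $\Theta_{\wt C}^G = \Theta_{\wt C}\cap J(\wt C)^G$. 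This is where the phrase ``$G$-fixed subvarieties of the Jacobian of $\wt C$'' in the introduction comes in.

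First I would set up the $G$-equivariant structure: the action of $G$ on $\wt C$ induces an action on $J(\wt C)$ fixing (a translate of) $\Theta_{\wt C}$, and $J(C) = \pi^* J(C)$ lands in the fixed locus. Second, I would use the localization sequence for Bloch's higher Chow groups, stratifying $J(\wt C)$ (respectively $\Theta_{\wt C}$) by the components of the various fixed loci $J(\wt C)^H$ for subgroups $H\leq G$, together with the open complements, and compare the localization sequences for $\Theta_{\wt C}$ and for $J(\wt C)$ term by term via the pushforward $j_*$. Third, on the open part $J(\wt C)\setminus (\text{fixed loci})$ the $G$-action is free, so Chow groups of the quotient are the $G$-invariants of the Chow groups upstairs (as in \cite[Example 1.7.6]{Fulton}), and injectivity of $j_*$ there follows from Theorem \ref{prop2} by taking $G$-invariants, exactly as in the finite-group-quotient Proposition above. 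Fourth, by Noetherian induction on the strata (using the extension of Collino's theorem from \S\ref{Collino} for pushforwards on higher Chow groups of open subsets of symmetric powers — needed because the strata are birational to such opens after pulling back along $\Sym^{\wt g - 1}\wt C \to \Theta_{\wt C}$), the injectivity propagates from the closed fixed-point strata and the open free stratum to all of $H_C$, giving $\ker j_{C*} = 0$ with $\QQ$-coefficients.

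The main obstacle, and the step that requires genuine care, is the compatibility of the pushforward maps with the localization sequences of higher Chow groups across the $G$-fixed stratification: one needs the long exact localization sequences for $(\Theta_{\wt C}, H_C, \Theta_{\wt C}\setminus H_C)$ and for $(J(\wt C), J(C), J(\wt C)\setminus J(C))$ to fit into a ladder diagram compatible with $j_{\wt C*}$, $j_{C*}$, and the pushforward on the open complements, and then to run a diagram chase (the five-lemma type argument) to deduce injectivity of $j_{C*}$ from that of $j_{\wt C*}$ together with injectivity on the open complement. The subtlety is that the open complements are not themselves Jacobians or symmetric powers, so the inductive hypothesis must be phrased for open subsets of symmetric powers of curves — precisely the content of the extension of Collino's theorem proved in \S\ref{Collino} — and one must check that $q_{\Theta}^*$ and the blow-up comparison ($\Sym^{\wt g}\wt C \to J(\wt C)$) behave well relative to the $G$-action and the stratification, i.e. that passing to $G$-invariants commutes with all the maps in sight.
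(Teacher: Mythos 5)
Your proposal assembles the right ingredients (the square relating $H_C\subset J(C)$ to $\Theta_{\wt{C}}\subset J(\wt{C})$, localization for higher Chow groups, $G$-invariants, and the extension of Collino's theorem to open subsets), but the ladder you set up is transposed relative to the one that works, and the chase has a gap at the decisive spot. In your ladder the rows are the localization sequences of $H_C\subset\Theta_{\wt{C}}$ and $J(C)\subset J(\wt{C})$, and the vertical maps are pushforwards along $\Theta_{\wt{C}}\hookrightarrow J(\wt{C})$ restricted to the fixed loci and their complements. To extract injectivity of the middle vertical map $\CH_d(H_C)_{\QQ}\to\CH_d(J(C))_{\QQ}$ by the four lemma you need, besides injectivity of $j_{\wt{C}*}$ (Theorem \ref{prop2}) and injectivity of the pushforward $\CH_d(\Theta_{\wt{C}}\setminus H_C,1)_{\QQ}\to\CH_d(J(\wt{C})\setminus J(C),1)_{\QQ}$, the \emph{surjectivity} of $\CH_d(\Theta_{\wt{C}},1)_{\QQ}\to\CH_d(J(\wt{C}),1)_{\QQ}$. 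Neither of the last two is available: Corollary \ref{openCollino} concerns open subsets of symmetric powers, not of $\Theta_{\wt{C}}$ or $J(\wt{C})$, and the surjectivity on $\CH_d(\cdot,1)$ is a Lefschetz-type statement for higher Chow groups of the theta divisor that is essentially as strong as the conjectures in the introduction. Your appeal to Theorem \ref{prop2} ``after taking $G$-invariants'' also does not land on $J(C)$: $\CH_d(J(\wt{C}))^G_{\QQ}=\CH_d(J(\wt{C})/G)_{\QQ}$, and $J(\wt{C})/G$ is a $\wt{g}$-dimensional quotient, not the $g$-dimensional abelian subvariety $\pi^*J(C)$.

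The paper's proof arranges the ladder the other way around: the rows are the localization sequences of the preimages $H_C'\subset\Sym^{\wt{g}-1}\wt{C}$ and $J(C)'\subset\Sym^{\wt{g}}\wt{C}$, and the vertical maps are induced by the base-point inclusion $\Sym^{\wt{g}-1}\wt{C}\hookrightarrow\Sym^{\wt{g}}\wt{C}$. After passing to $G$-invariants the ambient spaces become $\Sym^{\wt{g}-1}C$ and $\Sym^{\wt{g}}C$, which are both projective bundles over $J(C)$, so the vertical map on $\CH_d(\cdot,1)_{\QQ}$ of the ambient spaces is an isomorphism by the projective bundle formula --- this supplies exactly the surjectivity your chase is missing --- while the vertical maps on the ambient spaces and on the open complements are injective by Collino's theorem and Corollary \ref{openCollino} respectively. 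The chase then yields injectivity of $\CH_d(H_C')_{\QQ}\to\CH_d(J(C)')_{\QQ}$, and one descends to $H_C\subset J(C)$ by the birationality/exceptional-locus argument of Theorem \ref{prop2}. Without this transposition (or an independent proof that $j_{\wt{C}*}$ is surjective on $\CH_d(\cdot,1)_{\QQ}$), your argument does not close.
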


\begin{proof}

By the above discussion we have the following commutative diagram
  $$
  \diagram
   H_C\ar[dd]_-{j_C} \ar[rr]^-{} & & \Th_{\wt{C}} \ar[dd]^-{j_{\wt{C}}} \\ \\
  J(C) \ar[rr]^-{} & & J(\wt{C}).
  \enddiagram
  $$
This diagram gives us the following commutative diagram at the level of $\CH_*$.
  $$
  \diagram
   \CH_d(H_C)_{{\mathbb Q}}\ar[dd]_-{j_{C*}} \ar[rr]^-{} & & \CH_d(\Th_{\wt{C}})_{{\mathbb Q}} \ar[dd]^-{j_{\wt{C}*}} \\ \\
  \CH_d(J(C))_{{\mathbb Q}} \ar[rr]^-{} & & \CH_d(J(\wt{C}))_{{\mathbb Q}}
  \enddiagram
  $$
Using \ref{prop2} we get that $j_{\wt{C}*}$ is injective. To prove that the homomorphism $j_{C*}$ is injective we use the localization exact sequence of Bloch's higher Chow groups \cite{Bloch}. First note that $\Sym^{\wt{g}-1} \wt{C}$ is birational to $\Th_{\wt{C}}$, and $\Sym^{\wt{g}} \wt{C}$ is birational to $J(\wt{C})$. Consider the natural morphism from $\Sym^{\wt{g}}(\wt{C})$ to $J(\wt{C})$. Let $H_C',J(C)'$ denote the scheme theoretic inverse images of $H_C,J(C)$ in $\Sym^{\wt{g}} \wt{C}$. Now fix a base-point $P_0$ in $\wt{C}$ and we consider the inclusion $\Sym^{\wt{g}-1}\wt{C}\hookrightarrow \Sym^{\wt{g}}\wt{C}$ given by
$$
P_1+\cdots+P_{\wt{g}-1}\mapsto P_1+\cdots+P_{\wt{g}-1}+P_0\;.
$$
Then by using the localization exact sequence at the level of higher Chow groups we have the following commutative diagram:

$$
  \xymatrix{
   \CH_d(\Sym^{\wt{g}-1} \wt{C},1)\ar[r]^-{}\ar[dd]_-{}  & \CH_d(\Sym^{\wt{g}-1} \wt{C}\setminus H_C',1) \ar[r]^-{} \ar[dd]_-{}
  &   \CH_d(H_C') \ar[r]^-{} \ar[dd]_-{}
  & \CH_d(\Sym^{\wt{g}-1} \wt{C})\ar[dd]_-{}\
  \\ \\
  \CH_d(\Sym^{\wt{g}} \wt{C},1) \ar[r]^-{} & \CH_d(\Sym^{\wt{g}} \wt{C}\setminus J(C)',1) \ar[r]^-{}
    & \CH_d(J(C)') \ar[r]^-{}
  & \CH_d(\Sym^{\wt{g}} \wt{C})
  }
$$
Since $C=\wt{C}/G$, consider the  induced action of $G$ on symmetric powers of $\wt{C}$.
The group $G$ acts component wise and we have $\Sym^{\wt{g}-1} \wt{C}/G$ is isomorphic to $\Sym^{\wt{g}-1} C$ and similarly $\Sym^{\wt{g}} \wt{C}/G$ is isomorphic to $\Sym^{\wt{g}} C$. Since $G$ acts trivially on $C$, we get that $G$ acts trivially on $J(C)'$ and $H_C'$ respectively. So $G$ acts trivially on $\CH_d(H_C')$ and $\CH_d(J(C)')$. Now consider the $G$-invariant part of the above $G$-equivariant commutative diagram with ${\mathbb Q}$-coefficients. That is consider
$$
  \xymatrix{
   \CH_d(\Sym^{\wt{g}-1} \wt{C},1)^G_{{\mathbb Q}}\ar[r]^-{}\ar[dd]_-{}  & \CH_d(\Sym^{\wt{g}-1} \wt{C}\setminus H_C',1)^G_{{\mathbb Q}} \ar[r]^-{} \ar[dd]_-{}
  &   \CH_d(H_C')^G_{{\mathbb Q}} \ar[r]^-{} \ar[dd]_-{}
  & \CH_d(\Sym^{\wt{g}-1} \wt{C})^G_{{\mathbb Q}}\ar[dd]_-{}\
  \\ \\
  \CH_d(\Sym^{\wt{g}} \wt{C},1)^G_{{\mathbb Q}} \ar[r]^-{} & \CH_d(\Sym^{\wt{g}} \wt{C}\setminus J(C)',1)^G_{{\mathbb Q}} \ar[r]^-{}
    & \CH_d(J(C)')^G_{{\mathbb Q}} \ar[r]^-{}
  & \CH_d(\Sym^{\wt{g}} \wt{C})^G_{{\mathbb Q}}
  }
$$
and it becomes
$$
  \xymatrix{
   \CH_d(\Sym^{\wt{g}-1} C,1)_{{\mathbb Q}}\ar[r]^-{}\ar[dd]_-{}  & \CH_d(\Sym^{\wt{g}-1} C\setminus H_C',1)_{{\mathbb Q}} \ar[r]^-{} \ar[dd]_-{}
  &   \CH_d(H_C')_{{\mathbb Q}} \ar[r]^-{} \ar[dd]_-{}
  & \CH_d(\Sym^{\wt{g}-1} C)_{{\mathbb Q}}\ar[dd]_-{}\
  \\ \\
  \CH_d(\Sym^{\wt{g}} C,1)_{{\mathbb Q}} \ar[r]^-{} & \CH_d(\Sym^{\wt{g}}C \setminus J(C)',1)_{{\mathbb Q}} \ar[r]^-{}
    & \CH_d(J(C)')_{{\mathbb Q}} \ar[r]^-{}
  & \CH_d(\Sym^{\wt{g}} C)_{{\mathbb Q}}
  }
$$
The formula in \cite[Example 1.7.6]{Fulton} also holds for the higher Chow groups.

Since $C$ is of genus $g$ and $\Sym^{\wt{g}-1} C,\Sym^{\wt{g}} C$ are of dimension $\wt{g}-1,\wt{g}$ respectively, we get that $\Sym^{\wt{g}-1} C,\Sym^{\wt{g}} C$ are projective bundles $\PR^{\wt{g}-g-1}_{J(C)},\PR^{\wt{g}-g}_{J(C)}$ respectively, that is a $\PR^{\wt{g}-g-1},\PR^{\wt{g}-g}$ bundle respectively and $\tilde{g}\geq g+1$. So by the projective bundle formula as in \cite{Bloch} we have, when $d=1$,
$$
\CH_1(\PR^{\wt{g}-g-1}_{J(C)},1)_{{\mathbb Q}}=H^{\wt{g}-g-2}.\CH_0(J(C),1)_{{\mathbb Q}}\oplus H^{\wt{g}-g-1}.\CH_1(J(C),1)_{{\mathbb Q}}
$$
and
$$
\CH_1(\PR^{\wt{g}-g}_{J(C)},1)_{{\mathbb Q}}=H^{\wt{g}-g-1}.\CH_0(J(C),1)_{{\mathbb Q}}\oplus H^{\wt{g}-g}.\CH_1(J(C),1)_{{\mathbb Q}}
$$
where $H$ denote the class of the line bundle $\bcO_{\PR_{J(C)}}(1)$ in $\Pic(\PR_{J(C)})$
and we have
$$
\CH_1(\PR^{\wt{g}-g-1}_{J(C)})_{{\mathbb Q}}\,\simeq \,\CH_1(\PR^{\wt{g}-g}_{J(C)})_{{\mathbb Q}}.
$$

Similarly, we could apply projection bundle formula for all $k\geq 0$, to deduce isomorphisms.

 By Corollary \ref{openCollino} in section \ref{Collino}, proved for higher Chow groups with ${\mathbb Q}$-coefficients, we have that the homomorphism
$$
\CH_d(\Sym^{\wt{g}-g-1} C\setminus H_C',1)_{{\mathbb Q}}\rightarrow \CH_d(\Sym^{\wt{g}-g} C\setminus J(C)',1)_{{\mathbb Q}}
$$
 is injective. Also by Collino's theorem in \cite[Theorem 1]{Collino} we have that the homomorphism $$
\CH_d(\Sym^{\wt{g}-g-1}C)_{{\mathbb Q}} \rightarrow \CH_d(\Sym^{\wt{g}-g}C)_{{\mathbb Q}}
$$
 is injective. Now suppose that we start with some non-zero element in $\CH_d(H_C')_{{\mathbb Q}}$, and suppose that it goes to something non-zero in $\CH_d(\Sym^{\wt{g}-g-1}C)_{{\mathbb Q}}$, since the homomorphism from
$\CH_d(\Sym^{\wt{g}-g-1}C)_{{\mathbb Q}}$ to $\CH_d(\Sym^{\wt{g}-g}C)_{{\mathbb Q}}$ is injective, we get that the non-zero element we started with in $\CH_d(H_C')_{{\mathbb Q}}$ goes to some non-zero element in $\CH_d(J(C)')_{{\mathbb Q}}$. Now suppose that the element that we choose from $\CH_d(H_C')_{{\mathbb Q}}$ goes to zero under the homomorphism $\CH_d(H_C')_{{\mathbb Q}}$ to $\CH_d(\Sym^{\wt{g}-g-1}C)_{{\mathbb Q}}$. Then by the localization exact sequence, it follows that the element in $\CH_d(H_C')_{{\mathbb Q}}$ is in the image of the homomorphism
$$
\CH_d(\Sym^{\wt{g}-g-1} C\setminus H_C',1)_{{\mathbb Q}}\to \CH_d(H_C')_{{\mathbb Q}}\;.
$$
Suppose the element in $\CH_d(\Sym^{\wt{g}-g-1} C\setminus H_C',1)_{{\mathbb Q}}$ is nonzero. Since the map
$$
\CH_d(\Sym^{\wt{g}-g-1} C\setminus H_C',1)_{{\mathbb Q}}\rightarrow \CH_d(\Sym^{\wt{g}-g} C\setminus J(C)',1)_{{\mathbb Q}}
$$
 is injective, the image of that element in $\CH_d(\Sym^{\wt{g}-g} C\setminus J(C)',1)_{{\mathbb Q}}$ is non-zero. Either this element goes to zero or it is mapped to a nonzero element in $\CH_d(J(C)')_{{\mathbb Q}}$. If it goes to a nonzero element, then the element we started with from $\CH_d(H_C')_{{\mathbb Q}}$ goes to a non-zero element in $\CH_d(J(C)')_{{\mathbb Q}}$. Suppose the element in $\CH_d(\Sym^{\wt{g}-g} C\setminus J(C)',1)_{{\mathbb Q}}$ goes to zero in $\CH_d(J(C)')_{{\mathbb Q}}$. Then the element is in the image of the map
$$
\CH_d(\Sym^{\wt{g}-g} C,1)_{{\mathbb Q}}\to \CH_d(\Sym^{\wt{g}-g} C\setminus J(C)',1)_{{\mathbb Q}}\;.
$$
Then by using the isomorphism $\CH_d(\Sym^{\wt{g}-g-1} C,1)_{{\mathbb Q}}$ with $\CH_d(\Sym^{\wt{g}-g} C,1)_{{\mathbb Q}}$, we get that this element, comes from an element in $\CH_d(\Sym^{\wt{g}-g-1} C,1)_{{\mathbb Q}}$, then composing the two maps
$$\CH_d(\Sym^{\wt{g}-g-1} C,1)_{{\mathbb Q}}\to \CH_d(\Sym^{\wt{g}-g-1} C\setminus H_C',1)_{{\mathbb Q}}\to \CH_d(H_C')_{{\mathbb Q}}
$$
we get that the element we started with in $\CH_d(H_C')_{{\mathbb Q}}$ is zero, which is a contradiction to the fact that we started with a non-zero element from $\CH_d(H_C')_{{\mathbb Q}}$. So we prove that the map from $\CH_d(H_C')_{{\mathbb Q}}$ to $\CH_d(J(C)')_{{\mathbb Q}}$ is injective.

Now $H_C'$ is birational to $H_C$ and $J(C)'$ is birational to $J(C)$. So we have the commutative diagram
$$
  \diagram
   \CH_d(H_C')_{{\mathbb Q}}\ar[dd]_-{} \ar[rr]^-{} & & \CH_d(J(C)')_{{\mathbb Q}} \ar[dd]^-{} \\ \\
  \CH_d(H_C)_{{\mathbb Q}} \ar[rr]^-{j_{C*}} & & \CH_d(J(C))_{{\mathbb Q}}
  \enddiagram
  $$
Then arguing as in proposition \ref{prop2} and noting that the support of a cycle on $H_C'$ does not lie on the exceptional locus of the birational map from $J(C)'$ to $J(C)$, we prove that the homomorphism $j_{C*}$ at the level of Chow groups of $k$-cycles with rational coefficients is injective.
\end{proof}

Now we want to prove the following.

\begin{proposition}
Let $H_C$ be as in the previous theorem. Then the push-forward $B_*(H_C)$ to $B_*(J(C))$ is injective, where $B_*$ denote the group of algebraic cycles modulo algebraic equivalence.
\end{proposition}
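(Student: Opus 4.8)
The plan is to run the proof of the preceding theorem with the group $B_*$ of cycles modulo algebraic equivalence in place of $\CH_*(-)_{\QQ}$. Since $B_d(X)=\CH_d(X)/A_d(X)$ is a quotient of the Chow group, proper push-forward, pull-back along open immersions, the blow-up exact sequence, the projective bundle formula, and the action of correspondences all pass to $B_*$; moreover a cycle algebraically equivalent to zero pushes forward to one algebraically equivalent to zero, so $j_{C*}$ induces $j_{C*}\colon B_d(H_C)\to B_d(J(C))$. (For $d=0$ and $d=\dim H_C$ injectivity is immediate, from the degree and from ampleness of $H_C$ respectively, so the content is in the intermediate degrees.)

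The first ingredient is the analogue of Theorem \ref{prop2} for algebraic equivalence, that $j_*\colon B_d(\Th)\to B_d(J(C))$ is injective. The proof of Theorem \ref{prop2} uses only: that $\Sym^{g-1}C\to\Th$ and $\Sym^{g}C\to J(C)$ are birational with $\Sym^{g}C$ a blow-up of $J(C)$ whose exceptional divisor is a projective bundle; the projective bundle formula; Collino's injectivity $\CH_d(\Sym^{g-1}C)\to\CH_d(\Sym^{g}C)$; and the fibre-square comparison with the open $U\subset J(C)$ on which $q_*$ is an isomorphism. All of this passes to $B_*$ once one has Collino's theorem for algebraic equivalence, i.e.\ that $B_d(\Sym^{m}C)\to B_d(\Sym^{n}C)$ is injective for $m\le n$. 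Collino's argument produces a correspondence $\Gamma$ with $\Gamma_*\circ j_{C*}=d\cdot\mathrm{id}+D_*$, where $D$ is supported on $\Sym^{m-1}C$; as correspondences act on $B_*$, induction on $m$ (with $\Sym^{0}C$ a point) shows the kernel is killed by a power of $d$, and one then concludes provided $B_*$ of the symmetric powers in play is torsion free (each is a projective bundle over an abelian variety, resp.\ a subvariety of $\Pic(C)$, where torsion-freeness reduces to the extreme degrees $\ZZ$ and $\mathrm{NS}$). The extension of Collino's theorem to open subsets of symmetric powers (Corollary \ref{openCollino}) is again a correspondence computation and holds likewise for $B_*$.

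With these in hand one repeats the diagram chase of the preceding theorem: starting from the closed embeddings $H_C'\hookrightarrow\Sym^{\wt{g}-1}\wt{C}$, $J(C)'\hookrightarrow\Sym^{\wt{g}}\wt{C}$ and the induced $G$-action, one forms the commutative ladder of localization sequences, passes to $G$-invariants (using $\Sym^{\bullet}\wt{C}/G=\Sym^{\bullet}C$ and the analogue for $B_*$ of \cite[Example 1.7.6]{Fulton}, valid also on the open pieces), identifies the outer columns via the projective bundle formula, and pushes a nonzero class in $B_d(H_C')$ through to a nonzero class in $B_d(J(C)')$ using $B_*$-Collino injectivity on the closed symmetric powers and on their open complements. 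One then descends from $H_C'$, $J(C)'$ to $H_C$, $J(C)$ as in Theorem \ref{prop2}, noting that a cycle coming from $H_C'$ is not supported on the exceptional locus of $J(C)'\to J(C)$.

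The main obstacle is the localization step: algebraic equivalence carries no Bloch-type higher group $B_d(-,1)$, so the exact sequence used in the previous proof has no literal analogue. In that proof the only role of $\CH_d(-,1)$ was, through the projective bundle formula, to furnish an isomorphism between the two rows; it should therefore suffice to establish the corresponding statements directly for $B_*$: for a closed $Z\subset X$ with complement $U$ there is the right-exact sequence $B_d(Z)\to B_d(X)\to B_d(U)\to 0$, and the relevant $U$ is here an open subset of a projective bundle, which should control the failure of left-exactness. A safer alternative is to prove the statement rationally first, deducing it from the preceding theorem once one checks (via the same ladder) that $j_{C*}$ detects algebraic triviality with $\QQ$-coefficients, and then to rule out torsion in $\ker\bigl(B_d(H_C)\to B_d(J(C))\bigr)$ using torsion-freeness of $B_*$ of the ambient abelian varieties. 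Deciding which route is cleanest, and in particular pinning down the torsion-freeness of $B_*$ for the Jacobians and symmetric powers involved, is where the real work lies.
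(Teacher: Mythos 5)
You correctly isolate the crux: the proof of the preceding theorem leans on the localization sequence for Bloch's higher Chow groups, and there is no analogue of $\CH_d(-,1)$ for cycles modulo algebraic equivalence. But your proposal stops at that point. Both escape routes you sketch are left unexecuted: the remark that the relevant open set ``is an open subset of a projective bundle, which should control the failure of left-exactness'' is not an argument, and the alternative route hinges on torsion-freeness of $B_*$ of the Jacobians and symmetric powers in all intermediate degrees, which you concede you cannot pin down and which is not established anywhere. (It is also not needed for Collino's theorem on the closed symmetric powers themselves: there the correspondence gives the diagonal with coefficient one, so the push-forward is injective integrally; the factor $d$ you introduce belongs to the variant for arbitrary closed subschemes.) So as written the argument has a genuine gap exactly at the step you flag as ``where the real work lies.''

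The paper closes this gap by a different device that makes any substitute for the higher Chow groups unnecessary. Instead of reproducing the localization ladder, it proves a base-changed Collino theorem (Lemma \ref{basechange}): for an embedding $Y\to Z$ and a morphism $\Sym^n X\to Z$, the push-forward $B_*(\Sym^m X\times_Z Y)\to B_*(\Sym^n X\times_Z Y)$ is injective. One pulls Collino's correspondence back to $(\Sym^n X\times_Z Y)\times(\Sym^m X\times_Z Y)$, checks that its restriction under $(j\times\id)^*$ is the diagonal plus a cycle supported over $\Sym^{m-1}X\times_Z Y$, and runs the induction on $m$. The only localization input is the sequence $B_*(\Sym^{m-1}X\times_Z Y)\to B_*(\Sym^m X\times_Z Y)\to B_*(X_0(m))\to 0$, i.e. exactness at the middle term, which does hold for algebraic equivalence; left-exactness, the one place where higher Chow groups were indispensable before, never enters. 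Taking $Z=J(\wt{C})$, $Y=J(C)$, $m=\wt{g}-1$, $n=\wt{g}$ yields the injectivity of $B_*(H_C')\to B_*(J(C)')$ directly, after which one descends to $B_*(H_C)\to B_*(J(C))$ by the birationality argument of Theorem \ref{prop2}, which is the one step your proposal does carry out correctly.
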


\begin{proof}

For that first we note that the Collino's argument as in \cite{Collino} goes through for $B_*$. Meaning that if we consider the closed embedding of $\Sym^m C$ into $\Sym^n C$, then the push-forward at the level of $B_*$ is injective. So let $\wt{C}$ be a curve which is a ramifield Galois cover of $C$, such that $J(C)$ is embedded in $J(\wt{C})$. Let $\theta_{\wt{C}}$ be the theta divisor of $J(\wt{C})$ and let $H_C=\theta_{\wt{C}}\cap J(C)$.
Let $H_C'$ be the inverse image of $H_C$ in $\Sym^{\wt{g-1}}(\wt{C})$ and $J(C)'$ be the inverse image of $\Sym^{\wt{g}}(\wt{C})$, where $\wt{g}$ is the genus of $\wt{C}$. First we prove that $B_*(H_C')$ to $B_*(J(C)')$ is injective. Then we consider the Cartesian square
$$
  \diagram
  H_C' \ar[dd]_-{} \ar[rr]^-{} & & J(C)' \ar[dd]^-{} \\ \\
  H_C \ar[rr]^-{} & & J(C)
  \enddiagram
  $$
argue as in proposition \ref{prop2} to get that $B_*(H_C)$ to $B_*(J(C))$ is injective. For that we prove the following lemma.

\begin{lemma}
\label{basechange}
Consider a Cartesian square
$$
  \diagram
   {Sym^n X}\times_Z Y\ar[dd]_-{} \ar[rr]^-{} & & Y \ar[dd]^-{} \\ \\
  \Sym^n X \ar[rr]^-{} & & Z
  \enddiagram
  $$
where $Y\to Z$ is an embedding then the inclusion $\Sym^m X\times_Z Y$ to $\Sym^n X\times_Z Y$. Let $j$ denote the embedding of $\Sym^m X\times_Z Y$ to $\Sym^n X\times_Z Y$ for $m\leq n$. Then $j_*$ is injective at the level of $B_*$.
\end{lemma}
\begin{proof}

Let $i$ be the embedding of $\Sym^m X\to \Sym^n X$, where $m\leq n$. Let $j$ denote the embedding of $\Sym^m X\times_Z Y\to \Sym^n X\times_Z Y$. Let $\Gamma$ be as before,
$$\Gamma=\pi_n\times \pi_n(Graph(pr_{n,m}))$$
where $pr_{n,m}$ is the projection from $X^n$ to $X^m$. $\pi_i$ is the natural morphism from $X^i$ to $\Sym^i X$. Let $\pi$ denote the projection morphism from $(\Sym^n X\times_Z Y)\times (\Sym^m X\times_Z Y)\to \Sym^n X\times \Sym^m X$. Then consider the correspondence
$$\pi^*(\Gamma)=\Gamma'$$
supported on $(\Sym^n X\times_Z Y)\times (\Sym^m X\times_Z Y)$. Arguing as in \ref{lemma1} in the previous section we can prove that $\Gamma'_*j_*$ is induced by $(j\times id)^*\Gamma'$, which is equal to $(j\times id)^*\pi^*\Gamma=(\pi\circ (j\times id))^*\Gamma$. Now we have the following commutative diagram.

$$
  \diagram
   {Sym^m X\times_Z Y}\times {\Sym^m X\times_Z Y}\ar[dd]_-{j\times id} \ar[rr]^-{\pi'} & & {\Sym^n X\times_Z Y}\times {\Sym^m X\times_Z Y} \ar[dd]^-{\pi} \\ \\
  \Sym^m X\times \Sym^m X \ar[rr]^-{i\times id} & & \Sym^n X\times \Sym^m X
  \enddiagram
  $$
So we get that
$$\pi\circ (j\times id)=(i\times id)\times \pi'$$
therefore we have that
$$(\pi\circ (j\times id))^*\Gamma=\pi'^*(i\times id)^*\Gamma\;.$$
Now
$$(i\times id)^*\Gamma=\Delta+Y_1$$
where $\Delta$ is the diagonal in $\Sym^m X\times \Sym^m X$ and $Y_1$ is supported on $\Sym^m X\times \Sym^{m-1}X$. Now we compute $\pi'^*(\Delta)$, that is
$$\{([x_1,\cdots,x_m],y)([x_1',\cdots,x_m'],y')|
[x_1,\cdots,x_m]=[x_1',\cdots,x_m']\}$$
but by the definition of fibered product we have that
$$f([x_1,\cdots,x_m])=g(y)=g(y')$$
assuming $g$ to be an embedding we get that $y=y'$. So
$$\pi'^*\Delta=\Delta_{\Sym^m X\times_Z Y}\;.$$
Now
$$\pi'^*(Y)=\{(([x_1,\cdots,x_m],y),([x_1',\cdots,x_m'],y'))\}$$
where $[x_1',\cdots,x_m']=[y_1,\cdots,y_{m-1},p]$, which means that
$$\pi'^* Y$$ is supported on
$$(\Sym^m X\times_Z Y)\times (\Sym^{m-1}X\times_Z Y)\;.$$
So
$$\pi'^*(\Delta+Y_1)=\Delta_{\Sym^m X\times_Z Y}+Y_2$$
where $Y_2$ is supported on
$$(\Sym^m X\times_Z Y)\times (\Sym^{m-1}X\times_Z Y)\;.$$
Now consider
$$\rho:\Sym^m X\times_Z Y\setminus \Sym^{m-1}X\times_Z Y\to \Sym^m X\times_Z Y;,$$
then
$$\rho^*\Gamma'_*j_*(V)=\rho^*(V\times \Sym^m X\times_Z Y.(\Delta_{\Sym^m X\times_Z Y}+Y_2))=\rho^*(V+V_1)=\rho^*(V)\;.$$
Here $V_1$ is supported on $\Sym^{m-1}X\times_Z Y$.

Now  to prove that $j_*$ is injective, we apply induction on $m$. If $m=0$, then since $Y\to Z$ is an embedding we have $\Sym^m \times_Z Y$ is a point. Since $\Sym^n X\times_Z Y$ is projective we have that the inclusion of the point into $\Sym^n X\times_Z Y$ induces injective $j_*$.

Consider the following commutative diagram,

$$
  \xymatrix{
   B_*(\Sym^{m-1}X\times_Z Y) \ar[r]^-{j'_{*}} \ar[dd]_-{}
  &   B_*({\Sym^m X\times_Z Y}) \ar[r]^-{\rho^{*}} \ar[dd]_-{j_{*}}
  & B_*(X_0(m))  \ar[dd]_-{}  \
  \\ \\
   B_*(\Sym^{m-1}X\times_Z Y) \ar[r]^-{j''_*}
    & B_*(\Sym^{n} X\times_Z Y) \ar[r]^-{}
  & B_*(U)
  }
$$
Here $X_0(m)$ is the complement of $\Sym^{m-1}X\times_Z Y$ in $\Sym^m X\times_Z Y$ and $U$ is the complement of $\Sym^{m-1}X\times_Z Y$ in $\Sym^n X\times_Z Y$.

Now suppose that
$$j_*(z)=0$$
that will imply that
$$\rho^*\Gamma'_*j_*(z)=0$$
that is
$$\rho^*(z)=0\;.$$
So by the exactness of the first row we get that
$$j'_*(z')=z\;.$$
Now we have that
$$j_*\circ j'_*(z')=j''_*(z')$$
but by the induction hypothesis we have $j''_*$ is injective from $\Sym^{m-1}X\times_Z Y$ to $\Sym^n X\times_Z Y$. Therefore by the commutativity we have that
$$j''_*(z')=0$$
hence $z'=0$ consequently $z=0$. So $j_*$ is injective.
\end{proof}

\begin{lemma}
Let $Y$ be a closed  subscheme of $\Sym^n X$. Let $i$ denote the closed embedding of $\Sym^m X$ into $\Sym^n X$. Consider $j:Y\cap \Sym^m X\to Y$. Then $j_*$ is injective at the level of $B_*$.
\end{lemma}

\begin{proof}
Follows from the previous proposition with $Z=\Sym^n X$.
\end{proof}
So we get that the push-forward homomorphism from $B_*(H_C')$ to $B_*(J(C)')$ is injective. Hence arguing as in \ref{prop2} we get that $B_*(H_C)$ to $B_*(J(C))$ is injective.

\end{proof}

\section{Collino's theorem for higher Chow groups}
\label{Collino}

Let $C$ be a smooth projective curve over  an algebraically closed field. Let $\Sym^n C$ denote the $n$-th symmetric power of $C$. Let us fix a point $p$ in $C$. Consider the closed embedding $i_{m,n}$ of $\Sym^m C$ to $\Sym^n C$, given by
$$[x_1,\cdots,x_m]\mapsto [x_1,\cdots,x_m,p,\cdots,p]$$
where $[x_1,\cdots,x_m]$ denote the unordered $m$-tuple of points in $\Sym^m C$. Then the push-forward homomorphism $i_{m,n*}$ from $\CH_*(\Sym^m C)$ to $\CH_*(\Sym^n C)$ is injective as proved in \cite[Theorem 1]{Collino}. In this section we prove that the same holds for the higher Chow groups. That is the push-forward homomorphism $i_{m,n*}^s$ from $\CH_*(\Sym^m C,s)$ to $\CH_*(\Sym^n C,s)$ is injective.  To prove that we follow the approach by Collino in \cite{Collino}, the argument present here is a minor modification of the arguments in \cite{Collino}, but we write it for our convenience.

Let $\Gamma^s$ be the correspondence given by
$$\pi_n\times \pi_m(\Gamma')$$
supported on $(\Sym^m C\times_{\Spec(k)} \Delta^s)\times_{\Spec(k)}(\Sym^n C\times_{\Spec(k)}\Delta^s)$ where $\Gamma'$ is the graph of the projection $pr_{n,m}^s$ from $( C^n\times_{\Spec(k)}\Delta^s)$ to $( C^m\times_{\Spec(k)} \Delta^s)$ and $\pi_{n}$ is the natural morphism from
$C^n\times_{\Spec(k)}\Delta^s$ to $\Sym^n C\times_{\Spec(k)}\Delta^s$. Let $g^s_*$ be the homomorphism induced by $\Gamma^s$ at the level of algebraic cycles.

First we prove the following lemma.
\begin{lemma}
\label{lemma3}
The homomorphism $g^s_{*}\circ i^s_{m,n*}$ at the level of the group of algebraic cycles, is induced by the cycle $(i_{m,n}^s\times id)^*\Gamma^s$ on $(\Sym^m C\times_{\Spec(k)} \Delta^s)\times (\Sym^m C\times_{\Spec(k)}\Delta^s)$.
\end{lemma}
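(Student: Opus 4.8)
The plan is to treat Lemma~\ref{lemma3} as the higher Chow analogue of the cycle-level composition formula for correspondences already used for ordinary Chow groups in the earlier sections (the statement invoked there as Lemma~\ref{lemma1}): its entire content is the functoriality of the action of a correspondence on algebraic cycles under precomposition with the graph of a regular closed embedding, now with the simplex factor $\Delta^s$ carried along for the ride. First I would fix notation, abbreviating $A_m:=\Sym^m C\times_{\Spec(k)}\Delta^s$ and $A_n:=\Sym^n C\times_{\Spec(k)}\Delta^s$, and record the geometric input: since $C$ is a smooth curve, $\Sym^m C$ is smooth of dimension $m$ and $\Delta^s\cong\AF^s$ is smooth, so $A_m$ is smooth and the map $i^s_{m,n}\times\id\colon A_m\hookrightarrow A_n$ (which is just $i_{m,n}\times\id_{\Delta^s}$) is a regular closed embedding. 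Consequently the Gysin pull-back $(i^s_{m,n}\times\id)^*$ is defined, and on the subschemes that occur below it will be an honest operation on cycles, not merely on rational equivalence classes.

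Next I would unwind the two homomorphisms being composed. The map $i^s_{m,n*}$ is proper push-forward along a closed embedding, so a subvariety $V\subseteq A_m$ goes to the subvariety $i^s_{m,n}(V)\subseteq A_n$ counted once. The map $g^s_*$ is the homomorphism induced by the correspondence $\Gamma^s\subseteq A_n\times A_m$, that is, $W\mapsto (p_{A_m})_*\big(\Gamma^s\cdot(W\times A_m)\big)$, the product being formed inside $A_n\times A_m$ and $p_{A_m}$ the projection onto the last factor. Here I would insert the observation, read off from the description $\Gamma^s=(\pi_n\times\pi_m)(\Gamma')$ with $\Gamma'$ the graph of the coordinate projection $pr_{n,m}^s\colon C^n\times_{\Spec(k)}\Delta^s\to C^m\times_{\Spec(k)}\Delta^s$, that $\Gamma^s$ is finite over $A_m$; this finiteness is what makes the intersection products appearing in the statement and in the argument proper, hence defined on the level of cycles.

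The heart of the proof is then a single application of the projection formula for the regular embedding $\iota:=i^s_{m,n}\times\id\colon A_m\times A_m\hookrightarrow A_n\times A_m$, namely
$$
\iota_*\big(\iota^*\Gamma^s\cdot(V\times A_m)\big)=\Gamma^s\cdot\iota_*(V\times A_m)=\Gamma^s\cdot\big(i^s_{m,n}(V)\times A_m\big),
$$
an identity of cycles on $A_n\times A_m$. Pushing both ends forward along the projection $A_n\times A_m\to A_m$ onto the last factor, and using that this projection is compatible with $\iota$ (it restricts to the projection $A_m\times A_m\to A_m$), the right-hand side becomes $g^s_*\big(i^s_{m,n*}(V)\big)$ while the left-hand side becomes the value at $V$ of the homomorphism on cycles induced by $\iota^*\Gamma^s=(i^s_{m,n}\times\id)^*\Gamma^s$. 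Extending linearly over the free group of cycles then gives the lemma.

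The step I expect to carry the real weight is not the formal manipulation above but the verification that every intersection in sight is proper, so that these equalities hold for cycles and not only for cycle classes. Concretely, one must check that the loci defining $\Gamma^s\cdot\big(i^s_{m,n}(V)\times A_m\big)$ and $\iota^*\Gamma^s\cdot(V\times A_m)$ have the expected dimension; this is exactly the dimension count Collino carries out in \cite{Collino} for $s=0$, and since the factor $\Delta^s$ enters in the same way on both sides and contributes equally to source and target dimensions, the count transposes verbatim. This is why the argument here is only a minor modification of loc.\ cit.; I would write the dimension estimate out once with $\Delta^s$ displayed explicitly so that the reduction to \cite{Collino} is completely precise, and leave the rest to bookkeeping with the definitions of the operations on cycles.
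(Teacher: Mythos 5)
Your proposal is correct and follows essentially the same route as the paper: both arguments reduce the claim to a single application of the projection formula for the closed embedding $i^s_{m,n}\times\id$ together with the observation that composing this embedding with the projection onto the $\Sym^m C\times\Delta^s$ factor recovers the projection from $(\Sym^m C\times\Delta^s)\times(\Sym^m C\times\Delta^s)$. Your added remarks on the regularity of the embedding and the properness of the intersections make explicit some points the paper leaves implicit, but they do not change the underlying argument.
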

\begin{proof}
Let's denote $i_{m,n*}^s$ as $i^s_*$.
We have
$$g^s_*i^s_*(Z)=pr_{(\Sym^m C\times\Delta^s)*}(i^s_*(Z)\times \Sym^m C\times \Delta^s.\Gamma^s)\;.$$
The above expression can be written as
$$pr_{(\Sym^m C\times\Delta^s)*}((i^s\times id)_*(Z\times \Sym^m C\times \Delta^s).\Gamma^s)\;.$$
By the projection formula the above is equal to
$$pr_{(\Sym^m C\times\Delta^s)*}\circ(i^s\times id)_*
((Z\times \Sym^m C\times \Delta^s). (i^s\times id)^*\Gamma^s)\;.$$
Since $pr_{\Sym^m C\times\Delta^s}\circ(i^s\times id)$
is the projection $pr_{\Sym^m C\times\Delta^s}$ we get that the above is equal to
$$pr_{(\Sym^m C\times\Delta^s)*}((Z\times \Sym^m C\times \Delta^s). (i^s\times id)^*\Gamma^s)\;.$$
Here the above two projections are taken respectively on $(\Sym^n C\times\Delta^s)\times(\Sym^m C\times\Delta^s)$ and on $(\Sym^m C\times\Delta^s)\times(\Sym^m C\times\Delta^s)$.
So we get that $g^s_*\circ i^s_*$ is induced by $(i^s\times id)^*\Gamma^s$.

\end{proof}
Now consider a closed subscheme $W$ of $\Sym^n C$. Let $i_{m,n}$ denote the embedding of $\Sym^m C$ into $\Sym^n C$. Consider the morphism $i_{m,n}^s$ from $(\Sym^m C\setminus i_{m,n}^{-1}W)\times \Delta^s$ to $(\Sym^n C\setminus W)\times \Delta^s.$ Consider the restriction of $\Gamma^s$ to $((\Sym^n C\setminus W)\times \Delta^s) \times ((\Sym^m C\setminus i_{m,n}^{-1}W)\times \Delta^s)$. Denote it by $\Gamma^{s}{'}$. Let $g^{s}{'}_*$ denote the homomorphism induced by $\Gamma^{s}{'}$. Then arguing as in the previous lemma \ref{lemma3} we get the following.

\begin{corollary}
The homomorphism $g^{s'}_*\circ i^s_{m,n*}$ is induced  by the cycle $(i^s_{m,n}\times id)^*\Gamma^{s}{'}$ on $((\Sym^m C \setminus i_{m,n}^{-1}W)\times \Delta^s)\times ((\Sym^m C \setminus i_{m,n}^{-1}W)\times \Delta^s)$.
\end{corollary}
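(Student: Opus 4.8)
The statement is the open analogue of Lemma \ref{lemma3}, and the plan is to re-run that proof with $\Sym^m C$ and $\Sym^n C$ replaced throughout by the open subschemes $\Sym^m C\setminus i_{m,n}^{-1}W$ and $\Sym^n C\setminus W$, and $\Gamma^s$ by its restriction $\Gamma^{s}{'}$. First I would check that $\Gamma^{s}{'}$ does define a homomorphism $g^{s'}_*$ at the level of cycles: $\Gamma^s$ is the image, under the finite morphism $\pi_n\times\pi_m$, of the graph $\Gamma'$ of the relative projection $pr_{n,m}^s$, which is proper (its fibres are copies of $C^{n-m}$); hence $\Gamma^s$, and therefore every restriction of it to an open subset, is proper over the $\Sym^m C\times\Delta^s$-factor, so that the defining expression
$$
g^{s'}_* i^s_{m,n*}(Z) = pr_*\bigl(i^s_{m,n*}(Z)\times (\Sym^m C\setminus i_{m,n}^{-1}W)\times\Delta^s \,.\, \Gamma^{s}{'}\bigr)
$$
is meaningful for a cycle $Z$ on $(\Sym^m C\setminus i_{m,n}^{-1}W)\times\Delta^s$, where $pr$ is the projection of $((\Sym^n C\setminus W)\times\Delta^s)\times((\Sym^m C\setminus i_{m,n}^{-1}W)\times\Delta^s)$ onto its second factor.

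Then I would carry out verbatim the three steps of Lemma \ref{lemma3}. Write $i^s_{m,n*}(Z)\times (\Sym^m C\setminus i_{m,n}^{-1}W)\times\Delta^s$ as $(i^s_{m,n}\times id)_*$ of $Z\times (\Sym^m C\setminus i_{m,n}^{-1}W)\times\Delta^s$; apply the projection formula across the proper (in fact closed) morphism $i^s_{m,n}\times id$ to pull $\Gamma^{s}{'}$ inside the pushforward as $(i^s_{m,n}\times id)^*\Gamma^{s}{'}$; and finally observe that the composite of $i^s_{m,n}\times id$ with the projection onto $(\Sym^m C\setminus i_{m,n}^{-1}W)\times\Delta^s$ is again that projection, so the outer $(i^s_{m,n}\times id)_*$ disappears. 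What is left is precisely
$$
pr_*\bigl(Z\times(\Sym^m C\setminus i_{m,n}^{-1}W)\times\Delta^s\,.\,(i^s_{m,n}\times id)^*\Gamma^{s}{'}\bigr),
$$
that is, the homomorphism induced by the cycle $(i^s_{m,n}\times id)^*\Gamma^{s}{'}$ on $((\Sym^m C\setminus i_{m,n}^{-1}W)\times\Delta^s)\times((\Sym^m C\setminus i_{m,n}^{-1}W)\times\Delta^s)$, which is the claim.

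The one place that needs genuine care — and the step I expect to be the main obstacle — is checking that the cycle-theoretic operations used (the refined intersection with $\Gamma^{s}{'}$, the pullback $(i^s_{m,n}\times id)^*$, and the proper pushforwards) all remain legitimate on these possibly singular and non-proper loci, and in particular that properness of the relevant projection on the support of the cycle in play is not destroyed when $W$ is removed from the $\Sym^n C$-factor. As indicated above, this is controlled by the graph description of $\Gamma'$: it is finite over the $\Sym^n C$-side and proper, with projective fibres, over the $\Sym^m C$-side, and both properties are preserved under restriction to opens. Once this is granted, the projection-formula identities of \cite{Fulton} and their evident extensions to Bloch's higher Chow complexes apply word for word, and the argument coincides with that of Lemma \ref{lemma3}.
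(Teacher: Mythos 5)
Your proposal is correct and follows essentially the same route as the paper, which simply repeats the argument of Lemma \ref{lemma3} with $g^s_*$, $\Gamma^s$ replaced by $g^{s}{'}_*$, $\Gamma^{s}{'}$; your additional care about properness of the restricted correspondence over the open loci is a reasonable elaboration of what the paper leaves implicit.
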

\begin{proof}
It follows by arguing as in lemma \ref{lemma3} with $g^s_*,\Gamma^s$ replaced by $g^s{'}_*, \Gamma^s{'}$.
\end{proof}

Now let us consider the closed embedding $\Sym^{m-1}C\times \Delta^s$ into $\Sym^m C\times \Delta^s$, induced by the embedding $\Sym^{m-1}C$ into $\Sym^m C$. Let $\rho^s$ be the embedding of the complement of $\Sym^{m-1}C\times \Delta^s$ in $\Sym^m C\times \Delta^s$. Then we have the following proposition.
\begin{proposition}
\label{prop3}
At the level of the group of algebraic cycles we have

$$\rho^{s*}\circ g^s_*\circ i^s_*=\rho^{s*}\;.$$
\end{proposition}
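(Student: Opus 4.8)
The plan is to reduce the identity, via Lemma \ref{lemma3}, to an explicit description of the pullback cycle $(i^s_{m,n}\times\id)^*\Gamma^s$ on $(\Sym^m C\times\Delta^s)\times(\Sym^m C\times\Delta^s)$; this is the higher Chow analogue of the computation underlying Collino's theorem \cite{Collino}. By Lemma \ref{lemma3} the composite $g^s_*\circ i^s_{m,n*}$ is the operation on cycles induced by $(i^s_{m,n}\times\id)^*\Gamma^s$, so it suffices to prove
$$(i^s_{m,n}\times\id)^*\Gamma^s \;=\; \Delta + D ,$$
where $\Delta$ is the diagonal of $(\Sym^m C\times\Delta^s)\times(\Sym^m C\times\Delta^s)$ and $D$ is a cycle supported on $(\Sym^m C\times\Delta^s)\times(\Sym^{m-1}C\times\Delta^s)$, the second symmetric power being included in $\Sym^m C$ through $i_{m-1,m}$.

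First I would determine the set-theoretic inverse image. A point $(([x_1,\dots,x_m],u),([z_1,\dots,z_m],u'))$ belongs to $(i^s_{m,n}\times\id)^{-1}\Gamma^s$ exactly when $u=u'$ and $[z_1,\dots,z_m]$ is a sub-multiset of $[x_1,\dots,x_m,p,\dots,p]$ (with $n-m$ copies of $p$). Such a sub-multiset is either $[x_1,\dots,x_m]$ itself, which yields the diagonal, or else contains $p$, which forces $[z_1,\dots,z_m]$ into the image of $i_{m-1,m}\colon\Sym^{m-1}C\hookrightarrow\Sym^m C$. Hence $(i^s_{m,n}\times\id)^{-1}\Gamma^s=\Delta\cup Y$ with $Y\subseteq(\Sym^m C\times\Delta^s)\times(\Sym^{m-1}C\times\Delta^s)$. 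Since $\Gamma^s$ is a finite quotient of $C^n\times\Delta^s$ it has dimension $n+s$, and since $i^s_{m,n}\times\id$ is a regular closed embedding of codimension $n-m$, the pullback $(i^s_{m,n}\times\id)^*\Gamma^s$ is a well-defined cycle of dimension $m+s=\dim\Delta$, supported on $\Delta\cup Y$. Writing it as $e\,\Delta+D$ with $D$ supported on $Y$, it remains to show $e=1$.

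The main obstacle is this multiplicity-one assertion, which I would establish by an \'etale-local computation near a general point $q=([x_1,\dots,x_m],u)$ of $\Delta$, with the $x_i$ distinct and distinct from $p$. Near $[x_1,\dots,x_m,p,\dots,p]$ the smooth variety $\Sym^n C$ splits \'etale-locally as a product of $m$ small curves (around the $x_i$) with $\Sym^{n-m}C$ (around $(n-m)[p]$); in the resulting coordinates the image of $i^s_{m,n}\times\id$ is cut out near $q$ by the $n-m$ coordinates along the $\Sym^{n-m}C$-factor, while $\Gamma^s$ --- whose second $\Sym^m C$-factor is forced to equal the sub-multiset $[x_1,\dots,x_m]$ of its first factor, and whose two $\Delta^s$-factors are forced to agree --- is cut out by the $m+s$ equations identifying those coordinates. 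These $n+s$ equations are independent, so the scheme-theoretic intersection is reduced and equal to $\Delta$ near $q$, whence $e=1$. Equivalently, this is exactly Collino's computation \cite[proof of Theorem 1]{Collino} for $s=0$, and the passage to general $s$ is just base change by the affine space $\Delta^s$, which commutes with the Gysin pullback because $\Gamma^s=\Gamma\times_k\Delta^s$ and $i^s_{m,n}=i_{m,n}\times\id_{\Delta^s}$.

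Finally, granting $(i^s_{m,n}\times\id)^*\Gamma^s=\Delta+D$, I would apply this correspondence to an arbitrary cycle $Z$ on $\Sym^m C\times\Delta^s$: the diagonal acts as the identity and contributes $Z$, while $D$ contributes a cycle $D_*(Z)$ that --- since $D$ is supported on $(\Sym^m C\times\Delta^s)\times(\Sym^{m-1}C\times\Delta^s)$ --- is supported on $\Sym^{m-1}C\times\Delta^s$. As $\rho^s$ is the open immersion of the complement of $\Sym^{m-1}C\times\Delta^s$ in $\Sym^m C\times\Delta^s$, the flat pullback $\rho^{s*}$ kills $D_*(Z)$, so that $\rho^{s*}\circ g^s_*\circ i^s_{m,n*}(Z)=\rho^{s*}(Z)$, which is the claim. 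Beyond the multiplicity-one step, the remaining ingredients --- the identification of the inverse image, the projection-formula bookkeeping already carried out in Lemma \ref{lemma3}, and the vanishing after restriction to the open complement --- are routine.
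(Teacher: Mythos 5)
Your proposal is correct and follows essentially the same route as the paper: identify $(i^s_{m,n}\times\id)^{-1}\Gamma^s$ as the union of the diagonal with a locus supported over $\Sym^{m-1}C\times\Delta^s$, conclude that $(i^s_{m,n}\times\id)^*\Gamma^s=\Delta+D$, and observe that $\rho^{s*}$ kills the contribution of $D$. The only difference is that you explicitly justify the multiplicity-one coefficient on $\Delta$ by a local transversality computation (equivalently, by reducing to Collino's case $s=0$ via base change along $\Delta^s$), a point the paper asserts without further argument.
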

\begin{proof}
To prove the proposition we prove that
$$(i^s\times \id)^{-1}\Gamma^s=\Delta \cup D$$
where $\Delta $ means the diagonal in $(\Sym^{m}C\times\Delta^s)\times (\Sym^{m}C\times\Delta^s)$ and $D$ is a closed subscheme of $(\Sym^{m}C\times\Delta^s)\times (\Sym^{m-1}C\times\Delta^s)$.
For that we write out
$$(i^s\times \id)^{-1}\Gamma^s\;,$$
that is equal to
$$(i^s\times \id)^{-1}(\pi_n\times\pi_m)Graph(pr^s_{n,m})\;.$$
The above is equal to
$$(i^s\times \id)^{-1}(\pi_n\times\pi_m)
\{((x_1\cdots,x_n,\delta_s),(x_1,\cdots,x_m,\delta^s))|x_i\in C, \delta^s\in \Delta^s\}$$
that is
$$(i^s\times \id)^{-1}\{([x_1,\cdots,x_n,\delta^s],[x_1,\cdots,x_m,\delta^s])|x_i\in C,\delta^s\in \Delta^s\}\;.$$
Call the set
$$\{([x_1,\cdots,x_n,\delta^s],[x_1,\cdots,x_m,\delta^s])|x_i\in C,\delta^s\in \Delta^s\}$$
as $B$, and the set
$$(i^s\times \id)^{-1}\{([x_1,\cdots,x_n,\delta^s],[x_1,\cdots,x_m,\delta^s])|x_i\in C,\delta^s\in \Delta^s\}\;.$$
as $A$. The set $A$ is of the form
$$\{([x'_1,\cdots,x'_m,\delta^s],[y'_1,\cdots,y'_m,\delta^s])|
([x_1',\cdots,x_m',p,\cdots,p,\delta^s],[y_1',\cdots,y_m',\delta^s])\in B\}\;.$$
So the set $A$ can be written as the union of
$$\{([x_1'\cdots,x_m',\delta^s],[x_1'\cdots,x_m',\delta^s])|x_i\in C,\delta^s\in \Delta^s\}$$
and
$$\{([x_1'\cdots,x_m',\delta^s],[x_1'\cdots,p,x_m',\delta^s])|x_i\in C,\delta^s\in \Delta^s\}\;,$$
that is the union
$$\Delta\cup D$$
where $\Delta $ is the diagonal in the scheme $(\Sym^{m}C\times\Delta^s)\times (\Sym^{m}C\times\Delta^s)$ and $D$ is a closed subscheme in $(\Sym^m C\times\Delta^s)\times (\Sym^{m-1}C\times\Delta^s)\;.$
Therefore we get that
$$(i^s\times id)^*(\Gamma)=\Delta+Y$$
where $Y$ is supported on $(\Sym^m\times\Delta^s)\times (\Sym^{m-1}C\times\Delta^s)$.
So $g_*i^s_*(Z)$ is equal to
$$\pr_{\Sym^m C\times \Delta^s*}[(\Delta+Y).(Z\times \Sym^m C\times \Delta^s)]=Z+Z_1$$
where $Z_1$ is supported on $\Sym^{m-1}C\times\Delta^s$. So
$$\rho^{s*}g_*i^s_*=\rho^{s*}(Z+Z_1)=\rho^{s*}(Z)$$
since $\rho^{s*}(Z_1)=0$. Hence the proposition is proved.
\end{proof}
Now we want to run the same argument as in proposition \ref{prop3} but for open varieties. That is let $W$ be a closed subscheme in $\Sym^n C$.
Let us consider the  embedding of $(\Sym^{m-1}C\setminus i_{m-1,n}^{-1}W)\times \Delta^s$ into $(\Sym^m C\setminus i_{m,n}^{-1}W)\times \Delta^s$, induced by the embedding $\Sym^{m-1}C$ into $\Sym^m C$. Let $\rho^s{'}$ be the embedding of the complement of $(\Sym^{m-1}C \setminus i_{m-1,n}^{-1}W)\times \Delta^s$ in $(\Sym^m C \setminus i_{m,n}^{-1}W)\times \Delta^s$. Then arguing as in proposition \ref{prop3} we prove that
\begin{corollary}
\label{corollary1}
At the level of algebraic cycles we have
$$
\rho{^s{'}}{^*}\circ g^s{'}_*\circ i_{m,n*}^s=\rho{^s{'}}{^*}\;.
$$
\end{corollary}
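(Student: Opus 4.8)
The plan is to transcribe the proof of Proposition \ref{prop3} essentially word for word, carrying along the open complements. Set $V_m := \Sym^m C \setminus i_{m,n}^{-1}W$, so that $i^s_{m,n}$ restricts to a closed embedding $V_m\times\Delta^s \hookrightarrow V_n\times\Delta^s$; here one uses that the chosen embeddings $\Sym^{m-1}C \subset \Sym^m C \subset \Sym^n C$ are compatible, so that $i_{m-1,n}^{-1}W = i_{m-1,m}^{-1}(i_{m,n}^{-1}W)$ and all the preimages are taken consistently along the tower. First I would note that $\Gamma^{s'}$, being by construction the restriction of $\Gamma^s$ to $(V_n\times\Delta^s)\times(V_m\times\Delta^s)$, is cut out set-theoretically by the same graph-locus $B$ intersected with this open subset; since flat pullback along an open immersion commutes with the proper pushforward and the intersection operations that define $g^s_*$, the content of Lemma \ref{lemma3} and its corollary localizes, so $g^{s'}_*\circ i^s_{m,n*}$ is induced by the cycle $(i^s_{m,n}\times\id)^*\Gamma^{s'}$.

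The one computation to redo is $(i^s_{m,n}\times\id)^{-1}\Gamma^{s'} = \Delta\cup D$, where $\Delta$ is the diagonal of $(V_m\times\Delta^s)\times(V_m\times\Delta^s)$ and $D$ is a closed subscheme of $(V_m\times\Delta^s)\times(V_{m-1}\times\Delta^s)$. This is the same set-theoretic description used in Proposition \ref{prop3}: a point of $(i^s_{m,n}\times\id)^{-1}\Gamma^{s'}$ is a pair $([x_1',\dots,x_m',\delta^s],[y_1',\dots,y_m',\delta^s])$ such that $([x_1',\dots,x_m',p,\dots,p,\delta^s],[y_1',\dots,y_m',\delta^s])$ lies in $B$, which forces either $(y_1',\dots,y_m')=(x_1',\dots,x_m')$, giving the diagonal, or $y_i'=p$ for some $i$, giving a point whose second coordinate lies in $\Sym^{m-1}C\times\Delta^s$; the only new feature is that every point involved already lies in the open subschemes $V_m$, $V_{m-1}$, which is automatic since we are intersecting with those opens. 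Hence, exactly as in \cite{Collino}, $(i^s_{m,n}\times\id)^*\Gamma^{s'} = \Delta + Y$ with $Y$ supported on $(V_m\times\Delta^s)\times(V_{m-1}\times\Delta^s)$.

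Then for any cycle $Z$ on $V_m\times\Delta^s$ one obtains $g^{s'}_*i^s_{m,n*}(Z) = \pr_{V_m\times\Delta^s,*}[(\Delta+Y).(Z\times V_m\times\Delta^s)] = Z + Z_1$ with $Z_1$ supported on $V_{m-1}\times\Delta^s$, and applying $\rho^{s'*}$, the restriction to the complement of $V_{m-1}\times\Delta^s$, annihilates $Z_1$, which yields $\rho^{s'*}\circ g^{s'}_*\circ i^s_{m,n*} = \rho^{s'*}$ as claimed. I expect the only genuine obstacle to be the bookkeeping behind the first paragraph, namely verifying that the restriction of $\Gamma^s$ to the open set induces precisely the restriction of $g^s_*$ (i.e. that open-immersion pullback commutes with the pushforward-and-intersect construction) and that the preimages $i_{\bullet,n}^{-1}W$ are mutually compatible along the symmetric-power tower; once these compatibilities are recorded, the corollary is a line-for-line copy of Proposition \ref{prop3}.
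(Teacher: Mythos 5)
Your proposal is correct and follows the same route as the paper: the paper's own proof simply says to repeat the argument of Proposition \ref{prop3} with $g^s_*$, $\Gamma^s$ replaced by $g^{s'}_*$, $\Gamma^{s'}$, using the identity $(i^s_{m,n}\times\id)^*\Gamma^{s'}=((i^s_{m,n}\times\id)^*\Gamma^s)\cap((V_m\times\Delta^s)\times(V_m\times\Delta^s))$, which is exactly the localization compatibility you spell out in your first paragraph. Your version is merely more explicit about the bookkeeping (compatibility of the preimages $i_{\bullet,n}^{-1}W$ along the tower and the commutation of open restriction with the pushforward-and-intersect construction), which the paper leaves implicit.
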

\begin{proof}
We argue as in proposition \ref{prop4} with $g^s_{*}$ replaced by $g^s{'}_*$ and $\Gamma^s$ by $\Gamma^{s}{'}$ and noting that
$$(i_{m,n}^s\times id)^*(\Gamma^{s}{'})=((i_{m,n}^s\times id)^*\Gamma^{s})\cap ((\Sym^m C\setminus i_{m,n}^{-1}W\times\Delta^s)\times (\Sym^m C\setminus i_{m,n}^{-1}W\times \Delta^s))\;.$$
\end{proof}

Now we prove that the push-forward homomorphism $i^s_*$ from $\CH_*(\Sym^m C,s)$ to $CH_*(\Sym^n C,s)$ is injective. This involves many steps. The first step would be to verify that the push-forward homomorphism $i^s_*$ is defined at the level of higher Chow groups.
Here $\bcZ$ denotes the group of admissible cycles, as defined by S. Bloch \cite{Bloch}.

\begin{lemma}
\label{lemma1}
${i_{m,n*}^s}$ is well defined from $\CH^*(\Sym^m C,s)$ to $\CH^*(\Sym^n C,s)$.
\end{lemma}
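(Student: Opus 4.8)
The plan is to check the two conditions that upgrade the set-theoretic push-forward of cycles to a morphism of Bloch's cycle complexes: that the push-forward of an admissible cycle is again admissible, and that it commutes with the simplicial boundary operator. Granting these, $i_{m,n*}^s := (i_{m,n}\times\id_{\Delta^s})_*$ is a map of complexes $\bcZ^*(\Sym^m C,\bullet)\to\bcZ^*(\Sym^n C,\bullet)$ which preserves the dimension of cycles, and hence induces the asserted homomorphism on the homology groups $\CH^*(\Sym^m C,s)\to\CH^*(\Sym^n C,s)$ (equivalently, a map $\CH_d(\Sym^m C,s)\to\CH_d(\Sym^n C,s)$ for each $d$).

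First I would record that $i_{m,n}$ is a closed embedding, hence proper, so $i_{m,n}\times\id_{\Delta^s}$ is a closed, in particular proper, embedding of $\Sym^m C\times\Delta^s$ into $\Sym^n C\times\Delta^s$. For an irreducible admissible cycle $V\subset\Sym^m C\times\Delta^s$, proper push-forward along a closed immersion is simply the cycle of its image, $(i_{m,n}\times\id)_*[V]=[(i_{m,n}\times\id)(V)]$ with multiplicity one, which is again a prime cycle of the same dimension. For admissibility of the image, note that $(i_{m,n}\times\id)^{-1}(\Sym^n C\times\Delta^t)=\Sym^m C\times\Delta^t$ for every face $\Delta^t\subseteq\Delta^s$, so that
$$(i_{m,n}\times\id)(V)\cap(\Sym^n C\times\Delta^t)=(i_{m,n}\times\id)\bigl(V\cap(\Sym^m C\times\Delta^t)\bigr).$$
Since a closed immersion preserves dimension and $V$ meets $\Sym^m C\times\Delta^t$ properly by admissibility, the image meets $\Sym^n C\times\Delta^t$ properly; hence $(i_{m,n}\times\id)_*$ carries $\bcZ^*(\Sym^m C,s)$ into $\bcZ^*(\Sym^n C,s)$ for every $s$.

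It remains to check compatibility with the differential $\partial=\sum_j(-1)^j\partial_j^*$, where $\partial_j^*$ is restriction to the $j$-th codimension-one face, and this is the one genuinely delicate point. The square relating $i_{m,n}\times\id$ over $\Sym^\bullet C\times\Delta^s$ and over $\Sym^\bullet C\times\Delta^{s-1}$ through the face inclusions is Cartesian and Tor-independent, the face being cut out by a coordinate $t_j$ that remains a nonzerodivisor on both the source and the target; hence proper push-forward commutes with the refined Gysin map of the face, by \cite{Fulton}. On an admissible cycle this Gysin map is the naive intersection $\partial_j^*$, the admissibility hypothesis being exactly what makes that intersection proper, so that no excess-intersection term appears. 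Summing over $j$ gives $(i_{m,n}\times\id)_*\circ\partial=\partial\circ(i_{m,n}\times\id)_*$, so $i_{m,n*}^s$ is a chain map and descends to higher Chow groups. Thus the crux is the standard compatibility of proper push-forward with the boundary operator on admissible cycles; here it is especially transparent because $i_{m,n}$ is a closed immersion, so there is no excess intersection and the set-theoretic identity $(i_{m,n}\times\id)^{-1}(\Sym^n C\times\Delta^t)=\Sym^m C\times\Delta^t$ does all the work. Alternatively, one may simply invoke Bloch's general construction of proper push-forward on higher Chow groups, applied to the proper morphism $i_{m,n}$.
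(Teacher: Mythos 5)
Your argument is correct and follows essentially the same route as the paper: both verify that the push-forward along the closed immersion $i_{m,n}\times\id_{\Delta^s}$ commutes with the face maps $\partial_j^*$, so that it is a map of cycle complexes descending to higher Chow groups. You are somewhat more careful than the paper in spelling out why admissibility is preserved (via $(i_{m,n}\times\id)^{-1}(\Sym^n C\times\Delta^t)=\Sym^m C\times\Delta^t$), a point the paper only gestures at with the phrase ``induced maps on admissible cycles.''
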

\begin{proof}
The morphism $i_{m,n}$ is defined from $\Sym^m C$ to $\Sym^n C$. That will give us a morphism  $i^s_{m,n}$ from $\Sym^m C\times \Delta^s$ to $\Sym^n C\times \Delta^s$. So consider the face morphisms
$$\partial_i:\Delta^{s-1}\to \Delta^s$$
given by
$$(t_0,\cdots,t_{s-1})\mapsto (t_0,\cdots,t_{i-1},0,t_i,\cdots,t_{s-1})\;.$$
This face morphisms give rise to the morphisms from $\Sym^m C\times \Delta^{s-1}$ to $\Sym^m C\times \Delta^s$, continue to call these morphisms as $\partial_i$.
Consider the following commutative diagram
$$
  \diagram
   \Sym^m C\times \Delta^{s-1}\ar[dd]_-{i_{m,n}^{s-1}} \ar[rr]^-{\partial_i} & & \Sym^{m}C\times \Delta^s \ar[dd]^-{i_{m,n}^s} \\ \\
  \Sym^n C\times \Delta^{s-1}\ar[rr]^-{\partial_i} & & \Sym^n C\times \Delta^s
  \enddiagram
  $$
From the above commutative diagram we get the commutativity of the following diagram:
$$
  \diagram
   \bcZ^*(\Sym^m C\times \Delta^{s})\ar[dd]_-{i_{m,n*}^{s}} \ar[rr]^-{\partial_i^*} & & \bcZ^*(\Sym^{m}C\times \Delta^{s-1}) \ar[dd]^-{i_{m,n*}^{s-1}} \\ \\
  \bcZ^*(\Sym^n C\times \Delta^{s})\ar[rr]^-{\partial_i^*} & & \bcZ^*(\Sym^n C\times \Delta^{s-1})
  \enddiagram
  $$
The commutativity  of this diagram and induced maps on admissible cycles shows that $i_{m,n*}^s$ is well defined at the level of higher Chow groups.
\end{proof}

\begin{corollary}
Let $W$ be a closed subscheme of $\Sym^n C$. Consider the morphism $i$ from $\Sym^m C\setminus i^{-1}(W)$ to $\Sym^n C\setminus W$. Then the homomorphism $i_{m,n*}^s$ is well defined from $\CH^*(\Sym^m C\setminus i^{-1}(W),s)$ to $\CH^*(\Sym^n C\setminus W,s).$
\end{corollary}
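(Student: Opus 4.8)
The plan is to imitate the proof of Lemma \ref{lemma1} verbatim, after restricting every scheme in sight to the relevant open locus, using only that the restricted map stays a closed immersion.

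First I would observe that $i_{m,n}$ carries $\Sym^m C\setminus i_{m,n}^{-1}(W)$ into $\Sym^n C\setminus W$, and that the induced morphism $i=i_{m,n}|$ is again a closed immersion: the preimage of an open subscheme under a closed immersion is the corresponding open subscheme of the closed subscheme. In particular $i$ is proper, so proper push-forward of algebraic cycles $i^s_*$ is defined on $\bcZ^*\big((\Sym^m C\setminus i_{m,n}^{-1}(W))\times\Delta^s\big)$; and since a closed immersion changes neither dimension nor the cycle itself, only the ambient space, $i^s_*Z$ is just $Z$ regarded inside $(\Sym^n C\setminus W)\times\Delta^s$, supported on $i^s\big((\Sym^m C\setminus i_{m,n}^{-1}(W))\times\Delta^s\big)$.

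Next, for each face map $\partial_j\colon\Delta^{s-1}\to\Delta^s$ I would form the square analogous to the one in the proof of Lemma \ref{lemma1}, with $\Sym^m C,\Sym^n C$ replaced by $\Sym^m C\setminus i_{m,n}^{-1}(W),\ \Sym^n C\setminus W$ and with $i^s,i^{s-1}$ the restricted closed immersions; on products the vertical maps are $i\times\id$ and the horizontal maps $\id\times\partial_j$, so the square is commutative (indeed Cartesian), and the faces of $(\Sym^n C\setminus W)\times\Delta^s$ meet the image $i^s\big((\Sym^m C\setminus i_{m,n}^{-1}(W))\times\Delta^s\big)$ exactly in the faces of $(\Sym^m C\setminus i_{m,n}^{-1}(W))\times\Delta^s$. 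Consequently, if $Z$ meets all faces properly then so does $i^s_*Z$, i.e. $i^s_*$ preserves admissible cycles, and, exactly as in Lemma \ref{lemma1}, the commuting squares force $i^s_*$ to commute with the pull-backs $\partial_j^*$ along the faces, hence with the differential $\sum_j(-1)^j\partial_j^*$ of Bloch's cycle complex. Passing to homology of these complexes then yields the desired homomorphism $\CH^*(\Sym^m C\setminus i_{m,n}^{-1}(W),s)\to\CH^*(\Sym^n C\setminus W,s)$.

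The one point that needs genuine care — and the step I would flag as the main (minor) obstacle — is checking that Bloch's admissibility condition, proper intersection with every face, survives both the restriction to the open locus and the push-forward. But because $i^s$ is a closed immersion this reduces, as noted above, to the remark that intersecting a face of the larger product with the closed subscheme $i^s(\cdots)$ recovers the corresponding face of the smaller product, so no component can jump in dimension; the rest is the bookkeeping of Lemma \ref{lemma1} carried out over the open subschemes rather than over $\Sym^m C$ and $\Sym^n C$.
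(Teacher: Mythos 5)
Your proposal is correct and follows exactly the route the paper itself takes: the paper's proof is the one-line remark that one argues as in Lemma \ref{lemma1} with $\Sym^m C$ and $\Sym^n C$ replaced by the open complements, and your write-up simply fills in the details of that substitution (the restricted map remaining a closed immersion, the commuting squares with the face maps $\partial_j$, and the preservation of admissibility). No discrepancy with the paper's argument.
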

\begin{proof}
Proof follows by arguing similarly as in lemma \ref{lemma1} with $\Sym^m C,\Sym^n C$ replaced by $\Sym^m C\setminus i^{-1}(W),\Sym^n C\setminus W$.
\end{proof}

\begin{lemma}
\label{lemma2}
Let $\rho$ be the inclusion from $\Sym^m C\setminus \Sym^{m-1}C=C_0(m)$ to $\Sym^m C$. Then the homomorphism $\rho^{s*}$ is well defined at the level of higher Chow groups.
\end{lemma}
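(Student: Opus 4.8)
The plan is to repeat the proof of Lemma \ref{lemma1} \emph{mutatis mutandis}, with the proper morphism $i_{m,n}^s$ and its push-forward replaced by the open immersion $\rho$ and the restriction of cycles to an open subscheme. Set $\rho^s:=\rho\times\id_{\Delta^s}\colon C_0(m)\times\Delta^s\hookrightarrow\Sym^m C\times\Delta^s$. Since $\rho^s$ is an open immersion, hence flat of relative dimension $0$, the assignment $Z\mapsto Z\cap(C_0(m)\times\Delta^s)$ defines a codimension-preserving homomorphism on the group of all cycles, with no moving lemma required; this is the candidate for $\rho^{s*}$.

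First I would check that $\rho^{s*}$ carries the group $\bcZ^*(\Sym^m C\times\Delta^s)$ of admissible cycles into $\bcZ^*(C_0(m)\times\Delta^s)$. If $Z$ is admissible, it meets every face $\Sym^m C\times\Delta^t$, $\Delta^t\subseteq\Delta^s$, properly; passing to the open subscheme $C_0(m)\times\Delta^s$ only deletes irreducible components and cannot raise dimension, so $\rho^{s*}Z$ still meets each $C_0(m)\times\Delta^t$ in the expected codimension and is therefore admissible. Thus $\rho^{s*}\colon\bcZ^*(\Sym^m C\times\Delta^s)\to\bcZ^*(C_0(m)\times\Delta^s)$ is well defined for each $s$.

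Next I would verify compatibility with the face maps. For every $i$ the square
$$
  \diagram
   C_0(m)\times \Delta^{s-1}\ar[dd]_-{\rho^{s-1}} \ar[rr]^-{\partial_i} & & C_0(m)\times \Delta^{s} \ar[dd]^-{\rho^{s}} \\ \\
  \Sym^m C\times \Delta^{s-1}\ar[rr]^-{\partial_i} & & \Sym^m C\times \Delta^{s}
  \enddiagram
$$
is Cartesian, and restricting an admissible cycle to the open subscheme commutes with restricting it to a face — both composites equal the proper intersection of that cycle with $C_0(m)\times\Delta^{s-1}$ inside $\Sym^m C\times\Delta^s$. This yields $\rho^{(s-1)*}\circ\partial_i^*=\partial_i^*\circ\rho^{s*}$ on admissible cycles, the exact analogue of the commutative diagram appearing in the proof of Lemma \ref{lemma1}. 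Consequently $\rho^{s*}$ commutes with the differential $\sum_i(-1)^i\partial_i^*$ of Bloch's cycle complex \cite{Bloch}, and therefore descends to a homomorphism $\CH^*(\Sym^m C,s)\to\CH^*(C_0(m),s)$ on the higher Chow groups.

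I do not expect a genuine obstacle. Because $\rho$ is an open immersion, pull-back along it is literally restriction of cycles, so both the preservation of admissibility and the commutation with the face restrictions are immediate; the flatness of $\rho$ is precisely what lets the bookkeeping of Lemma \ref{lemma1} run with push-forward replaced by pull-back. The identical argument, with $\Sym^m C$ replaced throughout by the complement of an arbitrary closed subscheme $W$ of $\Sym^n C$, gives the open-subscheme version of the statement used in the localization arguments of the earlier sections.
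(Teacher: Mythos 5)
Your proposal is correct and follows essentially the same route as the paper: both reduce the claim to the compatibility of the open restriction $\rho^{s*}$ with the face maps $\partial_i^*$ via the commutative (Cartesian) square relating $C_0(m)\times\Delta^{\bullet}$ and $\Sym^m C\times\Delta^{\bullet}$, so that $\rho^{s*}$ is a map of Bloch's cycle complexes and descends to higher Chow groups. You additionally spell out why restriction to an open subscheme preserves admissibility, a point the paper's proof leaves implicit; this is a welcome detail but not a different argument.
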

\begin{proof}
To prove this consider the diagram at the level of schemes.
$$
  \diagram
   C_0(m)\times \Delta^{s-1}\ar[dd]_-{\partial_i} \ar[rr]^-{\rho^{s-1}} & & \Sym^{m}C\times \Delta^{s-1} \ar[dd]^-{\partial_i} \\ \\
   C_0(m)\times \Delta^{s}\ar[rr]^-{\rho^s} & & \Sym^m C\times \Delta^s
  \enddiagram
  $$
That gives the following commutative diagram at the level of $\bcZ^*$.
$$
  \diagram
   \bcZ^*(\Sym^m C\times \Delta^{s})\ar[dd]_-{\partial_i^*} \ar[rr]^-{\rho^{s*}} & & \bcZ^*(C_0(m)\times \Delta^{s-1}) \ar[dd]^-{\partial_i^*} \\ \\
   \bcZ^*(\Sym^m C\times \Delta^{s-1})\ar[rr]^-{\rho^{s-1*}} & & \bcZ^*(C_0(m)\times \Delta^{s-1})
  \enddiagram
  $$
Therefore we have $\rho^{s*}$ is well defined at the level of higher Chow groups.
\end{proof}

\begin{corollary}
Let $W$ be a closed subscheme in $\Sym^m C$. Denote the complement of $\Sym^{m-1} C\setminus i_{m,n}^{-1}W$ in $\Sym^m C\setminus W$ as $W_0(m)$. Let $\rho$ be the inclusion of $W_0(m)$ into $\Sym^m C\setminus W$. Then the homomorphism $\rho^{s*}$ is well defined from $\CH^*(\Sym^m C\setminus W,s)$ to $\CH^*(W_0(m),s)$.
\end{corollary}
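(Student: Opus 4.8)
The plan is to run exactly the argument of Lemma~\ref{lemma2}, now over the open variety in place of $\Sym^m C$. Write $X := \Sym^m C\setminus W$ and let $Z := \Sym^{m-1}C\cap X = \Sym^{m-1}C\setminus i_{m,n}^{-1}W$, a closed subscheme of $X$, so that $W_0(m) = X\setminus Z$ and $\rho\colon W_0(m)\hookrightarrow X$ is an open immersion. Being an open immersion, $\rho$ is flat, hence induces flat pullbacks $\rho^{s*}\colon\bcZ^*(X\times\Delta^s)\to\bcZ^*(W_0(m)\times\Delta^s)$ on the cycle groups. The one point to observe is that these restrictions carry admissible cycles to admissible cycles: if $V\subset X\times\Delta^s$ meets every face $X\times F$ in the correct codimension, then $V\cap(W_0(m)\times\Delta^s)$ still meets $W_0(m)\times F$ properly, since passing to an open subset cannot decrease the codimension of an intersection. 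Thus $\rho^{s*}$ is defined on Bloch's complex.

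It remains to check that $\rho^{s*}$ commutes with the simplicial boundary maps. For each face map $\partial_i\colon\Delta^{s-1}\to\Delta^s$ the square
$$
  \diagram
   W_0(m)\times \Delta^{s-1}\ar[dd]_-{\partial_i} \ar[rr]^-{\rho^{s-1}} & & X\times \Delta^{s-1} \ar[dd]^-{\partial_i} \\ \\
   W_0(m)\times \Delta^{s}\ar[rr]^-{\rho^{s}} & & X\times \Delta^s
  \enddiagram
$$
is Cartesian, and applying flat pullback gives the commutative square
$$
  \diagram
   \bcZ^*(X\times \Delta^{s})\ar[dd]_-{\partial_i^*} \ar[rr]^-{\rho^{s*}} & & \bcZ^*(W_0(m)\times \Delta^{s}) \ar[dd]^-{\partial_i^*} \\ \\
   \bcZ^*(X\times \Delta^{s-1})\ar[rr]^-{\rho^{s-1*}} & & \bcZ^*(W_0(m)\times \Delta^{s-1}).
  \enddiagram
$$
Therefore $\rho^{s*}$ is a morphism of the complexes computing the higher Chow groups of $\Sym^m C\setminus W$ and of $W_0(m)$, and hence induces a well-defined homomorphism $\rho^{s*}\colon\CH^*(\Sym^m C\setminus W,s)\to\CH^*(W_0(m),s)$, as claimed.

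As with Lemma~\ref{lemma2}, the argument is purely formal and I do not anticipate a genuine obstacle; the only delicate point — and it is a minor one — is the preservation of admissibility under restriction to the open subscheme $W_0(m)\times\Delta^s$, which is already built into the construction of flat pullback on Bloch's cycle complex in \cite{Bloch} and so is available here for free.
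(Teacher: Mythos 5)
Your proposal is correct and follows essentially the same route as the paper, which simply repeats the argument of Lemma \ref{lemma2} with $\Sym^m C$ and $C_0(m)$ replaced by $\Sym^m C\setminus W$ and $W_0(m)$. The extra care you take over preservation of admissibility under restriction to the open subset is a worthwhile detail that the paper leaves implicit.
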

\begin{proof}
Proof follows by arguing as in lemma \ref{lemma2} with $C_0(m),\Sym^m C$ replaced by $W_0(m),\Sym^m C\setminus W$.
\end{proof}

\begin{proposition}
\label{prop4}
The push-forward homomorphism $i^s_*$ from $\CH^*(\Sym^m X,s)$ to $\CH^*(\Sym^n X,s)$ is injective.
\end{proposition}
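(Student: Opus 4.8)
The plan is to adapt Collino's argument to Bloch's cycle complex and to run an induction on $m$, with $n$ held fixed. All the geometric input is already in hand: the correspondence $\Gamma^s$ of Lemma \ref{lemma3}, Proposition \ref{prop3} (which at the level of admissible cycles shows that $g^s_*\circ i^s_*(Z)=Z+Z_1$ with $Z_1$ supported on $\Sym^{m-1}C\times\Delta^s$), and Bloch's localization exact sequence \cite{Bloch} for the closed pair $\Sym^{m-1}C\subset\Sym^m C$ with open complement $C_0(m)$. The first step is to check, by the face-compatibility argument used in Lemmas \ref{lemma1} and \ref{lemma2}, that $\Gamma^s$ induces a well-defined homomorphism $g^s_*\colon\CH^*(\Sym^n C,s)\to\CH^*(\Sym^m C,s)$ on higher Chow groups. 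Since $\rho^{s*}$ annihilates the image of the pushforward from the closed subvariety $\Sym^{m-1}C$ (exactness of localization at $\CH^*(\Sym^m C,s)$), the cycle-level computation of Proposition \ref{prop3} then descends to an identity of operators on $\CH^*(\Sym^m C,s)$, namely $\rho^{s*}\circ g^s_*\circ i^s_*=\rho^{s*}$.

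Granting this, the proof is a short diagram chase. For $m=0$, $\Sym^0 C$ is a single closed point and $i^s_*$ is the pushforward into the connected projective variety $\Sym^n C$; here the relevant source groups vanish except for $\ZZ$ at the bottom, generated by the class of a point, which has degree $1$, so $i^s_*$ is injective. Assume now that $i^s_{m-1,n*}$ is injective, and let $z\in\CH^*(\Sym^m C,s)$ satisfy $i^s_*(z)=0$. Applying $g^s_*$ gives $g^s_* i^s_*(z)=0$, and hence, by the operator identity above, $\rho^{s*}(z)=\rho^{s*}(g^s_* i^s_*(z))=0$. Exactness of the localization sequence at $\CH^*(\Sym^m C,s)$ then lets us write $z=j'_*(z')$ for some $z'\in\CH^*(\Sym^{m-1}C,s)$, where $j'\colon\Sym^{m-1}C\hookrightarrow\Sym^m C$ is the base-point inclusion $D\mapsto D+p$. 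Since $i_{m,n}\circ j'=i_{m-1,n}$, we get $i^s_{m-1,n*}(z')=i^s_*(z)=0$, so $z'=0$ by the inductive hypothesis and therefore $z=j'_*(z')=0$. This proves injectivity of $i^s_*$ for all $m\le n$.

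The one point that is not formal, and where the real work sits, is the first step: that the correspondence $\Gamma^s$ genuinely operates on Bloch's higher Chow groups---not merely on the cycle complexes---and that the computations of Lemma \ref{lemma3} and Proposition \ref{prop3} are compatible with the differentials of $\bcZ^*(\Sym^m C\times\Delta^\bullet)$. This rests on the moving lemma for higher Chow groups (Bloch, Levine): a class must be represented by an admissible cycle in position general enough that its intersection with $\Gamma^s$, together with the restrictions of that intersection to all faces, is proper, after which one checks that the induced map on homology is independent of these choices. The compatibilities isolated in Lemmas \ref{lemma1}--\ref{lemma3} and their corollaries are precisely the bookkeeping this requires; once it is in place, everything else is the chase above, and the same considerations apply verbatim to the open version over $\Sym^m C\setminus i^{-1}(W)$ used for Corollary \ref{openCollino}.
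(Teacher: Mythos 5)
Your argument is correct and follows essentially the same route as the paper: induction on $m$, the operator identity $\rho^{s*}\circ g^s_*\circ i^s_*=\rho^{s*}$ from Proposition \ref{prop3}, and a chase through the localization sequence for $\Sym^{m-1}C\subset\Sym^m C$. If anything you are slightly more careful than the paper in flagging that $\Gamma^s$ must be shown (via the moving lemma) to act on higher Chow classes and not merely on admissible cycles, a point the paper handles only implicitly by working with a chosen representative $Z$.
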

\begin{proof}
We prove this by induction. First $\Sym^0 C$ is a single point and the morphism $i^s_{0,n}=(p,\cdots,p)$, so the push-forward induced by this morphism is injective. Assume now that $i^s_*$ is injective for $m-1$ and any $n$ greater than or equal to $m-1$. Then consider the following commutative diagram

$$
  \xymatrix{
  0 \ar[r]^-{}  & \CH^*(\Sym^{m-1} C,s) \ar[r]^-{i^s_{m-1,m*}} \ar[dd]_-{}
  &   \CH^*(\Sym^m C,s) \ar[r]^-{\rho^{s*}} \ar[dd]_-{i^s_{mn*}}
  & \CH^*(C_0(m),s)  \ar[dd]_-{}  \
  \\ \\
  0 \ar[r]^-{} & \CH^*(\Sym^{m-1} C,s) \ar[r]^-{i^s_{m-1,n*}}
    & \CH^*(\Sym^n C,s) \ar[r]^-{}
  & \CH^*({(\Sym^{m-1}C)^c,s})
  }
$$
In the above $(\Sym^{m-1}C)^c$ is the complement of $\Sym^{m-1}C$ in $\Sym^n C$.
In this diagram the left part of the two rows are exact by the induction hypothesis and the middle part is exact by the localization exact sequence for higher Chow groups.
Now suppose that $z$ belongs to $\CH^*(\Sym^m C,s)$, such that $$i^s_{m,n*}(z)=0$$
and let $Z$ be the cycle such that the cycle class of $Z$ is $z$. Let $cl(Z)$ denote the cycle class in the Higher Chow group, corresponding to the algebraic cycle $Z$.

Then we have
$$cl(\rho^{s*}g^s_*i^s_*(Z))=0$$
which means by the proposition \ref{prop3}
$$cl(\rho^{s*}(Z))=0\;,$$
hence
$$\rho^{s*}(cl(Z))=\rho^{s*}(z)=0\;.$$
$$$$
So by the localization exact sequence there exists $z'$ in $\CH^*(\Sym^{m-1}C,s)$, such that
$$z=i^s_{m-1,m*}(z')\;.$$
By the commutativity of the left square of the above commutative diagram we get that
$$i^s_{m-1,n*}(z')=0\;.$$
By the injectivity of $i^s_{m-1,n*}$ we get that $z'=0$, so $z=0$, hence $i^s_{m,n*}$ is injective.
\end{proof}

\begin{corollary}\label{openCollino}
Let $W$ be a closed subscheme inside $\Sym^n C$. Consider the embedding $i_{m,n}$ from $\Sym^m C\setminus i_{m,n}^{-1}(W)$ to $\Sym^n C\setminus W$. Then the homomorphism $i_{m,n*}^s$ from $\CH_*(\Sym^m C\setminus i_{m,n}^{-1}(W),s)$ to $\CH_*(\Sym^n C\setminus W,s)$ is injective.
\end{corollary}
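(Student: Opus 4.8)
The plan is to copy, essentially line for line, the inductive argument in the proof of Proposition \ref{prop4}, replacing the closed symmetric powers by their open counterparts and Proposition \ref{prop3} by Corollary \ref{corollary1}. Fix $m\le n$ and put $U_k:=\Sym^k C\setminus i_{k,n}^{-1}(W)$ for $0\le k\le n$, so that $U_n=\Sym^n C\setminus W$ and the map to be studied is $i_{m,n}\colon U_m\to U_n$. Since $i_{m-1,n}=i_{m,n}\circ i_{m-1,m}$, one has $i_{m-1,n}^{-1}(W)=i_{m-1,m}^{-1}\bigl(i_{m,n}^{-1}(W)\bigr)$, hence $U_{m-1}=\Sym^{m-1}C\cap U_m$ is closed in $U_m$, with open complement $W_0(m):=U_m\setminus U_{m-1}$, and $i_{m-1,n}$ restricts to a morphism $U_{m-1}\to U_n$. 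The localization sequence of Bloch's higher Chow groups for the closed embedding $U_{m-1}\hookrightarrow U_m$ then assembles, together with the pushforwards $i^s_{m-1,m*}$, $i^s_{m,n*}$ and the restriction $\rho^{s\prime*}$ to $W_0(m)$, into the same commutative configuration used in Proposition \ref{prop4}; that $i^s_{m-1,m*}$, $i^s_{m,n*}$, $\rho^{s\prime*}$ and $g^{s\prime}_*$ descend to higher Chow groups (rather than only acting on admissible cycles) is exactly what the corollaries following Lemmas \ref{lemma1} and \ref{lemma2} are for.

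Granting this, the induction on $m$ runs verbatim. Assume the result for $m-1$, and take $z\in\CH_*(U_m,s)$ with $i^s_{m,n*}(z)=0$. Applying $\rho^{s\prime*}\circ g^{s\prime}_*$ and invoking Corollary \ref{corollary1}, now read as an identity of homomorphisms on higher Chow groups, gives $\rho^{s\prime*}(z)=\rho^{s\prime*}\circ g^{s\prime}_*\circ i^s_{m,n*}(z)=0$. By exactness of the localization sequence for $U_{m-1}\hookrightarrow U_m$, we get $z=i^s_{m-1,m*}(z')$ for some $z'\in\CH_*(U_{m-1},s)$; then $i^s_{m-1,n*}(z')=i^s_{m,n*}\bigl(i^s_{m-1,m*}(z')\bigr)=i^s_{m,n*}(z)=0$, so $z'=0$ by the inductive hypothesis and therefore $z=0$.

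Two points will require genuine care. The first is upgrading Corollary \ref{corollary1} from the level of algebraic cycles to the level of higher Chow groups: one must verify that the correspondence identity $(i^s_{m,n}\times\mathrm{id})^*\Gamma^{s\prime}=\Delta+Y$, with $Y$ supported on the lower stratum, is compatible with the face maps $\partial_i$, so that both $g^{s\prime}_*$ and the relation $\rho^{s\prime*}\circ g^{s\prime}_*\circ i^s_{m,n*}=\rho^{s\prime*}$ pass to cohomology of the relevant complexes; this should follow from the same diagram-chasing already done for Lemmas \ref{lemma1}, \ref{lemma2} and Corollary \ref{corollary1}. The second, which I expect to be the real obstacle, is the base case $m=0$: here $U_0=\Sym^0 C\setminus i_{0,n}^{-1}(W)$ is either empty, in which case there is nothing to prove, or a single reduced point, and then one needs the class of that point to remain nonzero in $\CH_*(\Sym^n C\setminus W,s)$. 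I would treat this via the localization sequence for $W\subset\Sym^n C$, comparing it with the split injection $\CH_*(\mathrm{pt},s)\hookrightarrow\CH_*(\Sym^n C,s)$ coming from properness of $\Sym^n C$, and using the projective-bundle description of $\Sym^n C$ over $J(C)$ (for $n$ large) to control the image of $\CH_*(W,s)$ in $\CH_*(\Sym^n C,s)$ — this is the delicate step on which the whole corollary ultimately rests.
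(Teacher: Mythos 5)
Your proposal is correct and is essentially the paper's own proof, which consists precisely of rerunning the induction of Proposition \ref{prop4} on the open strata $\Sym^{k}C\setminus i_{k,n}^{-1}(W)$ with Corollary \ref{corollary1} in place of Proposition \ref{prop3}. The base case you single out as the real obstacle needs no properness or projective-bundle input: for $m=0$ the lower stratum is empty, so $\rho^{s\prime}$ is the identity map and Corollary \ref{corollary1} already reads $g^{s\prime}_{*}\circ i^{s}_{0,n*}=\id$, which gives injectivity of the base case directly.
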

\begin{proof}
The proof follows by arguing as in proposition \ref{prop4} with $\Sym^m C,\Sym^n C$ replaced by $\Sym^m C\setminus i_{m,n}^{-1}(W), \Sym^n C\setminus W$ respectively and by corollary \ref{corollary1}.
\end{proof}


\end{document}